\documentclass[12pt,a4paper]{article}
\textwidth 17.cm 
\textheight 21.8cm 
\addtolength{\oddsidemargin}{-2.3cm} 
\addtolength{\evensidemargin}{-2.3cm} 
\addtolength{\topmargin}{-1.5cm} 

\usepackage[english]{babel}
\usepackage[T1]{fontenc}
\usepackage{graphicx}
\usepackage{epstopdf}
\usepackage{epsfig,psfrag}
\usepackage{amsmath, amsfonts, amsthm,amssymb}
\usepackage{hyperref}
\newtheorem{definition}{Definition}[section]
\newtheorem{lemma}{Lemma}[section]

\newtheorem{theorem}{Theorem}[section]
\newtheorem{corollary}{Corollary}[section]
\newtheorem{remark}{Remark}[section]

\newtheorem{proposition}{Proposition}[section]

\numberwithin{equation}{section}
\def\Z{\mathbb Z}
\def\R{\mathbb R}
\def\Q{\mathbb Q}
\def\T{\mathbb T}
\def\RR{\mathcal R}
\def\SS{\mathcal S}
\def\FF{\mathcal F}
\def\DD{\mathcal D}
\def\XX{\mathcal X}
\def\OO{\mathcal O}

\title{Growth of Sobolev norms for the quintic NLS on $\T^2$}
\author{E. Haus*, M. Procesi*
\vspace{2mm} 
\\ \small 
$^*$ Dipartimento di Matematica, Universit\`a di Roma ``La Sapienza", Roma, I-00185, Italy
\\ \small 
E-mail:  \texttt{haus@mat.uniroma1.it}, \texttt{mprocesi@mat.uniroma1.it}}

\begin{document}
\maketitle
\abstract{We study the quintic Non Linear Schr\"odinger equation on a two dimensional torus and exhibit orbits whose Sobolev norms grow with time. The main point is to reduce to a sufficiently simple toy model, similar in many ways to the one discussed in \cite{CKSTT} for the case of the cubic NLS. This requires an accurate combinatorial analysis. }

\section{Introduction}

We consider the quintic defocusing NLS on the two-dimensional torus $\mathbb{T}^2=\mathbb R^2/(2\pi\mathbb Z)^2$
\begin{equation}\label{nls}
-i\partial_tu+\Delta u=|u|^4u\ ,
\end{equation}
which is an infinite dimensional dynamical system with Hamiltonian
\begin{equation}
H=\int_{\mathbb{T}^2}|\nabla u|^2+\frac{1}{3}\int_{\mathbb{T}^2}|u|^6
\end{equation}
having the mass (the $L^2$ norm) and the momentum
\begin{equation}
L=\int_{\mathbb{T}^2}|u|^2\ , \quad M= \int_{\mathbb{T}^2} \Im (u\cdot \nabla u) 
\end{equation}
as constants of motion.
The well-posedness result of \cite{Bou93,BGT} for data $u_0 \in  H^s(\T^2)$, $s\geq1$ gives the existence of a global-in-time smooth solution to \eqref{nls} from smooth initial data and one would like to understand some qualitative properies of solutions.

A fruitful approach to this question is to apply the powerful tools of {\em singular perturbation theory}, such as KAM theory, Birkhoff Normal Form, Arnold diffusion, first developed in order to study finite-dimensional systems. 
 
We are interested in the phenomenon of the growth of Sobolev norms, i.e. we look for solutions 
which initially oscillate only on scales comparable to the spatial period and eventually oscillate on arbitrarily short spatial scales. This is a natural extension of the results in \cite{CKSTT} and \cite{GuaKal} which prove similar results for the cubic NLS. In the strategy of the proof, we follow \cite{CKSTT} as closely as possible; therefore our main result is the precise analogue of the one stated in \cite{CKSTT} for the cubic NLS. Namely, we prove 
\begin{theorem}\label{main.thm}
Let $s>1$, $K\gg 1$ and $ 0<\delta\ll 1$ be given parameters.Then there exists a global smooth solution $u(t,x)$ to \eqref{nls} and a time $T > 0$  with
$$ 
\| u(0)\|_{H^s(\T^2)}\leq \delta\qquad \text{and}\qquad  \| u(T)\|_{H^s(\T^2)}\geq K\,.
$$
\end{theorem}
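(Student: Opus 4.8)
\medskip
\noindent\textbf{Proof strategy.}
The plan is to follow \cite{CKSTT} as closely as possible, the genuinely new ingredient being a more delicate combinatorial analysis of the resonant interactions. First I would remove the linear flow by writing $u(t,x)=\sum_{n\in\Z^2}a_n(t)\,e^{i(n\cdot x+|n|^2 t)}$, which turns \eqref{nls} into the non-truncated FNLS system
\begin{equation}\label{prop.fnls}
-i\dot a_n=\sum_{n_1+n_2+n_3=n+n_4+n_5} a_{n_1}a_{n_2}a_{n_3}\bar a_{n_4}\bar a_{n_5}\,e^{i\omega_n t},
\end{equation}
with $\omega_n=|n_1|^2+|n_2|^2+|n_3|^2-|n_4|^2-|n_5|^2-|n|^2$, exactly equivalent to \eqref{nls}. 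On the long but finite time scales we will use, the dynamics of \eqref{prop.fnls} is driven by its \emph{resonant} truncation $-i\dot a_n=(\RR a)_n$, obtained by discarding every quintuple with $\omega_n\neq0$; at the end a Gronwall/energy estimate — using non-stationary phase in the discarded terms together with the smallness of the amplitudes — will show that a genuine solution of \eqref{prop.fnls} shadows a solution of this resonant model on the relevant interval. The resonant model has the amplitude scaling $a(t)\mapsto\varepsilon\,a(\varepsilon^4 t)$, which leaves all Sobolev norm ratios invariant; this is how the data will be made small. The conserved mass $L$ becomes $\sum_n|a_n|^2$, which will constrain the reduced dynamics below.

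The heart of the matter is the construction of a finite set $\Lambda\subset\Z^2$ adapted to $\RR$. I would look for $\Lambda=\Lambda_1\sqcup\cdots\sqcup\Lambda_N$ with $|\Lambda_j|=2^{N-1}$ such that: (i) (\emph{closure}) every resonant quintuple with four of its frequencies in $\Lambda$ has its fifth in $\Lambda$ as well; (ii) the only non-trivial resonant interactions internal to $\Lambda$ are ``nuclear families'' that transport mass between two consecutive generations $\Lambda_j,\Lambda_{j+1}$ — one natural possibility being a cubic-type rectangle straddling $\Lambda_j$ and $\Lambda_{j+1}$ with the two remaining slots occupied by a ``spectator pair'' lying in a single generation; (iii) (\emph{no self-interaction}) no frequency of $\Lambda$ is the output of a resonant interaction internal to one generation or between non-adjacent generations; (iv) (\emph{explosion}) $\Lambda_1$ lies essentially on the unit circle while $\Lambda_N$ is pushed far out, so that moving the mass from $\Lambda_1$ to $\Lambda_N$ multiplies $\|\cdot\|_{\dot H^s}$ by a factor that diverges as $N\to\infty$. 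For the cubic NLS, (ii)--(iii) are the nuclear-family and non-degeneracy analysis of \cite{CKSTT}; for the quintic the Diophantine system $n_1+n_2+n_3=n+n_4+n_5$, $|n_1|^2+|n_2|^2+|n_3|^2=|n|^2+|n_4|^2+|n_5|^2$ has many more solution types, and proving that a \emph{generic} choice of the finitely many parameters generating $\Lambda$ destroys every unwanted configuration while preserving the nuclear families is the step I expect to be by far the hardest — this is precisely the ``accurate combinatorial analysis'' announced in the abstract.

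Granting such a $\Lambda$, I would restrict $\RR$ to functions supported on $\Lambda$ and constant on each generation, $a_n(t)=b_j(t)$ for $n\in\Lambda_j$. Properties (ii)--(iii) make this ansatz invariant and collapse the resonant model to a finite autonomous ODE system for $(b_1,\dots,b_N)$ — the \emph{toy model} — structurally identical to the one in \cite{CKSTT}. In it, each ``bubble'', on which $|b_j|=1$ and all other coordinates vanish, is an invariant periodic orbit lying on the mass sphere, hyperbolic within the toy model, and the unstable manifold of the $j$-th bubble intersects the stable manifold of the $(j+1)$-st. Concatenating the resulting connecting orbits and invoking a compactness/continuity argument as in \cite{CKSTT} yields, for any prescribed $\eta>0$, a solution $b(t)$ and a finite time $T$ with $|b_3(0)|>1-\eta$ and $|b_{N-2}(T)|>1-\eta$ (the shift by two generations at each end prevents the boundary generations from interfering). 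Hence $\|a(T)\|_{\dot H^s}^2/\|a(0)\|_{\dot H^s}^2$ is bounded below by a quantity $\mathcal K=\mathcal K(N,s)$ that can be made as large as we please by taking $N$ large.

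Finally I would promote $b$ to a solution of \eqref{nls}. Rescale $b\mapsto\varepsilon\,b(\varepsilon^4 t)$ to make the amplitudes — hence the error terms below — as small as needed without changing $\mathcal K$; take $a_n(0)=\varepsilon\,b_j(0)$ for $n\in\Lambda_j$ and $a_n(0)=0$ otherwise as data for the full FNLS \eqref{prop.fnls}, and run the three-step comparison (full FNLS, then its resonant truncation, then the toy model) on the finite time interval, each step controlled by non-stationary phase and by $\varepsilon$. This produces a solution $u$ of \eqref{nls} whose $H^s$ norm grows over $[0,T]$ by a factor $\gtrsim\mathcal K^{1/2}$. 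Choosing first $N=N(s,K,\delta)$ so that $\mathcal K^{1/2}\gtrsim K/\delta$, and then $\varepsilon$ so that $\|u(0)\|_{H^s}=\|a(0)\|_{H^s}\le\delta$, we obtain $\|u(0)\|_{H^s}\le\delta$ and $\|u(T)\|_{H^s}\ge K$, which is Theorem~\ref{main.thm}.
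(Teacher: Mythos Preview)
Your strategy is correct and is exactly the paper's approach. Two places where the sketch will need refinement once you carry it out:

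First, the resonance classification in your item~(ii) is incomplete. In a generic generation set the surviving non-trivial sextuples are not only ``rectangle plus a spectator'': the spectator is a \emph{single} mode $k$ repeated on both sides of the resonance and may lie anywhere in $\Lambda$, not just in one generation; and, more importantly, one cannot avoid the ``type~CF'' sextuples (Definition~\ref{def.CF}) built from two overlapping family rectangles, since these are forced by the family relations you are imposing. They add an extra term to the toy Hamiltonian coupling three consecutive generations (the last line of~\eqref{hamish}, giving the $b_{k-1}^2\bar b_{k+1}^2$ terms in~\eqref{hacca}). The toy model is therefore not structurally identical to the cubic one --- it is degree six, the Lyapunov exponent depends on $n$, and the remainder in~\eqref{brut} is not homogeneous --- but after the symplectic reduction the equations still have the form~\eqref{brut} and the \cite{CKSTT} slider argument goes through.

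Second, in your last paragraph a single scaling parameter $\varepsilon$ cannot simultaneously (a)~be small enough for the approximation lemma and (b)~place $\|u(0)\|_{H^s}$ near $\delta$ rather than far below it; if the data starts much smaller than $\delta$, the growth factor $K/\delta$ only carries it to something below $K$. The paper keeps a second free parameter, the dilation scale $\mathcal R$ of the frequency set (Proposition~\ref{prop.gen.set}): one first takes $\lambda=\varepsilon^{-1}$ large enough for Lemma~\ref{lemma.approx1}, and then adjusts $\mathcal R$ so that~\eqref{tagliadelta} holds; a final joint rescaling of $(\lambda,\mathcal R)$ closes~\eqref{fottiti}. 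With your $\Lambda_1$ pinned ``essentially on the unit circle'' this freedom is lost.
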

Note that we are making no claim regarding the time $T$ over which the growth of Sobolev norms occurs, this is the main difference between the approaches of \cite{CKSTT} and \cite{GuaKal}. 

\subsection{Some literature}

The growth of Sobolev norms for solutions of the Non Linear Schr\"odinger equation has been studied widely in the literature, but most of the results regard upper bounds on such growth. In the one dimensional case with an analytic non-linearity
$ \partial_{\bar u} P(|u|^2)$ Bourgain \cite{Bou96} and Staffilani \cite{Sta97} proved at most polynomial growth of Sobolev norms. In the same context Bourgain \cite{Bou00} proved a Nekhoroshev  type theorem for a perturbation of the cubic NLS. Namely, for $s$ large and a typical initial datum $u(0) \in  H^s(T)$ of small size $ \|u(0)\|_s \leq \varepsilon$  he proved
$$\sup_{t\leq T}   \|u(t)\|_s\leq C\varepsilon,\quad |t|<T\,,\quad T\leq \varepsilon^{-A}$$
 with $A = A(s) \to 0$ as $s \to \infty$. 
 Similar upper bounds on the growth have been obtained also for the NLS equation on $\R$ and $\R^2$ as well as on compact manifolds. 

We finally mention the paper \cite{FGL} which discusses the existence of stability regions for the NLS on tori.

Concerning instability results for  the NLS on tori, we mention  the papers by Kuksin 
\cite{Kuk97b} (see related works \cite{Kuk95, Kuk96, Kuk97a, Kuk99}) who studied the growth of Sobolev norms for  the equation
$$
-i\partial_tu+\delta\Delta u=|u|^{2p}u\,,\quad p\in\mathbb N 
$$
and constructed solutions whose Sobolev norms grow by an inverse power of $\delta$. 
Note that  the solutions that he obtains (for $p=2$) correspond to orbits
of equation \eqref{nls} with large initial data.  
A big progress appeared in the paper \cite{CKSTT} where the authors prove Theorem \ref{main.thm} for cubic NLS. Note that the  initial data are {\em small} in $H^s$. Finally the paper \cite{GuaKal} follows the same general strategy of \cite{CKSTT} and constructs orbits whose Sobolev norm grows (by an arbitrary factor) in a time which is polynomial in the growth factor. 
This is done by a careful analysis of the equation and using in a clever way various tools from diffusion in finite dimensional systems.

Note that these results do not imply the existence of solutions with diverging Sobolev norm, nor do they claim that the unstable behavior is typical. Recently, Hani \cite{Hani} has achieved a remarkable progress towards the existence of unbounded Sobolev orbits: for a class of cubic NLS equations with non-polynomial nonlinearity, the combination of a result like Theorem \ref{main.thm} with some clever topological arguments leads to the existence of solutions with diverging Sobolev norm.

Regarding growth of Sobolev norms for other equations we mention the following papers:
\cite{Bou96}-- for the wave equation with a cubic nonlinearity but with a spectrally defined Laplacian,
  \cite{GG10, Poc11}--for the Szeg\"o equation, and \cite{Poc13}--for certain nonlinear wave equations.
  We also mention the long time stability results obtained in  \cite{Bam97, Bam99, Bam03, BG06, GIP09, GKP09, Wang,Wang3}.
  
   A dual point of view to instability is to construct quasi-periodic orbits. These are non-generic solutions which are global in time and whose Sobolev norms are approximately constant. Among the relevant literature we mention \cite{W1,CW,Po2,KP,Bou98,BB1,EK,GYX,BBi10,Wang2,PX,BCP}.
  Of particular interest  are the recent results obtained through KAM theory which gives information on linear stability close to the quasi-periodic solutions.
  In  particular the paper \cite{PP13} proves the existence of  both stable and unstable tori (of arbitrary finite dimension) for the cubic NLS. 
  
  In finite dimensional systems diffusive orbits are usually constructed by proving that the stable and unstable manifolds of a {\em chain of unstable tori} intersect. Usually this is done with tori of co-dimension one so that the manifolds should intersect for dimensional reasons. 
   Unfortunately in the infinite dimensional case one is not able to prove the existence of co-dimension one tori. 
  Actually the construction of almost-periodic orbits is an open problem except for very special cases such as integrable equations or equations with infinitely many external parameters (see for instance \cite{Po,ChP,Bou}). 
  
  In \cite{CKSTT}, \cite{GuaKal} (and the present paper) this problem is avoided  by taking advantage of the specific form of the equation. First one reduces to an approximate equation, i.e. the {\em first order Birkhoff normal form} see \eqref{normalform}. 
  Then for this dynamical system one proves directly the existence of 
    chains of one dimensional unstable tori (periodic orbits) together with their heteroclinic connections.   Next one proves the existence of a {\em slider solution} which shadows the heteroclinic chain in a finite time. Finally, one proves the persistence of the slider solution for the full NLS. In the next section we describe the strategy more in detail.

\subsection{Informal description of the results}
In order to understand the dynamics of \eqref{nls} it is convenient  to pass to a {\em moving frame}  Fourier series representation: $$u(t,x)=\sum_{j\in\mathbb{Z}^2}a_j(t)e^{ij\cdot x+ i |j|^2 t}\,,$$ 
%
so that the equations of motion become
\begin{equation}\label{NLS}
-i\dot a_j=\sum_{j_1,j_2,j_3,j_4,j_5\in\mathbb{Z}^2 \atop j_1+j_2+j_3-j_4-j_5=j}a_{j_1}a_{j_2}a_{j_3}\bar{a}_{j_4}\bar{a}_{j_5}e^{i\omega_6t}
\end{equation}
where $\omega_6 = |j_1|^2+|j_2|^2+|j_3|^2-|j_4|^2-|j_5|^2-|j|^2$.

We define the {\em resonant truncation} of \eqref{NLS} as
\begin{equation}\label{normalform}
-i\dot{\beta}_j=\sum_{\substack{j_1,j_2,j_3,j_4,j_5\in\mathbb{Z}^2\\j_1+j_2+j_3-j_4-j_5=j\\|j_1|^2+|j_2|^2+|j_3|^2-|j_4|^2-|j_5|^2=|j|^2}}\beta_{j_1}\beta_{j_2}\beta_{j_3}\bar{\beta}_{j_4}\bar{\beta}_{j_5}\ .
\end{equation}
It is well known that the dynamics of \eqref{NLS} is well approximated by the one  of \eqref{normalform} for finite but long times\footnote{Actually, passing to the resonant truncation is equivalent to performing the first step of a Birkhoff normal form. However, since we follow closely the proof in \cite{CKSTT}, we chose to use similar notation.}. Our aim is to first prove Theorem \ref{main.thm} for \eqref{normalform} and then extend the result to \eqref{NLS} by an approximation Lemma. The idea of the approximation Lemma roughly speaking is that, by integrating in time the l.h.s. of \eqref{NLS}, one sees that the non-resonant terms (i.e., with $\omega_6\neq0$) give a contribution of order $O(a^9)$. By scaling $a^{(\lambda)}(t)=\lambda^{-1}a(\lambda^{-4}t)$, we see that the non-resonant terms are an arbitrarily small perturbation w.r.t. the resonant terms appearing in \eqref{normalform} and hence they can be ignored for arbitrarily long finite times.

We now outline the strategy used in order to prove Theorem \ref{main.thm} for the equations \eqref{normalform}.

The equations \eqref{normalform} are Hamiltonian with respect to the Hamiltonian function
\begin{equation}\label{nonlin.ham}
\mathcal{H}=\frac{1}{3}\sum_{\substack{j_1,j_2,j_3,j_4,j_5,j_6\in\mathbb{Z}^2\\j_1+j_2+j_3=j_4+j_5+j_6\\|j_1|^2+|j_2|^2+|j_3|^2=|j_4|^2+|j_5|^2+|j_6|^2}}\beta_{j_1}\beta_{j_2}\beta_{j_3}\bar{\beta}_{j_4}\bar{\beta}_{j_5}\bar{\beta}_{j_6}
\end{equation}
and the symplectic form $\Omega=id\beta\wedge d\bar{\beta}$.

This is still a very complicated (infinite-dimensional) Hamiltonian system, but it has the advantage of having many invariant subspaces on which the dynamics simplifies significantly. Let us set up some notation.
\begin{definition}[Resonance]\label{resonance}
A sextuple $(k_1,k_2,k_3,k_4,k_5,k_6)\in(\Z^2)^6$, is a resonance if 
\begin{equation}\label{complete}
k_1+k_2+k_3-k_4-k_5-k_6=0\,,\qquad |k_1|^2+|k_2|^2+|k_3|^2-|k_4|^2-|k_5|^2-|k_6|^2=0\,.
\end{equation}
A resonance is {\em trivial} if it  is of the form $(k_1,k_2,k_3,k_1,k_2,k_3)$ up to permutations of the last three elements.
\end{definition}
\begin{definition}[Completeness]\label{completeness}
We say that a set $\SS\subset\Z^2$ is {\em complete} if the following holds:\\
for every quintuple $(k_1,k_2,k_3,k_4,k_5)\in\SS^5$, if there exists $k_6\in\Z^2$ s.t.
$(k_1,k_2,k_3,k_4,k_5,k_6)$ is a resonance, 
then $k_6\in\SS$.
\end{definition}
It is easily seen that, for any complete $\SS\subset\Z^2$, the subspace defined by requiring $\beta_k = 0$ for all $k\notin\mathcal S$ is invariant.
\begin{definition}[Action preserving]\label{integrable}
A complete set $\SS\subset\Z^2$ is said to be {\em action preserving} if all the resonances in $\SS$ are trivial.
\end{definition}
Remark that, for any complete and action preserving $\SS\subset\Z^2$, the Hamiltonian restricted to $\SS$ is given by (see \cite{ProcesiProcesi})
\begin{equation}
\mathcal{H}|_{\SS}=\frac{1}{3}\left(\sum_{j\in\SS}|\beta_j|^6+9\sum_{\substack{j,k\in\SS\\j\neq k}}|\beta_j|^4|\beta_k|^2+36\sum_{\substack{j,k,m\in\SS\\j\prec k\prec m}}|\beta_j|^2|\beta_k|^2|\beta_m|^2\right)
\end{equation}
where $\preceq$ is any fixed total ordering of $\mathbb{Z}^2$.

If $\SS$ is complete and action preserving, then $\mathcal{H}|_{\SS}$ is function of the actions $|\beta_j|^2$ only, with non-vanishing twist: therefore, the corresponding motion is periodic, quasi-periodic or almost-periodic, depending on the initial data. In particular, if $\beta_j(0)=\beta_k(0)$ for all $j,k\in\SS$, then the motion is periodic. Finally, since all the actions are constants of motion, then so are the $H^s$-norms of the solution.

On the other hand, it is easy to give examples of sets $\SS$ that are complete but not action preserving. For instance, one can consider complete sets of the form $\mathfrak S^{(1)}=\{k_1,k_2,k_3,k_4\}$, where the $k_j$'s are the vertices of a non-degenerate rectangle in $\Z^2$, or of the form $\mathfrak S^{(2)}=\{k_1,k_2,k_3,k_4,k_5,k_6\}$, where the $k_j\in\Z^2$ are all distinct and satisfy equations \eqref{complete}. Other examples are sets of the form $\mathfrak S^{(3)}=\{k_1,k_2,k_3,k_4\}$, with
\begin{equation}\label{SGreTho}
k_1+2k_2-2k_3-k_4=0\,,\qquad |k_1|^2+2|k_2|^2-2|k_3|^2-|k_4|^2=0
\end{equation}
studied in \cite{GreTho} or, more in general, the sets   $\mathfrak S^{(4)}=\cup_j\mathfrak S^{(3)}_j$ studied in  \cite{HauTho}\footnote{The papers \cite{GreTho,HauTho} actually consider the one-dimensional case, but of course the construction of complete sets can always be trivially extended to higher dimensions.}.  In all these cases, the variation of the $H^s$-norm of the solution is of order $O(1)$. Note that, while sets of the form $\mathfrak S^{(2)},\mathfrak S^{(3)},\mathfrak S^{(4)}$ exist in $\Z^d$ for all $d$, the non-degenerate rectangles $\mathfrak S^{(1)}$ exist only in dimension $d\geq2$. Let us briefly describe the dynamics on these sets.  By writing the Hamiltonian in symplectic polar coordinates $\beta_j=\sqrt{I_j}e^{i\theta_j}$, one sees that all these systems are integrable. However, their phase portraits are quite different. In $\mathfrak S^{(1)}$ one can exhibit two periodic orbits $\T_1,\T_2$ that are linked by a heteroclinic connection. $\T_1$ is supported on the modes $k_1,k_2$ and $\T_2$ on $k_3,k_4$. The $H^s$-norm of each periodic orbit is constant in time. By choosing $\mathfrak S^{(1)}$ appropriately one can ensure that these two values are different and this produces a growth of the Sobolev norms. Moreover, all the energy is transferred from $\T_1$ to $\T_2$. In the other cases, i.e. $\mathfrak S^{(2)},\mathfrak S^{(3)},\mathfrak S^{(4)}$, there is no orbit transferring all the energy from some modes to others (see Appendix \ref{disegni}).

These heteroclinic connections are the key to the energy transfer. In fact, assume that
$$
\SS_1:=\left\{\mathtt v_1,\ldots,\mathtt v_n\right\}\,,\qquad\SS_2:=\left\{\mathtt w_1,\ldots,\mathtt w_n\right\}
$$
with $n$ even, are two complete and action preserving sets. Assume moreover that, for all $1\leq j\leq n/2$, $\{\mathtt v_{2j-1},\mathtt v_{2j},\mathtt w_{2j-1},\mathtt w_{2j}\}$ are the vertices of a rectangle as in $\mathfrak S^{(1)}$. Finally, assume that $\SS_1\cup\SS_2$ is complete and contains no  non-trivial resonances except  those of the form $(k,\mathtt v_{2j-1},\mathtt v_{2j},k,\mathtt w_{2j-1},\mathtt w_{2j})$.
As in the case of $\mathfrak S^{(1)}$ the periodic orbits:
$$
\T_1:\quad \beta_{\mathtt v_j}(t)=b_1(t)\neq 0\,, \quad \beta_{\mathtt w_j}(t)=0 \,,\quad \forall j=1,\dots,n
$$
and 
$$
\T_2:  \quad \beta_{\mathtt w_j}(t)=b_2(t)\neq 0\,, \quad \beta_{\mathtt v_j}(t)=0 \,,\quad \forall j=1,\dots,n
$$
are linked by a heteroclinic connection.

We iterate this procedure constructing a {\em generation set} $\mathcal S=\cup_{i=1}^N \mathcal S_i$ where each $\SS_i$ is complete and action preserving. The corresponding periodic orbit $\T_i$ is linked by heteroclinic connections to $\T_{i-1}$ and $\T_{i+1}$. There are two delicate points:
\begin{itemize}
\item[(i)] at each step, when adding a new generation $\SS_i$, we need to ensure that the resulting generation set is still complete and contains no non-trivial resonances except for the prescribed ones. Such resonances are those of the form $(k,v_1,v_2,k,v_3,v_4)$ where $v_1,v_2\in\SS_i$, $v_3,v_4\in\SS_{i+1}$ for some $1\leq i\leq N-1$ and $\{v_1,v_2,v_3,v_4\}$ are the vertices of a rectangle.
\item[(ii)] we need to ensure that the Sobolev norms grow by an arbitrarily large factor $K/\delta$, which requires taking $n$ (the number of elements in each $\SS_j$) and $N$ (the number of generations) large. 
\end{itemize}

The point (i) is a question of combinatorics. It requires some careful classification of the possible resonances and it turns out to be significantly more complicated than the cubic case. We discuss this in subsection \ref{genset}.

The point (ii) is treated exactly in the same way as in \cite{CKSTT}, we discuss it for completeness in subsection \ref{density}, Remark \ref{Nn}.

\smallskip

Given a generation set $\SS$ as above we proceed in the following way:
first we restrict to the finite-dimensional invariant subspace where $\beta_k=0$ for all $k\notin \SS$.
To further simplify the dynamics we restict to the invariant subspace:
$$ \beta_v(t)= b_i(t) \,,\qquad \forall v\in \SS_i\,, \quad \forall i=1,\ldots,n $$ 
this is the so called {\em toy model}. Note that the periodic solutions $\T_i$ live in this subspace. The toy model is a Hamiltonian system, with Hamiltonian given by \eqref{hacca} and with the constant of motion $J=\sum_{i=1}^N|b_i|^2$. We work on the sphere $J=1$, which contains all the $\T_i$ with action $|b_i|^2=1$.

As discussed above, we construct a chain of heteroclinic connections going from $\T_1$ to $\T_N$. Then, we prove (see Proposition \ref{prop.slider}) the existence of a {\em slider solution} which ``shadows'' this chain, starting at time $0$ from a neighborhood of $\T_3$ and\footnote{One could ask why we construct a slider solution diffusing from the third mode $b_3$ to the third-to-last mode $b_{N-2}$, instead of diffusing from the first mode $b_1$ to the last mode $b_N$. The reason is that, since we rely on the proof given in \cite{CKSTT}, our statement is identical to the ones in Proposition 2.2 and Theorem 3.1 in \cite{CKSTT}. As in \cite{CKSTT}, also in our case it would be possible to diffuse from the first to the last mode just by overcoming some very small notational issues.} ending at time $T$ in a neighborhood of $\T_{N-2}$.

 We proceed as follows: first, we perform a symplectic reduction that will allow us to study the local dynamics close to the periodic orbit $\T_j$, which puts the Hamiltonian in form \eqref{reduced}. The new variables $c_k$ are the ones obtained by synchronizing the $b_k$ ($k\neq j$) with the phase of $b_j$. Then, we diagonalize the linear part of the vector field associated to \eqref{reduced}. In particular, the eigenvalues are the Lyapunov exponents of the periodic orbit $\T_j$. As for the cubic case, one obtains that all the eigenvalues are purely imaginary, except four of them which, due to the symmetries of the problem, are of the form $\lambda,\lambda,-\lambda,-\lambda\in\R$. Note that these hyperbolic directions are directly related to the heteroclinic connections connecting $\T_j$ to $\T_{j-1}$ and to $\T_{j+1}$.
%
It turns out that the heteroclinic connections are straight lines in the variables $c_k$. The equations of motion for the reduced system have the form
 \eqref{brut} (which is very similar to the cubic case): this is crucial in order to be able to apply almost verbatim the proof given in \cite{CKSTT}. Note that it is not obvious {\em a priori} that equations \eqref{brut} hold true: for instance, this turns out to be false for the NLS of degree $\geq7$.

The strategy of the proof, which is exactly the same as in \cite{CKSTT}, consists substantially of two parts:
\begin{itemize}
\item studying the linear dynamics close to $\T_j$, treating the non-linear terms as a small perturbation: one needs to prove that the flow associated to equations \eqref{brut} maps points close to the \textit{incoming} heteroclinic connection (from $\T_{j-1}$) to points close to the \textit{outcoming} heteroclinic connection (towards $\T_{j+1}$) (note that, in order to take advantage of the linear dynamics close to $\T_j$, we need that almost all the energy is concentrated on $\SS_j$);
\item following closely the heteroclinic connection in order to flow from a neighborhood of $\T_j$ to a neighborhood of $\T_{j+1}$.
\end{itemize}
The precise statement of these two facts requires the introduction of the notions of {\em targets} and {\em covering} and is summarized in Proposition \ref{covering}. The main analytical tool for the proof are repeated applications of Gronwall's lemma. Our proof of Proposition \ref{covering} follows almost verbatim the proof of the analogue statement, given in Section 3 of \cite{CKSTT}. However, the only way to check that the proof works also in our case, is to go through the whole proof in \cite{CKSTT}, which is rather long and technical, and make the needed adaptations. Therefore, for the convenience of the reader, in Appendix \ref{slider-sol} we give a summary of the proof of Proposition \ref{covering}, highlighting the points where there are significant differences with \cite{CKSTT}.

\subsection{Comparison with the cubic case and higher order NLS equations}

In the cubic NLS, the only resonant sets of frequencies are rectangles, which makes completely natural the choice of using rectangles as building blocks of the generation set $\SS$. In the quintic and higher degree NLS many more resonant sets appear, which a priori gives much more freedom in the construction of $\SS$. In particular, in the quintic case sets of the form $\mathfrak S^{(2)}$ are the most generic resonant sets, and therefore it would look reasonable to use them as building blocks. However (see Appendix \ref{disegni}), such a choice does not allow full energy transfer from a generation to the next one and is therefore incompatible with our strategy. The same happens if one uses sets of the form $\mathfrak S^{(3)}$. This leads us to use rectangles for the construction of $\SS$ also in the quintic case.

It is worth remarking that, while non-degenerate rectangles do not exist in one space dimension, sets of the form $\mathfrak S^{(2)}$, $\mathfrak S^{(3)}$ already exist in one dimension. The equations of the toy model only depend on the combinatorics of the set $\SS$. Therefore, if one were able to prove diffusion in a toy model built with resonant sets of the form $\mathfrak S^{(2)}$, $\mathfrak S^{(3)}$ (or other resonant sets that exist already in one dimension), then one could hope to prove the same type of result for some one-dimensional (non-cubic) NLS.

The use of rectangles as building blocks for the generation set of a quintic or higher order NLS makes things more complicated, since the rectangles induce many different resonant sets, see section \ref{toy.model}. This leads to combinatorial problems which make it harder to prove the non-degeneracy and completeness of $\SS$. The equations of the toy model also have a more complicated form than in the cubic case. Since this type of difficulties grows with the degree, dealing with the general case will most probably require some careful -- and possibly complicated-- combinatorics and one cannot expect to have a completely explicit formula for the toy model Hamiltonian of any degree.

In the quintic case the formula is explicit and relatively simple and we can explicitly perform the symmetry reduction. After some work, we still get equations of the form \eqref{brut}, which resembles the cubic case with some relevant differences: here the Lyapunov exponent $\lambda$ depends on $n$ and tends to infinity as $n\to\infty$; moreover, the non-linear part of the vector field associated to \eqref{reduced} is not homogeneous in the variables $c_k$, as it contains both terms of order 3 and 5 (in the cubic case, it is homogeneous of order 3).

For the NLS of higher degree, not only the reduced Hamiltonian gets essentially unmanageable, but there also appears a further  difficulty. Already for the NLS of degree $7$, a toy model built using rectangles (after symplectic reduction and diagonalization) does not satisfy equations like \eqref{brut}, meaning that the heteroclinic connections are not straight lines. Such a problem can be probably overcome, but this requires a significant adaptation of the analytical techniques used in order to prove the existence of the slider solution (work in progress with M. Guardia).

\subsection{Plan of the paper}

In Section \ref{toy.model} We assume to have a generation set $\SS=\cup_{i=1}^N \SS_i$ which satisfies all the needed non-degeneracy properties and deduce the form of the toy model Hamiltonian.  Then we study this Hamiltonian and prove the existence of slider solutions.

In Section \ref{very.big.s} we prove the existence of  non-degenerate  generation sets such that the corresponding slider solution undergoes the required growth of Sobolev norms.

In Section \ref{all.together.now} we prove, via the approximation Lemma \ref{lemma.approx1} and a scaling argument, the persistence of solutions with growing Sobolev norm for the full NLS.

Since some of the proofs follow very closely the ones in \cite{CKSTT}, we move them to appendix.



\section{The toy model}\label{toy.model}
We now define a finite subset $\mathcal S=\cup_{i=1}^N \mathcal S_i\subset \Z^2$  which satisfies appropriate {\em non-degeneracy conditions} (Definition \ref{def.ND}) as explained in the introduction. In the following we assume that such a set exists. This is not obvious and will be discussed in section \ref{genset}.

 For reasons that will be clear, and following \cite{CKSTT}, the $\SS_i$'s will be called \textit{generations}.
 In order to describe the resonances which connect different generations we introduce some notation. 

\begin{definition}[Family]\label{def.fam}
A family (of age $i\in\{1,\ldots,N-1\}$) is a list $(v_1,v_2;v_3,v_4)$ of elements of $\SS$ such that the points form the vertices of a non-degenerate rectangle
$$
v_1+v_2=v_3+v_4\,,\quad |v_1|^2+|v_2|^2=|v_3|^2+|v_4|^2
$$
and such that one has $v_1,v_2\in\SS_i$ and $v_3,v_4\in\SS_{i+1}$.
Whenever $(v_1,v_2;v_3,v_4)$ form a family, we say that $v_1,v_2$ are the parents of $v_3,v_4$ and that $v_3,v_4$ are the children of $v_1,v_2$. Moreover, we say that $v_1$ is the spouse of $v_2$ (and vice versa) and that $v_3$ is the sibling of $v_4$ (and vice versa). We denote (for instance) $v_1=v_3^{par_1}$, $v_2=v_3^{par_2}$, $v_1=v_2^{sp}$, $v_4=v_3^{sib}$, $v_3=v_1^{ch_1}$, $v_4=v_1^{ch_2}$.
\end{definition}
\begin{remark}
If $(v_1,v_2;v_3,v_4)$ is a family of age $i$, then the same holds for its trivial permutations $(v_2,v_1;v_3,v_4)$, $(v_1,v_2;v_4,v_3)$ and $(v_2,v_1;v_4,v_3)$.
\end{remark}
\begin{definition}\label{def.res}
An integer vector $\lambda\in \Z^{|\mathcal S|}$ such that
$$
\sum_i \lambda_i=0\,,\quad |\lambda|:= \sum_i|\lambda_i|\leq 6 
$$
 is resonant for $\SS$ if
$$ \sum_i \lambda_i v_i=0\,,\quad \sum_i \lambda_i |v_i|^2=0 $$

\end{definition}
Note that to a family $\FF=(v_1,v_2;v_3,v_4)$ we associate a special resonant vector $\lambda^{\FF}$ with $|\lambda|=4$, through $\sum_i \lambda_i^{\FF} v_i=v_1+v_2-v_3-v_4$. Similarly, to the couple of parents in the family $\FF$ we associate the vector $\lambda^{\FF_p}$ through $\sum_i \lambda_i^{\FF_p} v_i=v_1+v_2$ and to the couple of children we associate $\lambda^{\FF_c}$ through $\sum_i \lambda_i^{\FF_c} v_i=v_3+v_4$, so that $\lambda^{\FF}=\lambda^{\FF_p}-\lambda^{\FF_c}$.

\begin{definition}[Generation set]\label{def.gen}
The set $\SS$ is said to be a generation set if it satisfies the following:
\begin{enumerate}
\item For all $i\in\{1,\ldots,N-1\}$, every $v\in\SS_i$ is a member of one and only one (up to trivial permutations) family of age $i$. We denote such a family by $\FF^v$. (Note that $\FF^v=\FF^w$ if $v=w^{sp}$.)
\item For all $i\in\{2,\ldots,N\}$, every $v\in\SS_i$ is a member of one and only one (up to trivial permutations) family of age $i-1$. We denote such a family by $\FF_v$. (Note that $\FF_v=\FF_w$ if $v=w^{sib}$.)
\item For all $v\in\cup_{i=2}^{N-1}\SS_i$, one has $v^{sp}\neq v^{sib}$.
\end{enumerate}
\end{definition}

\begin{remark}
The vectors $\lambda^{\FF}$ corresponding to the families of a generation set are linearly independent.
\end{remark}

Note that, whenever two families $\FF_1$ and $\FF_2$ have a common member (which must be a child in one family and a parent in the other one), then $\lambda^{\FF_1}+\lambda^{\FF_2}$ is a non-trivial resonant vector whose support has cardinality exactly $6$. This motivates the following definition:
\begin{definition}[Resonant vector of type CF]\label{def.CF}
A resonant vector $\lambda$ is said to be of type CF (couple of families) if there exist two families $\FF_1\neq\FF_2$ such that $\lambda=\pm(\lambda^{\FF_1}+\lambda^{\FF_2})$. (Note that, since $|\lambda|\leq6$, the two families $\FF_1,\FF_2$ must have a common member.)
\end{definition}

We say that a generation set is non-degenerate if the following condition is fulfilled.
\begin{definition}[Non-degeneracy]\label{def.ND}
Suppose that there exists $\lambda\in \Z^{|\mathcal S|}$, with $\sum_i\lambda_i=1$ and $|\lambda|\leq5$, such that
$$
\sum_i \lambda_i |v_i|^2-|\sum_i \lambda_i v_i|^2=0\ .
$$
Then only four possibilities are allowed:
\begin{enumerate}
\item $|\lambda|=1$.
\item $|\lambda|=3$ and the support of $\lambda$ consists exactly of three distinct elements of the same family and the two $\lambda_i$'s appearing with a positive sign correspond either to the two parents or to the two children of the family.
\item $|\lambda|=5$ and there exist a family $\FF$ and an element $v\in\SS$ such that $\lambda=\pm\lambda^{\FF}+v$.
\item $|\lambda|=5$ and there exists $v\in\SS$ such that $\lambda-v$ is a resonant vector of type CF.
\end{enumerate}
\end{definition}

Note that, if $\SS$ is a non-degenerate generation set and $\lambda$ is a resonant vector, then either $\lambda=\pm\lambda^{\FF}$ for some family $\FF$ or $\lambda$ is a resonant vector of type CF.

In what follows we will assume that $\SS$  is a non-degenerate generation set. 
This implies that  $\mathcal S$ is complete and all the  subsets $\SS_i$ are pairwise disjoint, complete and action preserving.
Finally the only resonances which appear are those induced by the family relations.
Then, the Hamiltonian restricted to $\SS$ is 
\begin{equation}\label{hamish}
\mathcal{H}\vert_\SS=\frac{1}{3}\left(\sum_{j\in\SS}|\beta_j|^6+9\sum_{\substack{j,k\in\SS\\j\neq k}}|\beta_j|^4|\beta_k|^2+36\sum_{\substack{j,k,m\in\SS\\j\prec k\prec m}}|\beta_j|^2|\beta_k|^2|\beta_m|^2\right) +
\end{equation}
$$
+3\sum_{i=1}^{N-1}\sum_{j\in\SS_i}({\beta_j}\beta_{j^{sp}}\bar\beta_{j^{ch_1}}\bar\beta_{j^{ch_2}}+{\bar\beta_j}\bar\beta_{j^{sp}}\beta_{j^{ch_1}}\beta_{j^{ch_2}})\left(2\sum_{\substack{k\in\SS\\k\notin\FF^j}}|\beta_k|^2+\sum_{m\in\FF^j}|\beta_m|^2\right) +
$$
$$
+12\sum_{i=2}^{N-1}\sum_{j\in\SS_i}\left(\beta_{j^{par_1}}\beta_{j^{par_2}}\beta_{j^{sp}}\bar\beta_{j^{sib}}\bar\beta_{j^{ch_1}}\bar\beta_{j^{ch_2}}+\beta_{j^{ch_1}}\beta_{j^{ch_2}}\beta_{j^{sib}}\bar\beta_{j^{sp}}\bar\beta_{j^{par_1}}\bar\beta_{j^{par_2}}\right)\ .
$$

We restrict to the invariant subspace $\DD\subset\SS$ where $\beta_k= b_i$ for all $k\in \SS_i$ and $\forall i=1,\ldots,N$. Denote by $n$ (which must be an even integer number) the cardinality of each generation. Following the construction in \cite{CKSTT}, one has $n=2^{N-1}$. A straightforward computation (involving some easy combinatorics) of the Hamiltonian yields

\begin{eqnarray*}
\frac{3}{n}\mathcal{H}|_{\DD}=&&\sum_{k=1}^{N}|b_k|^6+9\left[(n-1)\sum_{k=1}^{N}|b_k|^6+n\sum_{\substack{k,\ell=1\\k\neq\ell}}^{N}|b_k|^4|b_{\ell}|^2\right]+\\
&+&6\left[(n-1)(n-2)\sum_{k=1}^{N}|b_k|^6+3n(n-1)\sum_{\substack{k,\ell=1\\k\neq\ell}}^{N}|b_k|^4|b_{\ell}|^2\right]+\\
&+&36n^2\sum_{\substack{k,\ell,m=1\\k<\ell<m}}^{N}|b_k|^2|b_{\ell}|^2|b_m|^2+\\
&+&18\sum_{k=1}^{N-1}\left(-|b_k|^2-|b_{k+1}|^2+n\sum_{\ell=1}^{N}|b_{\ell}|^2\right)(b_k^2\bar{b}_{k+1}^2+b_{k+1}^2\bar{b}_k^2)+\\
&+&36\sum_{k=2}^{N-1}|b_k|^2(b_{k-1}^2\bar{b}_{k+1}^2+b_{k+1}^2\bar{b}_{k-1}^2)
\end{eqnarray*}

The equations of motion for the toy model can be deduced by considering the effective Hamiltonian $h(b,\bar b):=\frac{\mathcal H|_{\DD}(b,\bar b)}{n}$, endowed with the symplectic form $\Omega=idb\wedge d\bar b$.

Due to the conservation of the total mass $L$, the quantity $$J:=\frac{L}{n}=\sum_{k=1}^N|b_k|^2$$ is a constant of motion.

Evidencing the dependence on $J$, we get
\begin{eqnarray}\label{hacca}
3h &= & 4 \sum_{k=1}^{N}|b_k|^6 - 9 n \sum_{h=1}^N|b_h|^2 \left[ \sum_{k=1}^{N}|b_k|^4  -2 \sum_{k=1}^{N-1} (b_k^2\bar{b}_{k+1}^2+b_{k+1}^2\bar{b}_k^2)\right]\\
&+&18\sum_{k=1}^{N-1}(-|b_k|^2-|b_{k+1}|^2)(b_k^2\bar{b}_{k+1}^2+b_{k+1}^2\bar{b}_k^2)+\nonumber\\
&+&36\sum_{k=2}^{N-1}|b_k|^2(b_{k-1}^2\bar{b}_{k+1}^2+b_{k+1}^2\bar{b}_{k-1}^2)\ .\nonumber
\end{eqnarray}


\subsection{Invariant subspaces}
Since $J$ is a constant of motion the dynamics is confined to its level sets. For simplicity, we will restrict to $J=1$, i.e. to
$$
\Sigma:=\{b\in \mathbb C^{N}: \sum_{k=1}^N |b_k|^2=1\}.
$$
All the monomials in the toy model Hamiltonian have even degree in each of the modes $(b_j,\bar b_j)$, which implies that
$$
\mathrm{Supp}(b):=\left\{1\leq j\leq N\middle|b_j\neq0\right\}
$$
is invariant in time. This automatically produces many invariant subspaces some of which  will play a specially important role, namely:
{\it (i)}  the subspaces $M_j$ corresponding to $\mathrm{Supp}(b)=\{j\}$ for some $1\leq j\leq N$. In this case the dynamics is confined to the circle $|b_j|^2=J$, with
\begin{equation}\label{vvv}
b_j(t)=\sqrt{J}\mathrm{exp}\left[-i\left(3n-\frac43\right)J^2 t\right]\ .
\end{equation}
The intersection of $M_j$ with $\Sigma$
 is a single periodic orbit, which we denote by $\T_j$.

{\it (ii)} The subspaces generated by $M_j$ and $M_{j+1}$ (corresponding to $\mathrm{Supp}(b)=\{j,j+1\}$) for some $1\leq j\leq N-1$. Here the Hamiltonian becomes
\begin{eqnarray}\label{www}
3h_{2g} &= & 4 \left(|b_j|^6+|b_{j+1}|^6\right) - 9 n \left(|b_j|^2+|b_{j+1}|^2\right) \left[|b_j|^4+|b_{j+1}|^4      - 2 (b_j^2\bar{b}_{j+1}^2+b_{j+1}^2\bar{b}_j^2)\right]\nonumber\\
&-&18\left(|b_j|^2+|b_{j+1}|^2\right)\left(b_j^2\bar{b}_{j+1}^2+b_{j+1}^2\bar{b}_j^2\right)
\end{eqnarray}
Passing to symplectic polar coordinates:
$$ b_j= \sqrt{I_1}e^{i \theta_1} \,,\quad b_{j+1}= \sqrt{I_2}e^{i \theta_2} \,,$$ we have
$$
3h_{2g}= (4-9n)(I_1+I_2)^3 +6(I_1+I_2)I_1I_2\left( 3 n-2 + 6 (n-1) \cos(2(\theta_1-\theta_2))  \right)\,,
$$
since $J=I_1+I_2$ is a conserved quantity the dynamics is integrable and easy to study. 

We pass to the symplectic variables
 $$
 J,I_1,\theta_2,\varphi=\theta_2-\theta_1
 $$
 and obtain the Hamiltonian
 $$
 3h_{2g}= (4-9n)J^3 +6JI_1(J-I_1)\left( 3 n-2 + 6 (n-1) \cos(2\varphi)  \right)\,.
 $$
The phase portrait (ignoring the evolution of the cyclic variable $\theta_2$) restricted to $\Sigma$ is described in Figure \ref{fig1}.
 \begin{figure}[!ht]
\centering
\begin{minipage}[b]{11cm}
\centering
{\psfrag{I}{$I_1$}
\psfrag{J}{$I_1=1$}
\psfrag{a}[c]{$-\pi$}
\psfrag{b}[c]{$-\pi+\varphi_0$}
\psfrag{c}[c]{$-\varphi_0$}
\psfrag{f}[c]{$\pi$}
\psfrag{d}[c]{$\varphi_0$}
\psfrag{e}[c]{$\pi-\varphi_0$}
\psfrag{g}{$\varphi$}
\includegraphics[width=11cm]{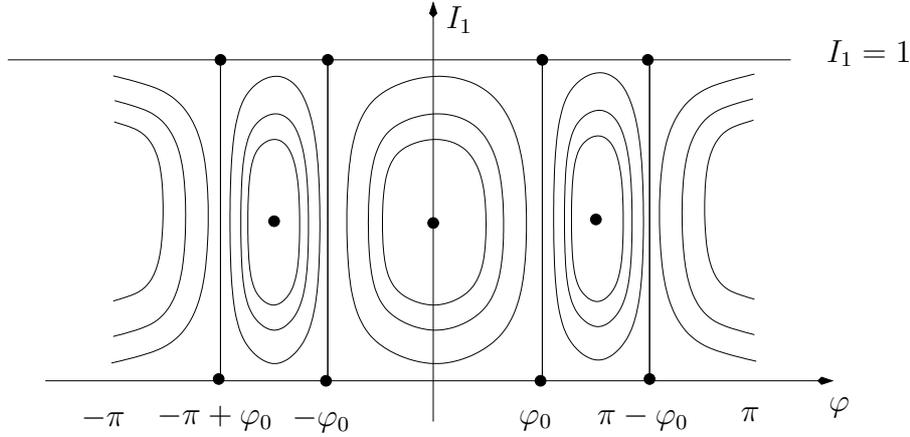}
}
\caption{\footnotesize{Phase portrait of the two generation Hamiltonian $h_{2g}$ on $\Sigma$.}}\label{fig1}
\end{minipage}
\end{figure}

\begin{remark}
The coordinates $I_1,\varphi$ and the domain given by the cylinder $(\varphi,I_1)\in\mathbb S^1\times[0,1]$ are singular since the angle $\varphi=\theta_2-\theta_1$ is ill-defined when $I_1=0$ or $I_1=1$. In the correct picture for the reduced dynamics, each of the lines $I_1=0$ and $I_1=1$ should be shrunk to a single point, thus obtaining (topologically) a two dimensional sphere (see Figure \ref{sphere}).

This can also be seen in the following way. The level set $J=1$ is a three dimensional sphere $\mathbb S^3$, with the gauge symmetry group $\mathbb S^1$ acting freely on it. Due to the Hopf fibration, the topology of the quotient space is $\mathbb S^2$.

 \begin{figure}[!ht]
\centering
\begin{minipage}[b]{11cm}
\centering
\includegraphics[width=6cm]{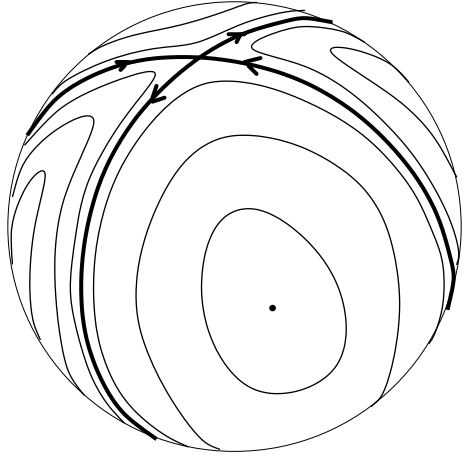}
\caption{\footnotesize{A sketch of the phase portrait of the two generation Hamiltonian $h_{2g}$ on $\Sigma$ in the correct topology.}}\label{sphere}
\end{minipage}
\end{figure}
\end{remark}

 Note that,  as for the case of the cubic NLS (see \cite{CKSTT,GuaKal}), there exist heteroclinic connections linking  $\T_j$ to  $\T_{j+1}$.
 Again as in the cubic case the orbits have fixed angle 
 $$\varphi(t)=\varphi_0= \frac12\arccos(-\frac{3n-2}{6(n-1)})  \,,\quad  I_1(t)= \frac{e^{2\lambda t}}{1+e^{2\lambda t}} $$
 where $\lambda= 2\sqrt{(9n-8)(3n-4)}$.
Our aim will be to construct slider solutions that are very concentrated on the mode $b_3$ at the time $t=0$ and very concentrated on the mode $b_{N-2}$ at the time $t=T$. 
These solutions will start very close to the periodic orbit $\T_3$ and then use the heteroclinic connections in order to slide from $\T_3$ to $\T_4$ and so on until $\T_{N-2}$.

\subsection{Symplectic reduction}

Now, since we are interested in studying the dynamics close to the $j$-th periodic orbit $\T_j$, we introduce a set of coordinates which are in phase with it  and give a symplectic reduction with respect to the constant of motion $J$. This procedure is the same that was carried out, for the cubic NLS, in \cite{GuaKal} and, substantially, already in \cite{CKSTT}.

Let $\vartheta^{(j)}$ be the phase of the complex number $b_j$. Then, for $k\neq j$, let $c_k^{(j)}$ the variable obtained by conjugating $b_k$ with the phase $\vartheta^{(j)}$, i.e. $$c_k^{(j)}=b_ke^{-i\vartheta^{(j)}}\ .$$
Then, the change of coordinates (well defined on $\{b_j\neq 0\}$) given by $$(b_1,\ldots,b_N,\bar b_1,\ldots,\bar b_N)\mapsto$$ $$\mapsto(c_1^{(j)},\ldots,c_{j-1}^{(j)},J,c_{j+1}^{(j)},\ldots,c_N^{(j)},\bar c_1^{(j)},\ldots,\bar c_{j-1}^{(j)},\vartheta^{(j)},\bar c_{j+1}^{(j)},\ldots,\bar c_N^{(j)})$$ is symplectic. Namely, in the new coordinates the symplectic form is given by
$$\Omega=idc^{(j)}\wedge d\bar c^{(j)}+dJ\wedge d\vartheta^{(j)}\ .$$
Then, we rewrite the Hamiltonian $h$ in terms of the new coordinates (from now on, in order to simplify the notation, we will omit the superscript $(j)$ in the $c^{(j)}$ variables and in their complex conjugates $\bar{c}^{(j)}$ and in the phase $\vartheta^{(j)}$). Thus, we get the expression

\begin{eqnarray}\label{reduced}
& 3h & \!=\;  4\sum_{k\neq j}|c_k|^6- 4(\sum_{k\neq j}|c_k|^2)^3+ (18 n- 12) J^2 \sum_{k\neq j}|c_k|^2-9 n J \sum_{k\neq j}|c_k|^4+\nonumber\\&-&  (9 n-12)J(\sum_{k\neq j}|c_k|^2)^2+ 18\sum_{\substack{k=1\\k\neq j-1,j}}^{N-1}\left(-|c_k|^2-|c_{k+1}|^2+nJ\right)(c_k^2\bar{c}_{k+1}^2+c_{k+1}^2\bar{c}_k^2)+\nonumber\\
&+&18\left[\sum_{\substack{k=1\\k\neq j-1,j}}^{N}|c_k|^2+(n-1)J\right]\left(J-\sum_{\substack{\ell=1\\ \ell\neq j}}^{N}|c_{\ell}|^2\right)(c_{j-1}^2+\bar{c}_{j-1}^2)+\nonumber\\
&+&18\left[\sum_{\substack{k=1\\k\neq j,j+1}}^{N}|c_k|^2+(n-1)J\right]\left(J-\sum_{\substack{\ell=1\\ \ell\neq j}}^{N}|c_{\ell}|^2\right)(c_{j+1}^2+\bar{c}_{j+1}^2)+\label{xxx}\\
&+& 36\sum_{\substack{k=2\\k\neq j-1,j,j+1}}^{N-1}|c_k|^2(c_{k-1}^2\bar{c}_{k+1}^2+c_{k+1}^2\bar{c}_{k-1}^2)+
36|c_{j-1}|^2\left(J-\sum_{\substack{k=1\\k\neq j}}^{N}|c_k|^2\right)(c_{j-2}^2+\bar{c}_{j-2}^2)+\nonumber\\
&+& 36\left(J-\sum_{\substack{k=1\nonumber\\k\neq j}}^{N}|c_k|^2\right)(c_{j-1}^2\bar{c}_{j+1}^2+c_{j+1}^2\bar{c}_{j-1}^2)+36|c_{j+1}|^2\left(J-\sum_{\substack{k=1\\k\neq j}}^{N}|c_k|^2\right)(c_{j+2}^2+\bar{c}_{j+2}^2)\ .\nonumber
\end{eqnarray}
Observe that the Hamiltonian $h$ does not depend on $\vartheta$. Since $J$ is a constant of motion, the terms depending only on $J$ can be erased from the Hamiltonian. Up to those constant terms, one has
\begin{equation}
h=h_2+r_4\ ,
\end{equation}
where $h_2$ is the part of order 2 in $(c,\bar c)$ (which corresponds to the linear part of the vector field) and $r_4$ is of order at least 4 in $(c,\bar c)$. By an explicit computation, one obtains
\begin{equation}
h_2=2J^2\left[(3n-2)\sum_{\substack{k=1\\k\neq j}}^{N}|c_k|^2+3(n-1)(c_{j-1}^2+\bar{c}_{j-1}^2+c_{j+1}^2+\bar{c}_{j+1}^2)\right]\ .
\end{equation}
It is easily seen that the dynamics associated to the vector field generated by $h_2$ is elliptic in the modes $c_k$ with $1\leq k\leq j-2$ or $j+2\leq k\leq N$, while it is hyperbolic in the modes $c_{j-1}$ and $c_{j+1}$. In order to put in evidence the hyperbolic dynamics, we perform a change of coordinates which diagonalizes the linear part of the vector field. Namely, for $k=j-1,j+1$, we set
$$c_k=\frac{1}{\sqrt{2\Im(\omega^2)}}(\bar\omega c^-_k+\omega c^+_k)$$
$$\bar{c}_k=\frac{1}{\sqrt{2\Im(\omega^2)}}(\omega c^-_k+\bar\omega c^+_k)$$
where $\omega=e^{i\varphi_0}$ with
$$\varphi_0=\frac{1}{2}\arccos\left(-\frac{3n-2}{6(n-1)}\right)\ .$$

Note that this change of variables affects only the hyperbolic modes, which are expressed in terms of the new variables $(c^+_{j-1},c^-_{j-1},c^+_{j+1},c^-_{j+1})$. This transformation is symplectic, 
 writing $h_2$ as a function of the new variables we get
\begin{equation}
h_2=2J^2\left[(3n-2)\sum_{\substack{k=1\\k\neq j-1,j,j+1}}^{N}|c_k|^2+\sqrt{(9n-8)(3n-4)}(c^+_{j-1}c^-_{j-1}+c^+_{j+1}c^-_{j+1})\right]\ .
\end{equation}
We have proved that the periodic orbit \eqref{vvv} is hyperbolic and we have explicitly written the quadratic part of the Hamiltonian in the local variables. 
Similarly to the case of the cubic NLS these local variables are are actually well adapted to describing also the global dynamics  connecting two periodic orbits, as discussed in the previous section. 

To this purpose we study the integrable {\em two generation} Hamiltonian \eqref{www} after all the changes of variables described in this section, i.e. in the variables $c^+_{j+1},c^-_{j+1}$.  Direct substitution shows that the Hamiltonian is given by
$$h_{2g}=2J\sqrt{(9n-8)(3n-4)}c^+_{j+1}c^-_{j+1}\left\{J-\frac{1}{2\Im(\omega^2)}[(c^+_{j+1})^2+(c^-_{j+1})^2+2\Re(\omega^2)c^+_{j+1}c^-_{j+1}]\right\}\ .$$
It is important to note that all the monomials in $h_{2g}$ contain both $c^+_{j+1}$ and $c^-_{j+1}$, so the subspaces $c^+_{j+1}=0$ and $c^-_{j+1}=0$ (which correspond to the heteroclinic connections) are invariant for the 2-generation dynamics.
It is useful to denote by $c^*=\{c_h\}_{h\neq j-1,j,j+1 }$ so that the dynamical variables of the Hamiltonian \eqref{xxx} become $(c^+_{j-1},c^-_{j-1},c^+_{j+1},c^-_{j+1},c^*,\bar{c}^*)$.

Now, since $$h_{2g}=h|_{c^+_{j-1}=c^-_{j-1}=q_1=0,c^*=0}\ ,$$ exploiting also the symmetry between $(c^+_{j-1},c^-_{j-1})$ and $(c^+_{j+1},c^-_{j+1})$, this implies that also in $h$ none of the monomials in $(c^+_{j-1},c^-_{j-1},c^+_{j+1},c^-_{j+1},c^*,\bar{c}^*)$depends only on one of the variables $c^+_{j-1},c^-_{j-1},c^+_{j+1},c^-_{j+1}$.

Finally, we recall that all the monomials in $h(c^+_{j-1},c^-_{j-1},c^+_{j+1},c^-_{j+1},c^*)$ have even degree in each of the couples $(c^*_k,\bar{c}^*_k)$ and in both couples $(c^+_k,c^-_k)$.

From these observations, and from the bound $O(c^2)\lesssim J=O(1)$, we immediately deduce the following relations about the Hamilton equations associated to $h$:
\begin{eqnarray}
\dot c^-_{j-1} &= &-2J^2\sqrt{(9n-8)(3n-4)}c^-_{j-1}+O(c^2c^-_{j-1})+O(c^2_{\neq j-1}c^+_{j-1})\label{brut}\\
\dot c^+_{j-1} &= &2J^2\sqrt{(9n-8)(3n-4)}c^+_{j-1}+O(c^2c^+_{j-1})+O(c^2_{\neq j-1}c^-_{j-1})\nonumber\\
\dot c^-_{j+1}&= &-2J^2\sqrt{(9n-8)(3n-4)}c^-_{j+1}+O(c^2c^-_{j+1})+O(c^2_{\neq j+1}c^+_{j+1})\nonumber\\
\dot c^+_{j+1} &= & 2J^2\sqrt{(9n-8)(3n-4)}c^+_{j+1}+O(c^2c^+_{j+1})+O(c^2_{\neq j+1}c^-_{j+1})\nonumber\\
\dot{c}^* &= & 2 J^2(3n+2) i c^*+O(c^2c^*)\ ,\nonumber
\end{eqnarray}
where we denote $c=(c^+_{j-1},c^-_{j-1},c^+_{j+1},c^-_{j+1},c^*)$, $c_{\neq j-1}=(c^+_{j+1},c^-_{j+1},c^*)$, $c_{\neq j+1}=(c^+_{j-1},c^-_{j-1},c^*)$. These relations are the precise analogue of Proposition 3.1 in \cite{CKSTT}, where the factor $2J^2\sqrt{(9n-8)(3n-4)}$ here replaces the factor $\sqrt{3}$ in \cite{CKSTT}. 

From the equations of motion \eqref{brut}, we deduce that
$$-i\dot{c}_{j+1}=\frac{\partial h_{2g}}{\partial\bar{c}_{j+1}}+O(c_{j+1}c^2_{\neq j+1})\ .$$
We have
$$h_{2g}=2J\sqrt{(9n-8)(3n-4)}c^+_{j+1}c^-_{j+1}(J-|c_{j+1}|^2)$$
where $c^+_{j+1},c^-_{j+1}$ can be thought of as functions of $(c_{j+1},\bar{c}_{j+1})$. Then
$$\dot{c}_{j+1}=2iJ\sqrt{(9n-8)(3n-4)}(J-|c_{j+1}|^2)\frac{\partial(c^+_{j+1}c^-_{j+1})}{\partial\bar{c}_{j+1}}+O(c_{j+1}c^+_{j+1}c^-_{j+1})+O(c_{j+1}c^2_{\neq j+1})\ .$$
We compute
$$2i\frac{\partial(c^+_{j+1}c^-_{j+1})}{\partial\bar{c}_{j+1}}=\sqrt{\frac{2}{\Im(\omega^2)}}(\omega c^+_{j+1}-\bar{\omega}c^-_{j+1})$$
from which we deduce
\begin{equation}\label{cacca1}
\dot{c}_{j+1}=J\sqrt{\frac{2(9n-8)(3n-4)}{\Im(\omega^2)}}(\omega c^+_{j+1}-\bar{\omega}c^-_{j+1})(J-|c_{j+1}|^2)+O(c_{j+1}c^+_{j+1}c^-_{j+1})+O(c_{j+1}c^2_{\neq j+1})
\end{equation}
which is the analogue for $c_{j+1}$ of equation (3.19) in \cite{CKSTT}. In the same way, one deduces
\begin{equation}\label{cacca2}
\dot{c}_{j-1}=J\sqrt{\frac{2(9n-8)(3n-4)}{\Im(\omega^2)}}(\omega c^+_{j-1}-\bar{\omega}c^-_{j-1})(J-|c_{j-1}|^2)+O(c_{j-1}c^+_{j-1}c^-_{j-1})+O(c_{j-1}c^2_{\neq j-1})
\end{equation}
which is the analogue of equation (3.19) in \cite{CKSTT} for the evolution of $c_{j-1}$.
\subsection{Existence of a ``slider solution''}

In this section, we are going to prove the following proposition (which is the analogue of Proposition 2.2 in \cite{CKSTT}), that establishes the existence of a slider solution.

\begin{proposition}\label{prop.slider}
For all $\epsilon>0$ and $N\geq6$, there exist a time $T_0>0$ and an orbit of the toy model such that
$$|b_3(0)|\geq1-\epsilon,\qquad\qquad|b_j(0)|\leq\epsilon\qquad j\neq3$$
$$|b_{N-2}(T_0)|\geq1-\epsilon,\qquad\qquad|b_j(T_0)|\leq\epsilon\qquad j\neq N-2\ .$$
Furthermore, one has $\|b(t)\|_{\ell^{\infty}}\sim1$ for all $t\in[0,T_0]$.

More precisely there exists a point $x_3$ within $O(\epsilon)$ of $\T_3$ (using the usual metric on $\Sigma$), a point $x_{N-2}$ within $O(\epsilon)$ of $\T_{N-2}$ and a time $T_0\geq 0$ such that $S(T_0)x_3 = x_{N-2}$, where $S(t)x$ is the dynamics at time $t$ of the toy model Hamiltonian with initial datum $x$.
\end{proposition}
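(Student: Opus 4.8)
The plan is to follow the strategy of \cite{CKSTT}, Proposition 2.2, adapting it to the quintic toy model whose reduced equations of motion are \eqref{brut}, \eqref{cacca1}, \eqref{cacca2}. The key observation enabling this is that the reduced system near $\T_j$ has exactly the same structure as in the cubic case: four real hyperbolic directions $(c^+_{j-1},c^-_{j-1},c^+_{j+1},c^-_{j+1})$ with Lyapunov exponent $\pm 2J^2\sqrt{(9n-8)(3n-4)}$, the remaining directions elliptic, the heteroclinic connections being the straight lines $c^-_{j+1}=0$ (resp. $c^+_{j-1}=0$), and the nonlinear remainder vanishing to the appropriate order on these invariant subspaces. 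So the abstract mechanism that produces a slider solution depends only on this structure, not on the precise coefficients.

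First I would introduce, as in Section 3 of \cite{CKSTT}, the notion of \emph{target}: a set of data near $\T_j$ described by the position and size of the stable/unstable hyperbolic coordinates together with control on the elliptic modes. I would then state Proposition \ref{covering} (the ``covering'' statement), asserting that the time-$t$ flow map sends a target near the incoming heteroclinic connection (arriving from $\T_{j-1}$) into a target near the outgoing heteroclinic connection (heading to $\T_{j+1}$), with quantitative control on how the parameters of the target transform. The proof of this is split into the two parts already announced in the introduction: (a) a \emph{linear/hyperbolic passage} near $\T_j$, where one shows via Gronwall that the hyperbolic coordinate $c^+$ grows like $e^{\lambda t}$ while $c^-$ decays, the elliptic modes stay small, and the nonlinear terms — which by \eqref{brut} are $O(c^2 c^\pm)$ or $O(c^2_{\neq\cdot}c^\mp)$ — remain a controlled perturbation; and (b) a \emph{heteroclinic passage} where one follows the explicit connection $I_1(t)=e^{2\lambda t}/(1+e^{2\lambda t})$, $\varphi\equiv\varphi_0$, again using Gronwall to show the true orbit shadows it while the off-connection modes stay negligible. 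Composing these $N$ times and choosing the target parameters in a consistent telescoping fashion yields the slider: start within $O(\epsilon)$ of $\T_3$, arrive within $O(\epsilon)$ of $\T_{N-2}$ at a finite time $T_0$. The conclusion $\|b(t)\|_{\ell^\infty}\sim 1$ throughout follows because at every moment the orbit is close either to some $\T_j$ or to a heteroclinic connection between consecutive $\T_j$'s, and on all of these $J=1$ forces one or two modes to have modulus $\sim 1$.

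The main obstacle is not conceptual but bookkeeping: one must verify that the constants in the Gronwall estimates close up \emph{uniformly} despite two features absent in the cubic case — the Lyapunov exponent $\lambda=2J^2\sqrt{(9n-8)(3n-4)}$ now depends on $n$ (and diverges as $n\to\infty$), and the nonlinear part of \eqref{reduced} is inhomogeneous, containing both cubic and quintic terms in $c$. Because $J=O(1)$ on $\Sigma$, the quintic terms are dominated by the cubic ones and the $n$-dependence of $\lambda$ only rescales time, so after tracking the dependence of all thresholds on $\epsilon$ (and noting $n=2^{N-1}$ is fixed once $N$ is fixed) the estimates go through exactly as in \cite{CKSTT}. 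Concretely I would: (1) fix $N\geq 6$ and $\epsilon>0$; (2) prove the single-step covering lemma near a generic $\T_j$ using \eqref{brut} and \eqref{cacca1}--\eqref{cacca2}; (3) chain the $N-4$ steps $\T_3\rightsquigarrow\T_4\rightsquigarrow\cdots\rightsquigarrow\T_{N-2}$, checking the output target of each step lies in the admissible input region of the next; (4) read off the initial and final localization and the bound $\|b(t)\|_{\ell^\infty}\sim 1$. The detailed verification of step (2)--(3), being essentially identical to \cite{CKSTT} up to the replacement of $\sqrt{3}$ by $2J^2\sqrt{(9n-8)(3n-4)}$ and the harmless extra quintic terms, is deferred to Appendix \ref{slider-sol}.
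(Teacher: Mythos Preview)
Your proposal is correct and follows essentially the same approach as the paper. The only presentational difference worth noting is that the paper makes your observation ``the $n$-dependence of $\lambda$ only rescales time'' into an explicit preliminary step: it rescales $t=2\sqrt{(9n-8)(n-4/3)}\,\tau$ so the Lyapunov exponent becomes exactly $\sqrt{3}$, and then states a three-target structure (incoming, ricochet, outgoing) per $\T_j$ rather than your two-part split, which lets it quote the covering machinery of \cite{CKSTT} verbatim rather than merely in spirit.
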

In order to prove Proposition \ref{prop.slider} of our paper, we completely rely on the proof of the analogue Proposition 2.2 in \cite{CKSTT}. 
In order to keep our notations as close as possible to those of \cite{CKSTT} we rescale the time $t=2\sqrt{(9n-8)(n-4/3)} \tau$ in our toy model, this means rescaling $h\rightarrow \sqrt{3}h/2\sqrt{(9n-8)(3n-4)} $, where $h$ is defined in \eqref{hacca}, so that the Lyapunov exponents of the linear dynamics are $\sqrt 3$.
We hence prove Proposition \ref{prop.slider} for the rescaled toy model.
By formul\ae \eqref{brut}, \eqref{cacca1}, \eqref{cacca2} we have the analogue of Proposition 3.1 and of eq. (3.19) of \cite{CKSTT}.
\begin{proposition}
Let $3\leq j\leq N-2$ and let $b(\tau)$ be a solution  of the rescaled toy model living on $\Sigma$ and  with $b_j(\tau)\neq 0$. We have the system of equations:
\begin{subequations}\label{bruttone}
\begin{align}
\dot c^-_{j-1} &= -\sqrt{3}c^-_{j-1}+O(c^2c^-_{j-1})+O(c^2_{\neq j-1}c^+_{j-1})\label{bruttone1}\\
\dot c^+_{j-1} &= \sqrt{3} c^+_{j-1}+O(c^2c^+_{j-1})+O(c^2_{\neq j-1}c^-_{j-1})\label{bruttone2}\\
\dot c^-_{j+1} &= -\sqrt{3}c^-_{j+1}+O(c^2c^-_{j+1})+O(c^2_{\neq j+1}c^+_{j+1})\label{bruttone3}\\
\dot c^+_{j+1} &=  \sqrt{3}c^+_{j+1}+O(c^2c^+_{j+1})+O(c^2_{\neq j+1}c^-_{j+1})\label{bruttone4}\\
\dot{c}^* &=  i \kappa  c^*+O(c^2c^*)\ ,\quad \kappa= \frac{\sqrt{3}(3n-2)}{\sqrt{(9n-8)(3n-4)} }\label{bruttone5}
\end{align}
\end{subequations}
Moreover,
\begin{equation}\label{cacca3}
\dot{c}_{j+1}=\sqrt{\frac{3}{2\Im(\omega^2)}}(\omega c^+_{j+1}-\bar{\omega}c^-_{j+1})(J-|c_{j+1}|^2)+O(c_{j+1}c^+_{j+1}c^-_{j+1})+O(c_{j+1}c^2_{\neq j+1})
\end{equation}
and
\begin{equation}\label{cacca4}
\dot{c}_{j-1}=\sqrt{\frac{3}{2\Im(\omega^2)}}(\omega c^+_{j-1}-\bar{\omega}c^-_{j-1})(J-|c_{j-1}|^2)+O(c_{j-1}c^+_{j-1}c^-_{j-1})+O(c_{j-1}c^2_{\neq j-1})
\end{equation}
Finally, since the equations \eqref{bruttone}  come from the Hamitonian \eqref{xxx} which is an even  polynomial of degree six, one has that all the symbols $O(c^3)$ are actually\footnote{ as in \cite{CKSTT}, we use the {\em schematic notation} $\OO(\cdot)$. The symbol $\OO(y)$ indicates a linear combination of terms that resemble $y$ up to the presence of multiplicative constants and complex conjugations. So for instance a term like $2i\bar c_{j+1}|c_{j+2}|^2c_{j+3}^2-3c_{j+1}|c_{j+2}|^4$ is of the form $\OO(c^5)$ and more precisely $\OO(c_{j+1}c_{\neq j+1}^4)$}  $\OO(c^3)+\OO(c^5)$. For instance
\begin{equation}\label{bruttine}
O(c^2c^-_{j-1})= \OO(c^2c^-_{j-1})+ \OO(c^4c^-_{j-1}) \,,\quad O(c^2_{\neq j-1}c^+_{j-1})=\OO(c^2_{\neq j-1}c^+_{j-1})+ \OO(c^2c^2_{\neq j-1}c^+_{j-1})
\end{equation}

\end{proposition}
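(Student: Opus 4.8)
The statement is, on the nose, the time-rescaled repackaging of the three systems \eqref{brut}, \eqref{cacca1}, \eqref{cacca2} already derived from the reduced Hamiltonian \eqref{reduced}/\eqref{xxx}, together with the degree count for the polynomial \eqref{xxx}. The plan is thus in three short steps. First, recall that we work on $\Sigma$, so $J=1$ throughout, and that in \eqref{brut} the hyperbolic coefficient equals $2J^2\sqrt{(9n-8)(3n-4)}=2\sqrt{(9n-8)(3n-4)}$, while the elliptic frequency of the $c^*$-modes is read off from the quadratic part $h_2$, namely $2J^2(3n-2)$. Performing the time rescaling of the statement — under which the hyperbolic coefficient becomes $\sqrt3$ — amounts to multiplying every component of the Hamiltonian vector field by the constant factor $\sqrt3/(2\sqrt{(9n-8)(3n-4)})$. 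This yields at once the leading terms of \eqref{bruttone1}--\eqref{bruttone4}, and turns the elliptic frequency into $\kappa=\sqrt3(3n-2)/\sqrt{(9n-8)(3n-4)}$, which is \eqref{bruttone5}. The same multiplication applied to \eqref{cacca1}--\eqref{cacca2} replaces the prefactor $J\sqrt{2(9n-8)(3n-4)/\Im(\omega^2)}$ by $\sqrt{3/(2\Im(\omega^2))}$ (using $J=1$), giving \eqref{cacca3}--\eqref{cacca4}.

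Second, the \emph{shape} of the error terms — which monomials may occur in the $O(\cdot)$'s and in which variables — is inherited verbatim from the discussion preceding \eqref{brut}. Concretely one uses: (a) no monomial of $h$ depends on only one of the four hyperbolic variables $c^+_{j-1},c^-_{j-1},c^+_{j+1},c^-_{j+1}$ (deduced there from $h_{2g}=h|_{c^+_{j-1}=c^-_{j-1}=c^*=0,\dots}$ and the symmetry between the $(j-1)$- and $(j+1)$-blocks); and (b) $h$ has even degree in each pair $(c^*_k,\bar c^*_k)$ and in each pair $(c^+_k,c^-_k)$, since every monomial of $\mathcal H|_\SS$ has even degree in each mode $(\beta_k,\bar\beta_k)$. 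Differentiating $h$ with respect to a given conjugate variable while keeping track of (a)--(b) produces exactly the supports $\OO(c^2c^-_{j-1})$, $\OO(c^2_{\neq j-1}c^+_{j-1})$, $\OO(c_{j+1}c^2_{\neq j+1})$, etc.\ displayed in \eqref{bruttone} and \eqref{cacca3}--\eqref{cacca4}; in particular no extra linear term can hide in the errors, because the full linear part of the vector field is the one generated by $h_2$, which is precisely what is written explicitly.

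Third, for the refinement that each symbol $O(c^3)$ is in fact $\OO(c^3)+\OO(c^5)$: the reduced Hamiltonian \eqref{xxx} (equivalently $h$ of \eqref{hacca} rewritten in the $c$-variables, restricted to $\Sigma$ and stripped of the pure-$J$ constants) is a polynomial in $(c,\bar c)$ all of whose monomials have total degree $2$, $4$ or $6$ — immediate from the fact that $\mathcal H|_\SS$ in \eqref{hamish} has only degree-$6$ monomials and that passing to the $c$'s only replaces some factors $\beta$ by $\sqrt J\,e^{i\vartheta}$. Hence each component of the Hamiltonian vector field, being ($\pm i$ times) a first partial derivative of $h$, has only monomials of degree $1$, $3$ and $5$; removing the degree-$1$ part, which is displayed, leaves a combination of degree-$3$ and degree-$5$ monomials, which is exactly what $\OO(c^3)+\OO(c^5)$ abbreviates, as illustrated in \eqref{bruttine}.

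The only laborious point is the bookkeeping in the second step: verifying directly from the long expression \eqref{xxx} that the displayed linear part is complete and that the numerical constants $\sqrt3$ and $\kappa$ come out as claimed. This is a finite and routine computation; conceptually there is nothing new here beyond the earlier symplectic reduction and diagonalization, and the statement is phrased so as to mirror Proposition 3.1 and equation (3.19) of \cite{CKSTT}.
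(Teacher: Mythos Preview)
Your proof is correct and follows exactly the paper's approach: the proposition is introduced there simply with ``By formul\ae\ \eqref{brut}, \eqref{cacca1}, \eqref{cacca2} we have the analogue of Proposition 3.1 and of eq.\ (3.19) of \cite{CKSTT}'', after the time rescaling $h\to \sqrt{3}\,h/(2\sqrt{(9n-8)(3n-4)})$ has been announced. You have merely (and usefully) unpacked this one-line reduction, checking the constants $\sqrt3$, $\kappa$, and $\sqrt{3/(2\Im(\omega^2))}$ and spelling out why the parity and support constraints on the monomials of \eqref{xxx} force the error terms to have the schematic shape $\OO(c^3)+\OO(c^5)$.
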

Note that the only difference with \cite{CKSTT} is that our remainder terms (of type $O(c^2c^-_{j-1})$, $O(c^2_{\neq j-1}c^+_{j-1})$, etc.) are not homogeneous of degree three but have also a term of degree five (which is completely irrelevant in the analysis).

We now introduce some definitions and notations of \cite{CKSTT}. 
\begin{definition}[Targets] A target is a triple $(M,d,R)$, where M is a subset of $\Sigma$, $d$ is a semi-metric on $\Sigma$ and $R>0$ is a radius. We say that a point $x\in \Sigma$  is within a target $(M,d,R)$ if we have $d(x,y)<R$ for some $y\in M$. Given two points $x,y \in \Sigma$, we say that $x$ hits $y$, and write $x \mapsto y$, if we have $y = S(t)x$ for some $t \geq 0$. Given an initial target $(M_1,d_1,R_1)$ and a final target $(M_2,d_2,R_2)$, we say that $(M_1,d_1,R_1)$ can cover $(M_2,d_2,R_2)$, and write $(M_1, d_1, R_1) \twoheadrightarrow (M_2, d_2, R_2)$, if for every $x_2 \in M_2$ there exists an $x_1 \in M_1$, such that for any point $y_1 \in \Sigma$ with $d(x_1, y_1) < R_1$ there exists a point $y_2 \in \Sigma$ with $d_2(x_2, y_2) < R_2$ such that $y_1$ hits $y_2$.
\end{definition}
We refer the reader to pages 64-66 of \cite{CKSTT} for a presentation of the main properites of targets.

 We need a number of parameters. First,  an increasing set of exponents
$$1\ll A^0_3 \ll A^+_3 \ll A^-_4 \ll\cdots\ll A_{N-2}^- \ll A^0_{N-2}.$$
For sake of concreteness, we will take these to be consecutive powers of $10$.
Next, we shall need a small parameter $0< \sigma\ll 1$
depending on $N$ and the exponents $A$ which   basically measures the distance to $\T_j$ at which the quadratic Hamiltonian  dominates the quartic terms.
Then we  need a set of scale parameters $$1\ll r_{N-2}^0\ll r_{N-2}^-\ll r_{N-2}^+\ll r_{N-3}^-\ll\cdots\ll r_3^+ \ll r_3^0$$
where each parameter is assumed to be sufficiently large depending on the preceding parameters and on $\sigma$ and the $A$' s. these parameters represent a certain shrinking of each target from the previous one (in order to guarantee that each target can be covered by the previous).
Finally, we need a very large time parameter $T\gg 1$
that we shall assume to be as large as necessary depending on all the previous parameters.

 Setting $$ \{c_1,\dots,c_{h}\}:=c_{\leq h}\,,\quad \{c_{h},c_{h+1},\dots,c_{N}\}=c_{\geq h}$$
 we call $c_{\leq j-1}$ the {\em trailing  modes},  $c_{\geq j+1}$ the {\em leading  modes} $c_{\leq j-2}$ the {\em trailing peripheral modes} and finally $c_{\geq j+2}$ the {\em leading peripheral modes}.
 We  construct a series of targets:
 \begin{itemize}
 \item An incoming target $(M_j^-,d_j^-ˆ',R_j^-ˆ')$ (located near the stable manifold of $\T_j$) defined as follows:
 
 \noindent $M_j^-$ is the subset of $\Sigma$ where 
 $$
 c_{\leq j-2},c^+_{j-1}=0\,,\quad c^-_{j-1}=\sigma\,,\quad |c_{\geq j+1}|\leq r^-_j e^{-2\sqrt 3 T}\,,
 $$
 $R_j^-=T^{A_j^-}$ and the semi-metric is
 $$
 d_j^- ( x , \tilde x ) : = e^{2\sqrt{3}T} |c_{\leq j-2}-\tilde c_{\leq j-2}|+e^{\sqrt{3}T}  |c_{ j-1}^--\tilde c_{ j-1}^-|+
 $$
 $$
 e^{4\sqrt{3}T}  |c_{ j-1}^++\tilde c_{ j-1}^+|+
 e^{3\sqrt{3}T} |c_{ \geq j+1}-\tilde c_{ \geq j+1}|
 $$
\item A ricochet target $(M_j^0, d_j^0, R_j^0)$ (located very near $\T_j$ itself), defined as follows:
 
 \noindent $M_j^0$ is the subset of $\Sigma$ where 
 $$
 c_{\leq j-1},c^-_{j+1}=0\,,\quad |c^+_{j+1}|\leq r^0_j e^{-\sqrt 3 T}\,,\quad |c_{\geq j+2}|\leq r^0_j e^{-2\sqrt 3 T}\,,
 $$
 $R_j^-=T^{A_j^0}$ and the semi-metric is
 $$
 d_j^0 ( x , \tilde x) : = e^{2\sqrt{3}T} \left(|c_{\leq j-2}-\tilde c_{\leq j-2}|+ |c_{ j+1}^++\tilde c_{ j+1}^+|\right)+e^{\sqrt{3}T}  |c_{ j-1}^--\tilde c_{ j-1}^-|+
 $$
 $$
 e^{3\sqrt{3}T}  \left(|c_{ j-1}^++\tilde c_{ j-1}^+|+|c_{ j+1}^-+\tilde c_{ j+1}^-|+ |c_{ \geq j+2}-\tilde c_{ \geq j+2}|\right)
 $$
\item  An outgoing target $(M_j^+,d_j^+,R_j^+)$ (located near the unstable manifold of $\T_j$) defined as follows:
 
 \noindent $M_j^+$ is the subset of $\Sigma$ where 
 $$
 c_{\leq j-1},c^-_{j+1}=0\,,\quad c^+_{j+1}=\sigma\,,\quad |c_{\geq j+2}|\leq r^+_j e^{-2\sqrt 3 T}\,,
 $$
 $R_j^-=T^{A_j^+}$ and the semi-metric is
 $$
 d_j^+ ( x , \tilde x ) : = e^{2\sqrt{3}T} |c_{\leq j-1}-\tilde c_{\leq j-1}|+e^{4\sqrt{3}T}  |c_{ j+1}^--\tilde c_{ j+1}^-|+
 $$
 $$
 e^{\sqrt{3}T}  |c_{ j+1}^++\tilde c_{ j+1}^+|+
 e^{3\sqrt{3}T} |c_{ \geq j+2}-\tilde c_{ \geq j+2}|
 $$.
\end{itemize}
By Section 3.5 of \cite{CKSTT} Proposition \ref{prop.slider} follows from 
\begin{proposition}\label{covering}
$(M_j^-,d_j^-,R_j^-)\twoheadrightarrow (M_j^0,d_j^0,R_j^0)$ for all $3 < j \leq  N-2$, $ (M_j^0,d_j^0,R_j^0)\twoheadrightarrow(M_j^+,d_j^+,R_j^+)$  for all $3 \leq  j < N-2$,
$(M_j^+,d_j^+,R_j^+)\twoheadrightarrow (M_{j+1}^- ,d_{j+1}^- ,R_{j+1}^-)$ for all $3\leq j <N-2$.
\end{proposition}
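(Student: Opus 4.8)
The plan is to follow the proof of Section 3 of \cite{CKSTT} verbatim, replacing the cubic-NLS toy model equations (3.17)--(3.19) there by our equations \eqref{bruttone}, \eqref{cacca3}, \eqref{cacca4}. The three covering statements in Proposition \ref{covering} correspond respectively to: (a) following the linearized hyperbolic-elliptic flow near $\T_j$ from the incoming target to the ricochet target; (b) passing through the ricochet point very close to $\T_j$; and (c) shadowing the heteroclinic connection from a neighborhood of $\T_j$ to a neighborhood of $\T_{j+1}$. In all three steps the structure of the argument is the same: one picks, for each $x_2$ in the final target, the obvious candidate $x_1$ in the initial target (obtained by setting the relevant trailing/leading coordinates to their prescribed values and letting the hyperbolic coordinates evolve under the linear flow $c^{\pm}\mapsto e^{\pm\sqrt3 t}c^{\pm}$), and then one controls the deviation of the true nonlinear flow from this linear prediction by repeated applications of Gronwall's lemma on each block of coordinates, using the bound $O(c^2)\lesssim J=O(1)$ and the block-triangular structure of \eqref{bruttone} (the equation for $c^-_{j-1}$ only feeds back into itself and into $c^+_{j-1}$ through higher-order terms, etc.).

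Concretely, for step (a) I would start a solution within $R_j^-$ of a point of $M_j^-$, so that initially $c^-_{j-1}\approx\sigma$, $c^+_{j-1}$ is exponentially small, the trailing peripheral modes $c_{\leq j-2}$ are $O(e^{-2\sqrt3 T})$, and the leading modes $c_{\geq j+1}$ are $O(r_j^- e^{-2\sqrt3 T})$. Over a time of order $T$ the coordinate $c^-_{j-1}$ contracts like $e^{-\sqrt3 t}$, the elliptic peripheral modes $c^*$ merely rotate (their moduli changing only by $O(c^2)$ factors), and $c^+_{j+1}$ expands like $e^{\sqrt3 t}$ from its tiny initial value; the semi-metrics $d_j^-$, $d_j^0$ are precisely the anisotropic weights ($e^{k\sqrt3 T}$ with $k=1,2,3,4$ on the various blocks) that make these Gronwall estimates close. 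Choosing $T$ large enough compared to the $r$'s, the $r$'s large compared to the $A$'s, and $\sigma$ small enough, one lands within $R_j^0=T^{A_j^0}$ of the prescribed point of $M_j^0$. Steps (b) and (c) are analogous: (b) is a short-time passage controlled by the same linear picture with the roles of $(c^-_{j-1},c^+_{j-1})$ frozen near zero; (c) uses equations \eqref{cacca3}, \eqref{cacca4}, which show that along the heteroclinic orbit $c_{j+1}$ moves on a straight line (the subspaces $c^+_{j+1}=0$ and $c^-_{j+1}=0$ being invariant for the two-generation dynamics), so the true trajectory stays within the target tube for the full transit time.

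The one genuinely new feature compared to \cite{CKSTT} is that our remainder terms are not homogeneous of degree three: by \eqref{bruttine} each $O(c^2 c^{\pm})$ contains an additional $\OO(c^4 c^{\pm})$ piece. But since $|c|\lesssim 1$ on $\Sigma$, the degree-five term is dominated by the degree-three one, so every Gronwall bound used in \cite{CKSTT} still holds with the same exponents; this is why the factor $2J^2\sqrt{(9n-8)(3n-4)}$ (rescaled to $\sqrt3$) can simply replace $\sqrt3$ of \cite{CKSTT} throughout. Likewise the fact that the elliptic rotation speed $\kappa$ in \eqref{bruttone5} differs from the cubic value is immaterial, because only the \emph{moduli} of the peripheral modes enter the targets. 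Hence the proof goes through after checking, step by step, that the Gronwall inequalities of Section 3 of \cite{CKSTT} are unaffected by these changes.

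I expect the main obstacle to be purely expository rather than mathematical: the proof in \cite{CKSTT} is long and technical, and the only way to be sure it adapts is to go through every estimate and verify that the schematic bounds \eqref{bruttone}--\eqref{cacca4} — with their extra degree-five terms and modified constants — still feed the same Gronwall arguments. For this reason I would carry out the detailed verification in an appendix (see Appendix \ref{slider-sol}), reproducing the argument of \cite{CKSTT} and flagging precisely the places where the degree-five remainders or the $n$-dependent constants require a (trivial) modification, while leaving the bulk of the combinatorial and analytic bookkeeping to the reader via the reference.
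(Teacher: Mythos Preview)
Your proposal is correct and takes essentially the same approach as the paper: defer to the proof in Section~3 of \cite{CKSTT}, observing that the only structural changes --- the extra degree-five remainders in \eqref{bruttine} and the $n$-dependent constants --- are harmless because $|c|\lesssim 1$ on $\Sigma$ and only the moduli of the elliptic modes enter the targets. One minor correction to your informal description: you have the time scales of steps (b) and (c) reversed --- the ricochet-to-outgoing passage (b) is, like (a), a time-$T$ flow near $\T_j$ governed by the linearized hyperbolic dynamics, whereas the outgoing-to-next-incoming transit (c) along the heteroclinic is the short step, lasting only $O(\log(1/\sigma))$ and controlled via the explicit scalar ODE $\dot{\hat g}=\sqrt{3}(1-\hat g^2)\hat g$; this does not affect your argument since you correctly plan to follow \cite{CKSTT} step by step.
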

\begin{proof}
See Appendix \ref{slider-sol}.
\end{proof}
\begin{proof}[Proof of Proposition \ref{prop.slider}]
By \cite{CKSTT} Lemma 3.1  we deduce the covering relations 
\begin{equation}\label{ceppa}
( M_3^0 , d_3^0 , R_3^0 ) \twoheadrightarrow ( M_{N-2}^0 , d_{N-2}^0, R_{N-2}^0 ),
\end{equation} in turn this implies that  there is at least one solution $b(t)$ to (3.1) which starts within the ricochet target $(M_3^0,d_3^0,R_3^0)$ at some time $t_0$ and ends up within the ricochet target $( M_{N-2}^0 , d_{N-2}^0, R_{N-2}^0 )$  at some later time $t_1 > t_0$. But from the definition of these targets, we thus see that $b(t_0)$ lies within a distance $O(r_3^0e^{-\sqrt3 T} )$ of $\T_3$, while $b(t_1)$ lies within a distance $O(r_{N-2}^0e^{-\sqrt3 T} )$ of $\T_{N-2}$. The claim follows.
\end{proof}

\section{Construction of the set $\SS$}\label{very.big.s}

\subsection{The density argument and the norm explosion property}
\label{density}

The perturbative argument for the construction of the frequency set $\mathcal S$ works exactly as in \cite{CKSTT}, Section 4. However, for the convenience of the reader, we recall here the main points.

A convenient way to construct a generation set is to first fix a ``genealogical tree'', i.e. an abstract combinatorial model of the parenthood and brotherhood relations, and then to choose a placement function, embedding this abstract combinatorial model in $\R^2$. Our choice of the abstract combinatorial model is  the one described in \cite{CKSTT} pp. $99$-$100$.
 \begin{figure}[!ht]
\centering
\begin{minipage}[b]{11cm}
\centering
{\psfrag{a}{\small {first generation}}
\psfrag{b}{\small second generation}
\psfrag{c}{\small third generation}
\psfrag{d}{\small fourth generation}
\psfrag{e}{\small fifth generation}
\psfrag{f}[c]{$4$}
\psfrag{i}[c]{$2$}
\psfrag{h}[c]{$3$}
\psfrag{g}[c]{$6$}
\includegraphics[width=11cm]{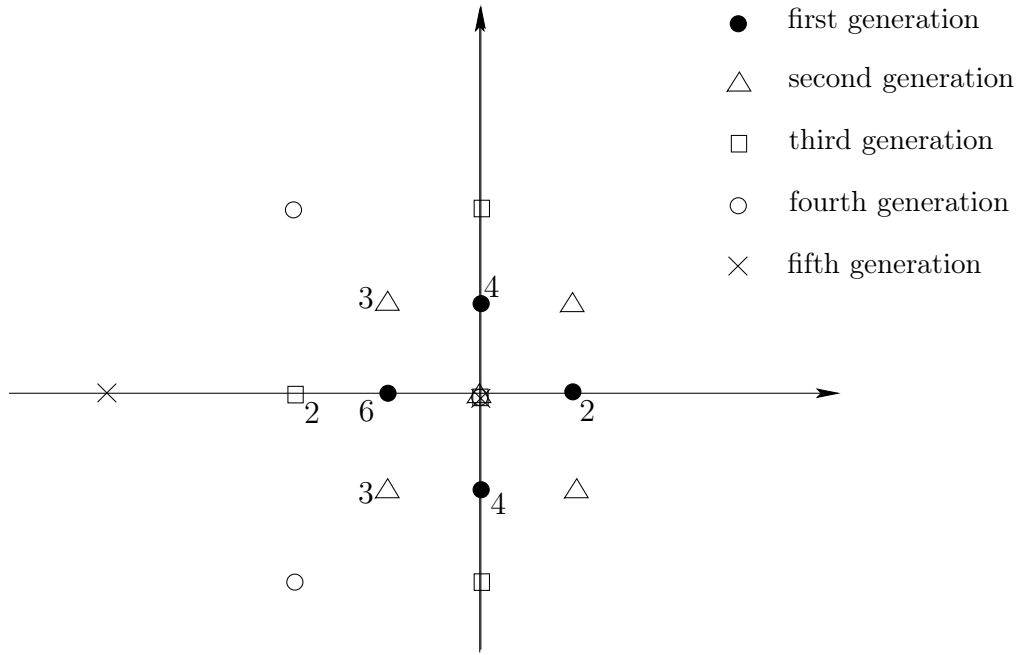}
}
\caption{\footnotesize{The prototype embedding with five generations. Note that this is a highly degenerate realization of  the abstract combinatorial model of \cite{CKSTT}. Since $N=5$, each generation contains $16$ points; we have explicitly written the multiplicity of each point when it is not one. In zero there are: $0$ points of the first generation, $8$ points of the second, $12$ of the third, $14$ of the fourth and $15$ points of the fifth generation. }}\label{fig2}
\end{minipage}
\end{figure}
Then, once the combinatorial model is fixed, the choice of the embedding in $\R^2$ is equivalent to the choice of the following free parameters:
\begin{itemize}
\item the placement of the first generation $\SS_1$ (which implies the choice of a parameter in $\mathbb R^{2^N}$);
\item the choice of a procreation angle $\vartheta^{\FF}$ for each family of the generation set (which globally implies the choice of a parameter in $\mathbb T^{(N-1)2^{N-2}}$, since $(N-1)2^{N-2}$ is the number of families).
\end{itemize}
We  denote the corresponding generation set by $\SS(\SS_1,\vartheta^{\FF})$ and the space of parameters by $\XX:=\mathbb R^{2^N}\times\mathbb T^{(N-1)2^{N-2}}$.

In Section \ref{genset} we will prove that the set of parameters producing {\em degenerate} generation sets is contained in a  closed set of null measure in $\XX$.

We claim that the set of $(\SS_1,\vartheta^{\FF})\in\XX$ such that $\SS(\SS_1,\vartheta^{\FF})\subset\Q^2\setminus\{0\}$ is dense in $\XX$. This is a consequence of two facts:
\begin{itemize}
\item the density of $\Q^2\setminus\{0\}$ in $\R^2$ (for the placement of the first generation);
\item the density of (non-zero) rational points on circles having a diameter with rational endpoints.
\end{itemize}

These two remarks imply that the set of $(\SS_1,\vartheta^{\FF})\in\XX$ such that $\SS(\SS_1,\vartheta^{\FF})$ is non-degenerate {\em and} $\SS(\SS_1,\vartheta^{\FF})\subset\Q^2\setminus\{0\}$ is dense in $\XX$.

In order to prove the growth of Sobolev norms, we  require a further property on the generation set $\SS$, i.e. the {\em norm explosion property}
\begin{equation}\label{norm.expl}
\sum_{k\in\SS_{N-2}}|k|^{2s}>\frac{1}{2}2^{(s-1)(N-5)}\sum_{k\in\SS_3}|k|^{2s}\ .
\end{equation}
Given $N\gg1$, our aim is to prove the existence of a non degenerate generation set $\SS\subset\Q^2\setminus\{0\}$ satisfying \eqref{norm.expl}. The fact that \eqref{norm.expl} is an open condition on the space of parameters $\XX$, together with the above remarks, implies that it is enough to prove the existence of a  (possibly degenerate) generation set $\SS\subset\R^2$ satisfying \eqref{norm.expl}, which is achieved by the {\em prototype embedding} described in \cite{CKSTT}, pp. 101--102 (see Figure \ref{fig2}).

\begin{remark}\label{Nn}
Note that, for any given positive integer $\ell$, the function $F:\mathbb S^{\ell-1}\to\mathbb R$, where
$$
\mathbb S^{\ell-1}=\left\{(x_1,\ldots,x_\ell)\in\mathbb R^\ell\middle|\sum_{i=1}^\ell x_i^2=1)\right\}\ ,
$$
defined by
$$
F(x_1,\ldots,x_\ell)=\sum_{i=1}^\ell x_i^{2s}
$$
attains its minimum (since $s>1$) at $$(x_1,\ldots,x_\ell)=(\ell^{-1/2},\ldots,\ell^{-1/2})$$ and its maximum at $$(x_1,x_2,\ldots,x_\ell)=(1,0,\ldots,0)\ .$$

From this one deduces that for each family $\FF$ with parents $v_1,v_2$ and children $v_3,v_4$ one must have
$$
\frac{|v_3|^{2s}+|v_4|^{2s}}{|v_1|^{2s}+|v_2|^{2s}}\leq 2^{s-1}
$$
and therefore, for all $1\leq i\leq N-1$,
$$
\frac{\sum_{k\in\SS_{i+1}}|k|^{2s}}{\sum_{k\in\SS_i}|k|^{2s}}\leq 2^{s-1}\ .
$$
which implies
$$
\frac{\sum_{k\in\SS_{j}}|k|^{2s}}{\sum_{k\in\SS_i}|k|^{2s}}\leq 2^{(s-1)(j-i)}\ .
$$
for all $1\leq i\leq j\leq N$. This means that we have to choose $N$ large if we want the ratio
$$
\frac{\sum_{k\in\SS_{N-2}}|k|^{2s}}{\sum_{k\in\SS_3}|k|^{2s}}
$$
to be large.

Moreover, since
$$
F(\ell^{-1/2},\ldots,\ell^{-1/2})=\ell^{-s+1}\,,\qquad F(1,0,\ldots,0)=1
$$
we have for all $1\leq i,j\leq N$
$$
\frac{\sum_{k\in\SS_{j}}|k|^{2s}}{\sum_{k\in\SS_i}|k|^{2s}}\leq n^{s-1}\ .
$$
which implies that also $n$ has to be chosen large enough.

In this sense the prototype embedding and the choice $n=2^{N-1}$ are optimal, because they attain the maximum possible growth of the quantity $\sum_{k\in\SS_i}|k|^{2s}$ both at each step and between the first and the last generation.
\end{remark}

Then, once we are given a non-degenerate generation set contained in $\Q^2\setminus\{0\}$ and satisfying \eqref{norm.expl}, it is enough to multiply by any integer multiple of the least common denominator of its elements in order to get a non-degenerate generation set $\SS\in\Z^2\setminus\{0\}$ and satisfying \eqref{norm.expl} (note that \eqref{norm.expl} is invariant by dilations of the set $\SS$). Note that we can dilate $\SS$ as much as we wish, so we can make $\min_{k\in\SS}|k|$ as large as desired.

These considerations are summarized by the following proposition (the analogue of Proposition 2.1 in \cite{CKSTT}).
\begin{proposition}\label{prop.gen.set}
For all $K,\delta,\RR>0$, there exist $N\gg1$ and a non-degenerate generation set $\SS\subset\Z^2$ such that
\begin{equation}
\frac{\sum_{k\in\SS_{N-2}}|k|^{2s}}{\sum_{k\in\SS_3}|k|^{2s}}\gtrsim\frac{K^2}{\delta^2}
\end{equation}
and such that
\begin{equation}
\min_{k\in\SS}|k|\geq\RR\ .
\end{equation}
\end{proposition}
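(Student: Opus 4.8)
The plan is to assemble Proposition \ref{prop.gen.set} directly from the ingredients collected in Subsection \ref{density}, reducing everything to a single statement about the prototype embedding. First I would fix $N$ large (to be specified at the end) and, following \cite{CKSTT}, fix the abstract genealogical tree with $n=2^{N-1}$ elements per generation and $N$ generations; the space of embeddings is then $\XX=\mathbb R^{2^N}\times\mathbb T^{(N-1)2^{N-2}}$, where a point $(\SS_1,\vartheta^{\FF})$ records the placement of the first generation and the procreation angles of all families. This parametrization is the one described in the excerpt.

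Next I would invoke the three facts already stated: (a) the set of parameters giving \emph{degenerate} generation sets lies in a closed null set in $\XX$ (proved in Section \ref{genset}); (b) the set of $(\SS_1,\vartheta^{\FF})$ with $\SS(\SS_1,\vartheta^{\FF})\subset\Q^2\setminus\{0\}$ is dense in $\XX$, using density of $\Q^2$ in $\R^2$ and density of nonzero rational points on circles with rational-endpoint diameters; and (c) the norm explosion property \eqref{norm.expl} is an \emph{open} condition on $\XX$. Combining (a), (b), (c): the non-degenerate parameters form the complement of a closed null set, hence are dense; intersecting with the dense rational-embedding set (b) and with the open set (c) — provided (c) is nonempty — yields a non-degenerate generation set $\SS\subset\Q^2\setminus\{0\}$ satisfying \eqref{norm.expl}. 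That \eqref{norm.expl} holds for \emph{some} (possibly degenerate, possibly irrational) embedding is exactly the content of the prototype embedding of \cite{CKSTT}, pp.~101--102 (Figure \ref{fig2}), which I would cite rather than reprove; by openness this persists on a neighborhood, which meets the dense non-degenerate rational set.

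Then I would translate \eqref{norm.expl} into the quantitative bound in the statement. By Remark \ref{Nn}, the ratio $\sum_{k\in\SS_{N-2}}|k|^{2s}/\sum_{k\in\SS_3}|k|^{2s}$ is at most $2^{(s-1)(N-5)}$, and the norm explosion property \eqref{norm.expl} forces it to be at least $\tfrac12 2^{(s-1)(N-5)}$; since $s>1$ this grows without bound as $N\to\infty$, so choosing $N$ large enough (depending on $K,\delta$) makes the ratio $\gtrsim K^2/\delta^2$. Finally, to arrange $\min_{k\in\SS}|k|\geq\RR$, I would clear denominators: multiplying the rational set $\SS\subset\Q^2\setminus\{0\}$ by a large enough integer multiple of the least common denominator of its coordinates produces a set in $\Z^2\setminus\{0\}$; since both non-degeneracy (a combinatorial/linear-algebraic condition invariant under dilation) and \eqref{norm.expl} (manifestly invariant under dilation) are preserved, and the dilation factor can be taken arbitrarily large, this simultaneously achieves $\min_{k\in\SS}|k|\geq\RR$.

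The only genuinely substantive input is the existence of a prototype embedding realizing \eqref{norm.expl}, together with the null-measure statement for degenerate sets from Section \ref{genset}; everything else is a soft density/openness argument and a dilation. Thus the main obstacle is not in this proposition itself but is deferred: it is the combinatorial verification, carried out in Section \ref{genset}, that the bad (degenerate) parameter set is closed and of measure zero — which in the quintic case requires the careful classification of resonant vectors (types $\pm\lambda^{\FF}$ and CF) encoded in Definitions \ref{def.CF} and \ref{def.ND}. Here I would simply record that once that analysis is in place, Proposition \ref{prop.gen.set} follows by the chain of reductions above.
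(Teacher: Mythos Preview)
Your proposal is correct and follows essentially the same approach as the paper: the proposition is stated there precisely as a summary of the preceding discussion in Subsection \ref{density}, which assembles the same ingredients (null-measure degeneracy from Section \ref{genset}, density of rational embeddings, openness of \eqref{norm.expl}, the prototype embedding, and the final dilation) in the same order. There is no separate proof in the paper beyond that discussion, so your write-up is effectively a cleaned-up version of it.
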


\subsection{Genericity of non-degenerate generation sets}
\label{genset}

The main result of this section is the following.
\begin{proposition}\label{prop.nondeg}
There exists a closed set of zero measure $\mathcal D\subset\XX$ such that the generation set $\SS(\SS_1,\vartheta^{\FF})$ is non-degenerate for all $(\SS_1,\vartheta^{\FF})\in\XX\setminus\mathcal D$.
\end{proposition}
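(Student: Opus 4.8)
The plan is to show that the ``bad'' set of parameters in $\XX$ is contained in a finite union of proper real-analytic subvarieties, hence closed and of zero Lebesgue measure. First I would fix the combinatorial model of the genealogical tree (the one from \cite{CKSTT}, pp.~99--100), so that each element of $\SS(\SS_1,\vartheta^{\FF})$ is given by an \emph{explicit} function of the parameters $(\SS_1,\vartheta^{\FF})\in\XX$: the placement of $\SS_1$ is linear in the first block of coordinates, and passing from parents to children via the procreation angle $\vartheta^{\FF}$ is given by rotating a chord of the circumscribed circle, so each coordinate of each $v\in\SS$ is a (real-)analytic function of the parameters (rational in $\SS_1$, trigonometric-polynomial in the angles). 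In particular the conditions appearing in Definition~\ref{def.ND} --- equations of the shape $\sum_i\lambda_i|v_i|^2-|\sum_i\lambda_i v_i|^2=0$ for the finitely many integer vectors $\lambda$ with $\sum_i\lambda_i=1$, $|\lambda|\le 5$ --- are each an analytic equation $P_\lambda(\SS_1,\vartheta^{\FF})=0$ on $\XX$.

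Next I would enumerate the finitely many ``forbidden'' events. A generation set fails to be non-degenerate exactly when some such $\lambda$ produces a vanishing $P_\lambda$ but $\lambda$ is \emph{not} of one of the four allowed types in Definition~\ref{def.ND}. I would let
$$
\mathcal D:=\bigcup_{\lambda\ \text{forbidden}}\{(\SS_1,\vartheta^{\FF})\in\XX : P_\lambda(\SS_1,\vartheta^{\FF})=0\},
$$
the union being over the finite set of integer vectors $\lambda$ with $\sum_i\lambda_i=1$, $|\lambda|\le5$ that are not of allowed type (and similarly one must record the degeneracy of the rectangles themselves, i.e.\ collinearity or coincidence of the four vertices of a family, which is again an analytic condition). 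Each set in the union is a real-analytic subvariety of $\XX$, hence closed; its complement is either empty or dense, and it has zero measure \emph{provided} $P_\lambda$ does not vanish identically on $\XX$. So $\mathcal D$ is closed of zero measure once we verify this non-identical-vanishing for every forbidden $\lambda$, and by construction $\SS(\SS_1,\vartheta^{\FF})$ is non-degenerate for all $(\SS_1,\vartheta^{\FF})\in\XX\setminus\mathcal D$.

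The heart of the proof --- and the main obstacle --- is therefore the combinatorial case analysis showing that for each forbidden $\lambda$ the analytic function $P_\lambda$ is not identically zero on $\XX$. This is exactly point (i) of the Introduction and is acknowledged there to be ``significantly more complicated than the cubic case.'' I would organize it by the value of $|\lambda|\in\{1,3,5\}$ and by the ``shape'' of the support of $\lambda$ relative to the family structure (how many generations it meets, whether the involved vectors are parents/children/spouses/siblings, whether $\lambda$ or $\lambda-v$ looks like $\pm\lambda^{\FF}$ or a type-CF vector): in each case one must exhibit a single choice of $(\SS_1,\vartheta^{\FF})$ --- or, more robustly, compute a partial derivative with respect to one procreation angle or one coordinate of $\SS_1$ --- for which $P_\lambda\neq0$. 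The key leverage is that the procreation angles are free and independent across families, so moving a single $\vartheta^{\FF}$ moves the corresponding children (and all their descendants) while fixing everything else; a resonance $P_\lambda=0$ that is not forced by the family relations must break under such a generic perturbation. One also uses here the Remark that the $\lambda^{\FF}$ are linearly independent, to rule out accidental algebraic identities among the allowed relations. Carrying this out requires the careful classification of resonances (type CF and the list in Definition~\ref{def.ND}) and is the genuinely technical part; I would present the generic cases in the body and defer the remaining bookkeeping, following \cite{CKSTT}, to an appendix.
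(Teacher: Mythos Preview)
Your outline is correct and matches the paper's strategy: the bad set is a finite union of zero loci of real-analytic functions $P_\lambda$ on $\XX$, and the work is to show none of these vanishes identically. The paper executes the case analysis with a slightly sharper organization than you sketch: rather than treating each forbidden $\lambda$ globally, it proceeds \emph{inductively}, adding one family $\FF=(p_1,p_2;c_1,c_2)$ at a time and writing any relevant $\lambda$ as $A\lambda_{c_1}+B\lambda_{c_2}+\mu$ with $\mu$ supported on already-placed points. The key computation (Lemma~\ref{lemma.const}) shows that the non-degeneracy function, viewed as a function of the new procreation angle alone, is non-constant unless $A=B$ or the linear relation $(A+B-1)(p_1+p_2)+2\sum_j\mu_jv_j=0$ holds. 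The second alternative is then fed back into the induction hypothesis via Lemma~\ref{lemma.identities} (generic non-vanishing of linear relations not spanned by the $\lambda^{\FF}$) together with the combinatorial Lemmas~\ref{lemma.3.8} and~\ref{lemma.boh}, which classify when such a formal identity can occur. This inductive reduction to a \emph{linear} condition on the earlier-placed points is what makes the bookkeeping finite and explicit; your ``compute a partial derivative in one procreation angle'' is exactly this idea, but you should be aware that in the symmetric case $A=B$ that derivative vanishes identically and one genuinely needs the induction hypothesis rather than a single well-chosen parameter value.
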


First, we need a lemma ensuring that any linear relation among the elements of the generation set that is not a linear combination of the family relations is generically not fulfilled.

\begin{lemma}\label{lemma.identities}
Let $\mu\in\mathbb Z^{N2^{N-1}}$, $i=1,\ldots,M$ be an integer vector, linearly independent from the subspace of $\mathbb R^{N2^{N-1}}$ generated by all the vectors $\lambda^{\FF}$ associated to the families. Then,  there for an open set of full measure $\mathfrak S\subset\XX$, one has that if $(\SS_1,\vartheta^{\FF})\in\mathfrak S$, then $\SS(\SS_1,\vartheta^{\FF})$ is such that
\begin{equation}\label{sigma}
\sum_{j=1}^{N2^{N-1}}\mu_{j}v_j\neq 0\ .
\end{equation}
\end{lemma}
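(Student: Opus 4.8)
\textbf{Proof strategy for Lemma \ref{lemma.identities}.}

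The plan is to view the left-hand side of \eqref{sigma} as a non-trivial analytic (in fact polynomial) function of the free parameters $(\SS_1,\vartheta^{\FF})\in\XX$ and invoke the fact that the zero set of a non-trivial real-analytic function is closed with empty interior (hence of zero Lebesgue measure). Concretely, fix the genealogical tree; then each vertex $v_j\in\R^2$ is obtained from the first-generation placements by successively applying the family construction, so $v_j$ is an explicit function of $\SS_1\in\R^{2^N}$ and of the procreation angles $\vartheta^{\FF}$. Since a child in a family is obtained from its parents by reflecting across the axis of the rectangle (or equivalently by a rotation by the procreation angle composed with translations/reflections), each coordinate of $v_j$ depends affinely on $\SS_1$ and polynomially (through $\cos\vartheta^{\FF},\sin\vartheta^{\FF}$) on the angles. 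Hence $\Phi(\SS_1,\vartheta^{\FF}):=\sum_j \mu_j v_j$ is a real-analytic $\R^2$-valued map on $\XX$, and \eqref{sigma} fails exactly on the zero set $\mathcal Z:=\Phi^{-1}(0)$. It therefore suffices to show $\Phi\not\equiv 0$; then $\mathcal Z$ is closed with empty interior and Lebesgue-null, so $\mathfrak S:=\XX\setminus\mathcal Z$ is open of full measure, which is the claim.

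The core of the argument is thus the non-vanishing $\Phi\not\equiv 0$, equivalently exhibiting one parameter value where $\sum_j\mu_j v_j\neq 0$. Here I would use the linear independence hypothesis. The relations that hold identically in $(\SS_1,\vartheta^{\FF})$ among the $v_j$ are precisely the integer linear combinations of the family relations $\lambda^{\FF}$: indeed $\sum_i\lambda^{\FF}_i v_i=v_1+v_2-v_3-v_4=0$ holds for every family by construction and for every choice of parameters, and conversely, differentiating a hypothetical identity $\sum_j c_j v_j(\SS_1,\vartheta^{\FF})=0$ with respect to the angles $\vartheta^{\FF}$ (one family at a time, starting from the youngest generation and moving up the tree) forces the coefficient pattern to be a combination of the $\lambda^{\FF}$'s — this is where the tree structure, and the fact that each $v\in\SS_i$ belongs to exactly one family of each relevant age (Definition \ref{def.gen}), is used. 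Since $\mu$ is assumed linearly independent from $\mathrm{span}\{\lambda^{\FF}\}$, it is not such a combination, so $\Phi$ cannot vanish identically. Alternatively, and perhaps more cleanly, one can argue directly: the span of $\{\lambda^{\FF}\}$ is exactly the space of coefficient vectors annihilating the generic placement, so if $\mu\notin\mathrm{span}\{\lambda^{\FF}\}$ there is a placement on which $\sum_j\mu_j v_j\neq0$.

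The main obstacle I anticipate is the second step — proving that the only identical linear relations among the $v_j$ are the family relations. This requires a careful induction over the generations exploiting the combinatorial structure of the genealogical tree: one must track how varying a single procreation angle $\vartheta^{\FF}$ moves exactly the children in that family (and, through them, their descendants), and argue that these motions are independent enough to kill any coefficient vector outside $\mathrm{span}\{\lambda^{\FF}\}$. The bookkeeping is delicate because a vertex has both a parent-family and a child-family, so moving an angle high in the tree propagates downward; organizing the differentiation from the leaves upward (or choosing a generic placement of $\SS_1$ in ``general position'' so that no accidental cancellations occur) is the technical heart. Once that is in place, the measure-theoretic conclusion is immediate from the analyticity of $\Phi$.
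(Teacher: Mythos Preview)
Your overall strategy --- show that $\Phi(\SS_1,\vartheta^{\FF}):=\sum_j\mu_j v_j$ is real-analytic and not identically zero, then invoke the fact that the zero set of a non-trivial real-analytic map is closed and Lebesgue-null --- is sound and constitutes a genuinely different route from the paper's. One correction: the dependence of $v_j$ on $\SS_1$ is \emph{not} affine, because placing the children of a family at $\tfrac{p_1+p_2}{2}\pm\tfrac{|p_1-p_2|}{2}e^{i\vartheta^{\FF}}$ brings in the non-polynomial factor $|p_1-p_2|$; the map is only real-analytic on the open full-measure set where all parent pairs are distinct, which suffices for your argument but should be stated correctly. The paper instead argues constructively: it puts the matrix of family relations in lower row echelon form, splits the elements of $\SS$ into ``pivots'' $p_j$ (one child per family) and free variables $w_\ell$ (the first generation together with the other child of each family), and uses the family relations to rewrite $\sum_j\mu_j v_j=\sum_\ell\nu_\ell w_\ell$, with $\nu\neq0$ exactly when $\mu\notin\mathrm{span}\{\lambda^{\FF}\}$. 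Taking $\bar\ell=\max\{\ell:\nu_\ell\neq0\}$, either $w_{\bar\ell}$ lies in the first generation (and is free in $\R^2$) or it is constrained to a circle of positive radius; in either case the relation \eqref{sigma} fails on at most one point of that circle. Your inductive differentiation sketch (for the youngest generation in the support, $\partial_{\vartheta^{\FF}}\Phi\equiv0$ forces the two children of $\FF$ to carry equal coefficients, then add a multiple of $\lambda^{\FF}$ to push the support down one generation and recurse) does work and is essentially dual to the paper's echelon reduction; the paper's version buys an explicit description of the exceptional set rather than an appeal to the analytic black box, while yours is more conceptual and slightly shorter once the analyticity is properly justified.
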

\begin{proof}
We denote the elements of $\SS$ by $v_1,\ldots,v_{|\SS|}$, with $|\SS|=N2^{N-1}$. For simplicity and without loss of generality, we order the $v_j$'s so that couples of siblings always have consecutive subindices.


For each family $\FF$, both the linear and the quadratic relations
$$\sum_j \lambda_j^{\FF} v_j=0\,,\quad \sum_j \lambda_j^{\FF} |v_j|^2=0$$
are satisfied. The coefficients of the linear relations can be collected in a matrix $\Lambda^{\FF}$ with $(N-1)2^{N-2}$ rows (as many as the number of families) and $N2^{N-1}$ columns (as many as the elements of $\SS$), so that the linear relations become $$\Lambda^{\FF}v=0\ .$$ We choose to order the rows of $\Lambda^{\FF}$ so that the matrix is in {\em lower row echelon} form (see figure).

$$\begin{array}{c  c c c c c c c c c c c } w_1& w_2& w_3& w_4& w_5& p_5& w_6 & p_6 & w_7 & p_7 & w_8& p_8\\ \\ 1 &1&0&0&-1&-1&\!\! \!\!\!\vline \quad0&0&0&0  &0&0 \\ \cline{7-8}
0&0&1&1&0&0&-1&-1 &\!\! \!\!\!\vline \quad 0&0  &0&0\\ \cline{9-10} 0&0&0&0&1&0&1&0&-1&-1& \!\! \!\!\!\vline \quad 0&0 \\ \cline{11-12}0&0&0&0&0&1&0&1&0&0  &-1&-1 \end{array}
$$

Each row of a matrix in lower row echelon form has a {\em pivot}, i.e. the first nonzero coefficient of the row starting from the right. Being in lower row echelon form means that the pivot of a row is always strictly to the right of the pivot of the row above it. In the matrix $\Lambda^{\FF}$, the pivots are all equal to $-1$ and they correspond to one and only one of the child from each family. In order to use this fact, we accordingly rename the elements of the generation set by writing $v=(p,w)\in\R^{2a}\times\R^{2b}$, with $a=(N-1)2^{N-2}$, $b=N2^{N-1}-a=(N+1)2^{N-2}$, where the $p_j\in\R^2$ are the elements of the generation set corresponding to the pivots and the $w_{\ell}\in\R^2$ are all the others, i.e. all the elements of the first generation and one and only one child (the non-pivot) from each family. Here, the index $\ell$ ranges from $1$ to $b$, while the index $j$ ranges from $2^{N-1}+1$ to $b$ (note that $a+2^{N-1}=b$), so that a couple $(p_j,w_{\ell})$ corresponds to a couple of siblings if and only if $j=\ell$. Then, the linear relations $\Lambda^{\FF}v=0$ can be used to write each $p_j$ as a linear combination of the $w_\ell$'s with $\ell\leq j$ only:
\begin{equation}\label{pivot}
p_j=\sum_{\ell\leq j}\eta_{\ell}w_{\ell}, \quad \eta_{\ell}\in\Q\ .
\end{equation}
Finally, the quadratic relations $\Lambda^{\FF}|v|^2=0$ constrain each $w_{\ell}$ with $\ell>2^{N-1}$ (i.e. not in the first generation) to a circle depending on the $w_j$ with $j<\ell$; note that this circle has positive radius provided that the parents of $w_{\ell}$ are distinct.
Then, eq. \eqref{pivot} implies that the l.h.s. of \eqref{sigma} can be rewritten in a unique way as a linear combination of the $w_{\ell}$'s only, so we have
\begin{equation}\label{sigma2}
\sum_{\ell=1}^b\nu_\ell w_\ell=0\ .
\end{equation}
Hence, the assumption that $\mu$ is linearly independent from the space generated by the $\lambda^{\FF}$'s is equivalent to the fact that $\nu\in\R^{2b}$ does not vanish.

Now, let $$\bar\ell:=\max\left\{\ell\ \middle|\ \nu_\ell\neq0\right\}$$
so that \eqref{sigma2} is equivalent to
\begin{equation}\label{sigma3}
w_{\bar\ell}=-\frac{1}{\nu_{\bar\ell}}\sum_{\ell<\bar\ell}\nu_\ell w_\ell
\end{equation}
If $\bar\ell\leq 2^{N-1}$, then $w_{\bar\ell}$ is in the first generation. Since there are no restrictions (either linear or quadratic) on the first generation, the statement is trivial. Hence assume $\bar\ell>2^{N-1}$. We can assume (by removing from $\XX$ a closed subset of zero measure) that $v_h\neq v_k$ for all $h\neq k$. Then the quadratic constraint on $w_{\bar\ell}\in\R^2$ gives a circle of positive radius. By excluding at most one point this circle, we can ensure that the relation \eqref{sigma3} is not fulfilled, which proves the thesis of the lemma.

\end{proof}

In view of Lemma \ref{lemma.identities}, those vectors $\mu\in\mathbb Z^{N2^{N-1}}$ that are linear combinations of the family vectors assume a special importance, since that is the only case in which the relation $\sum\mu_i v_i=0$ cannot be excluded when constructing the set $\SS$. In that case, we will refer to $\mu\sim0$ as a \textit{formal identity}. In general, we will write $\mu\sim\lambda$ whenever the vector $\mu-\lambda$ is a linear combination of the family relations.

We introduce some more notation: given a vector $\lambda\in\mathbb Z^{N2^{N-1}}$, we denote by $\pi_j\lambda$ the projection of $\lambda$ on the space generated by the members of the $j$-th generation. Now, let $R_{\alpha}=\sum_i\alpha_i\lambda^{\FF_i}$ be a linear combination with integer coefficients of the family vectors. We denote by $n_{R_{\alpha}}$ the number of families on which the linear combination is supported, i.e. the cardinality of $\{i|\alpha_i\neq0\}$. Moreover, we denote by $n_{R_{\alpha}}^k$ the number of families of age $k$ on which $R_{\alpha}$ is supported, i.e. the cardinality of $$\{i|\alpha_i\neq0\ \text{and}\ \FF_i\ \text{is a family of age}\ k\}\ .$$ Finally, we denote respectively by $m_{R_{\alpha}}$ and $M_{R_{\alpha}}$ the minimal and the maximal age of families on which $R_{\alpha}$ is supported. Then, we make the two following simple remarks.

\begin{remark}\label{remark.1}
If $n_{R_{\alpha}}^k=n_{R_{\alpha}}^{k+1}=1$, then $\pi_{k+1}R_{\alpha}$ is supported on at least two distinct elements.
\end{remark}

\begin{remark}\label{remark.2}
If $n_{R_{\alpha}}^k\neq n_{R_{\alpha}}^{k+1}$, then $\pi_{k+1}R_{\alpha}$ is supported on at least two distinct elements.
\end{remark}

Before proving the main result of this section, we need some lemmas.

\begin{lemma}\label{lemma.3.8}
If $n_{R_{\alpha}}\geq3$, then $R_{\alpha}$ is supported on at least 8 distinct elements.
\end{lemma}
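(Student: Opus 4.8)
The plan is to argue by analyzing the combinatorial structure of a linear combination $R_\alpha=\sum_i\alpha_i\lambda^{\FF_i}$ supported on at least three families. The key structural fact we will exploit is that each family vector $\lambda^{\FF}$ has support of cardinality exactly $4$ (two parents in generation $\SS_k$, two children in generation $\SS_{k+1}$), and that the overlap between two distinct family vectors is at most one element (a child of one family that is a parent in the other, or a shared parent/child under the genealogical tree structure). We will also use Remarks \ref{remark.1} and \ref{remark.2} to control the projections $\pi_k R_\alpha$ onto successive generations. The goal is to show the total support $\bigcup_k \mathrm{Supp}(\pi_k R_\alpha)$ has cardinality at least $8$.

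**Case analysis by the age profile.** First I would split according to the age range $[m_{R_\alpha},M_{R_\alpha}]$ of the families involved. If all $n_{R_\alpha}\geq 3$ families have the same age $k$, then $R_\alpha$ is supported inside $\SS_k\cup\SS_{k+1}$; since distinct families of the same age share no parents and no children (by clause 1 and 2 of Definition \ref{def.gen}, each element belongs to a unique family of each relevant age), the $\geq 3$ families contribute $\geq 3$ disjoint pairs of parents and $\geq 3$ disjoint pairs of children, already far more than $8$ elements before any cancellation, and cancellation among the $\alpha_i$ cannot reduce the support because the supports are literally disjoint. If the families span at least two consecutive ages, say ages $k$ and $k+1$ are both present, I would use Remark \ref{remark.1} (if exactly one family of each age) or Remark \ref{remark.2} (if the counts differ) to guarantee $\pi_{k+1}R_\alpha$ is supported on $\geq 2$ elements; then I would add the contributions of $\pi_k R_\alpha$ (parents of the age-$k$ families, at least $2$ distinct elements unless there is exactly one age-$k$ family, in which case $2$ parents) and $\pi_{M_{R_\alpha}+1}R_\alpha$ (children of the oldest families, again $\geq 2$). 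Summing the contributions generation by generation, using that generations are pairwise disjoint, should push the count to $\geq 8$.

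**Assembling the count.** More precisely, I would set up the bound $|\mathrm{Supp}(R_\alpha)|\geq\sum_{k=m}^{M+1}|\mathrm{Supp}(\pi_k R_\alpha)|$, which is an equality since the $\SS_k$ are disjoint, and then lower-bound each term. The cleanest subcase is $n_{R_\alpha}=3$ with all three families distinct ages $m<\ell<M$: each of generations $m,\ell,M$ hosts the parents of one family and generation $M+1$ hosts children, and one checks that the telescoping structure forces at least $2$ surviving elements in each of the $\geq 4$ relevant generations after cancellation — but in fact more, because a family of age $\ell$ can cancel at most one parent against a child of the age-$m$ family and at most one child against a parent of the age-$M$ family, leaving plenty. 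I expect the main obstacle to be the bookkeeping in the \emph{intermediate} generations when several families of nearby (or equal) ages overlap in a chain: one must verify that the algebraic cancellations permitted by the coefficients $\alpha_i$ can never collapse the support below $8$, which amounts to checking that no family vector is a $\pm1$ combination killing too many entries of the others. Once the worst chain configuration ($3$ families forming a consecutive generational chain, each sharing one element with the next) is shown to still occupy $\geq 8$ distinct sites — it occupies $4+3+3 = 10$ minus at most $2$ overlaps, i.e. $\geq 8$ — the remaining configurations are strictly better, and the lemma follows.
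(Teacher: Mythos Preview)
Your approach is essentially the paper's---analyze $R_\alpha$ generation by generation, using Remarks~\ref{remark.1} and~\ref{remark.2} to lower-bound each $|\mathrm{Supp}(\pi_k R_\alpha)|$---but your case analysis has a genuine gap. You verify only two configurations: all families of the same age, and three families in a consecutive chain sharing one element at each junction. You then assert that ``the remaining configurations are strictly better,'' and this is false. Take one family of age $m$ and two of age $m{+}1$, arranged so that the two children of the first family are each a parent in one of the other two, and set $\alpha_0=\alpha_1=\alpha_2$: the coefficients at both overlap points cancel and the support has size exactly $8$ (two parents in $\SS_m$, the two non-overlapping parents in $\SS_{m+1}$, and four children in $\SS_{m+2}$). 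So this configuration is \emph{not} strictly better than the chain and requires its own verification, which you have not supplied. The case $n_{R_\alpha}>3$ is likewise handled only by the same unproved assertion.

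The paper closes this gap with a cleaner split. It first observes that $\pi_{m_{R_\alpha}}R_\alpha$ and $\pi_{M_{R_\alpha}+1}R_\alpha$ already contribute $2n_{R_\alpha}^{m}+2n_{R_\alpha}^{M}$ elements (parents of the youngest families and children of the oldest families, where no cancellation is possible), so one is done unless $n_{R_\alpha}^{m}+n_{R_\alpha}^{M}\leq 3$. Up to the parent/child symmetry this leaves only $(n_{R_\alpha}^{m},n_{R_\alpha}^{M})=(1,1)$ and $(1,2)$, each then finished by locating one or two intermediate generations contributing $\geq 2$ via the Remarks. Your chain computation is exactly the $(1,1)$ subcase with $\max_k n_{R_\alpha}^k=1$; what is missing is the $(1,2)$ case and the $(1,1)$ subcase with $\max_k n_{R_\alpha}^k>1$.
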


\begin{proof}
For simplicity of notation, here we put $m:=m_{R_{\alpha}}$ and $M:=M_{R_{\alpha}}$. First, observe that $\pi_{m}R_{\alpha}$ is supported on $2n_{R_{\alpha}}^m$ elements and that $\pi_{M+1}R_{\alpha}$ is supported on $2n_{R_{\alpha}}^M$ elements. So, if $n_{R_{\alpha}}^m+n_{R_{\alpha}}^M\geq4$, the thesis is trivial.

Up to symmetry between parents and children, we may choose $n_{R_{\alpha}}^m\leq n_{R_{\alpha}}^M$. So, the only non-trivial cases to consider are $(n_{R_{\alpha}}^m,n_{R_{\alpha}}^M)=(1,1)$ and $(n_{R_{\alpha}}^m,n_{R_{\alpha}}^M)=(1,2)$.

\textit{Case (1,1)}. We must have $M\geq m+2$, since there must be at least three families in $R_{\alpha}$. Now, let $C:=\max_i n_{R_{\alpha}}^i$. If $C=1$, then by Remark \ref{remark.1} the support of $R_{\alpha}$ involves at least 4 generations and at least 2 elements for each generation, so it includes at least 8 elements. If $C>1$, then there exist $m\leq i,j<M$ with $i\neq j$ such that $n_{R_{\alpha}}^i<n_{R_{\alpha}}^{i+1}$ and $n_{R_{\alpha}}^{j+1}<n_{R_{\alpha}}^j$. Then, by Remark \ref{remark.2}, $\pi_{i+1}R_{\alpha}$ and $\pi_{j+1}R_{\alpha}$ are supported on at least 2 elements each. Since $\pi_{m}R_{\alpha}$ and $\pi_{M+1}R_{\alpha}$ are supported on exactly 2 elements and since the four indices $m,i+1,j+1,M+1$ are all distinct, then we have the thesis.

\textit{Case (1,2)}. Here, $\pi_{m}R_{\alpha}$ is supported on 2 elements and $\pi_{M}R_{\alpha}$ is supported on 4 elements. Moreover, there exists $m\leq i<M$ such that $n_{R_{\alpha}}^{i+1}<n_{R_{\alpha}}^i$, which by Remark \ref{remark.2} gives us at least 2 elements in the support of $\pi_{i+1}R_{\alpha}$. Thus, we have the thesis.
\end{proof}

From Lemma \ref{lemma.3.8}, the next corollary follows immediately.

\begin{corollary}\label{pochi}
If $R_\alpha$ is supported on at most 7 elements, then $R_\alpha$ is an integer multiple of either a family vector or a resonant vector of type CF.
\end{corollary}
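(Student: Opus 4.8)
The plan is to deduce Corollary~\ref{pochi} directly from Lemma~\ref{lemma.3.8} together with the basic structure of the families in a generation set. Write $R_\alpha=\sum_i\alpha_i\lambda^{\FF_i}$ and let $n_{R_\alpha}$ be the number of families on which it is supported. By Lemma~\ref{lemma.3.8}, if $R_\alpha$ is supported on at most $7$ distinct elements, then necessarily $n_{R_\alpha}\leq 2$, so only the cases $n_{R_\alpha}=1$ and $n_{R_\alpha}=2$ remain to be analyzed.

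If $n_{R_\alpha}=1$, then $R_\alpha=\alpha_i\lambda^{\FF_i}$ for a single family $\FF_i$, which is manifestly an integer multiple of a family vector. If $n_{R_\alpha}=2$, write $R_\alpha=\alpha_1\lambda^{\FF_1}+\alpha_2\lambda^{\FF_2}$ with $\alpha_1,\alpha_2\neq0$ and $\FF_1\neq\FF_2$. I would first observe that if $\FF_1$ and $\FF_2$ have no common member, then $R_\alpha$ is supported on $|\alpha_1|\cdot(\text{something})$... more precisely the supports of $\lambda^{\FF_1}$ and $\lambda^{\FF_2}$ are disjoint sets of four elements each, so $R_\alpha$ is supported on $8$ distinct elements, contradicting the hypothesis. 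Hence $\FF_1$ and $\FF_2$ must share a member; by Definition~\ref{def.gen} a common member of two distinct families is a child in one and a parent in the other, and the two families cannot share two members (that would force them equal, using parts (1)--(2) of Definition~\ref{def.gen}). So they share exactly one element and the support of $\lambda^{\FF_1}+\lambda^{\FF_2}$ has cardinality $7$.

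The remaining point is to pin down the coefficients: I need $\alpha_1=\pm\alpha_2=\pm1$ so that $R_\alpha=\pm(\lambda^{\FF_1}+\lambda^{\FF_2})$, i.e.\ a resonant vector of type CF in the sense of Definition~\ref{def.CF}. Here the constraint $|\lambda|=\sum_i|\lambda_i|\leq 6$ from Definition~\ref{def.res} does the work: since the six elements in the symmetric difference of the two supports all appear in $R_\alpha$ with coefficient $\pm\alpha_1$ or $\pm\alpha_2$ and do not cancel, already $|R_\alpha|\geq 3|\alpha_1|+3|\alpha_2|$ (three "private" elements from each family), which forces $|\alpha_1|=|\alpha_2|=1$. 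It remains to check that the signs work out so that the shared element either cancels or survives with coefficient of absolute value $\leq 6-6=0$ or fits the bound; in fact if $\alpha_1=\alpha_2$ the shared element (a child of one family, parent of the other) enters with total coefficient of absolute value $2$, still giving $|R_\alpha|\le 6+2>6$, a contradiction, so $\alpha_1=-\alpha_2$ and the shared element cancels, leaving exactly $R_\alpha=\pm(\lambda^{\FF_1}-(-\lambda^{\FF_2}))$; after adjusting which family is labeled with the parent/child roles this is $\pm(\lambda^{\FF_1}+\lambda^{\FF_2})$ up to relabeling, which is precisely the CF form. I expect the sign bookkeeping in this last step to be the only delicate part; everything else is immediate from Lemma~\ref{lemma.3.8} and the definitions.
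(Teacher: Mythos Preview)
Your reduction via Lemma~\ref{lemma.3.8} to $n_{R_\alpha}\le 2$, the $n_{R_\alpha}=1$ case, and the observation that two contributing families must share exactly one member (else the support has $8$ elements) are all correct, and this is essentially what the paper means by ``follows immediately''.

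The final step, however, contains two genuine errors. First, you invoke the bound $|R_\alpha|=\sum_i|\lambda_i|\le 6$ from Definition~\ref{def.res}, but $R_\alpha$ is \emph{not} assumed to be a resonant vector in that sense: it is introduced just before Remark~\ref{remark.1} merely as an integer combination $\sum_i\alpha_i\lambda^{\FF_i}$, with no $\ell^1$ constraint, and the corollary's hypothesis is only on the size of the support. Second, your sign analysis is exactly backwards. The shared element $v$ is a child in one family and a parent in the other, so its coefficients in $\lambda^{\FF_1}$ and $\lambda^{\FF_2}$ are $-1$ and $+1$. Hence in $\alpha_1\lambda^{\FF_1}+\alpha_2\lambda^{\FF_2}$ the coefficient of $v$ is $\alpha_2-\alpha_1$: it \emph{vanishes} precisely when $\alpha_1=\alpha_2$ (yielding an integer multiple of the CF vector $\lambda^{\FF_1}+\lambda^{\FF_2}$), and equals $\pm2\alpha_1$ when $\alpha_1=-\alpha_2$, in which case $R_\alpha=\alpha_1(\lambda^{\FF_1}-\lambda^{\FF_2})$ has $7$-element support and is \emph{not} of CF form; no ``relabeling'' turns a difference of family vectors into a sum.

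These two points are in fact linked: without an $\ell^1$ bound the case $\alpha_1\ne\alpha_2$ cannot be excluded by support considerations alone (for instance $\lambda^{\FF_1}+2\lambda^{\FF_2}$, with the families sharing one member, has support of size $7$ but is neither an integer multiple of a family vector nor of a CF vector). So the statement really needs the extra hypothesis $|R_\alpha|\le 6$ --- which is indeed satisfied in the paper's later applications --- and once that is in place, the \emph{correct} sign computation forces $\alpha_1=\alpha_2=\pm1$ directly, since $3|\alpha_1|+3|\alpha_2|+|\alpha_2-\alpha_1|\le 6$ admits no other nonzero integer solutions.
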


\begin{lemma}\label{lemma.const}
Let $A,B,C\in\mathbb R$, $R>0$ and $p,q\in\mathbb R^2\simeq\mathbb C$ be fixed. Let
$$c_1(\vartheta):=p+Re^{i\vartheta}\ c_2(\vartheta):=p-Re^{i\vartheta}\ .$$
Then, the function $F:\mathbb S^1\to\mathbb R$ defined by
$$F(\vartheta):=A|c_1(\vartheta)|^2+B|c_2(\vartheta)|^2+C-|Ac_1(\vartheta)+Bc_2(\vartheta)+q|^2$$ is an analytic function of $\vartheta$, and it is a constant function only if $A=B$ or if $(A+B-1)p+q=0$.
\end{lemma}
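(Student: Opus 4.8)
The plan is to expand $F(\vartheta)$ explicitly as a trigonometric polynomial in $\vartheta$, identify the coefficients of the non-constant harmonics, and show that forcing all of them to vanish leaves exactly the two stated alternatives. First I would compute the three pieces. Writing $z:=e^{i\vartheta}$ and using $|p\pm Rz|^2 = |p|^2 + R^2 \pm 2R\,\Re(\bar p z)$, we get
$$
A|c_1|^2 + B|c_2|^2 + C = (A+B)(|p|^2+R^2) + C + 2R(A-B)\,\Re(\bar p z),
$$
which only involves the first harmonic $\Re(\bar p z)$. Next, $Ac_1 + Bc_2 + q = (A+B)p + q + R(A-B)z$, so
$$
|Ac_1+Bc_2+q|^2 = |(A+B)p+q|^2 + R^2(A-B)^2 + 2R(A-B)\,\Re\!\big(\overline{((A+B)p+q)}\,z\big).
$$
This is again only a first-harmonic trigonometric polynomial (the would-be second harmonic $z^2$ has coefficient $0$ because $c_1$ and $c_2$ differ only in the sign of $Rz$, so their product contributes no $z^2$ term — this is the reason the lemma is clean). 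Analyticity is then immediate since $F$ is a finite Fourier series in $\vartheta$.

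The heart of the argument is collecting the coefficient of $z$ (equivalently, of $\Re(\cdot z)$). From the above, the first-harmonic part of $F$ is
$$
2R\,\Re\Big(\big[(A-B)\bar p - (A-B)\overline{((A+B)p+q)}\big] z\Big)
= 2R(A-B)\,\Re\Big(\overline{\big(p - (A+B)p - q\big)}\; z\Big).
$$
So $F$ is constant in $\vartheta$ if and only if the complex number
$$
R(A-B)\big((1-A-B)p - q\big)
$$
vanishes. Since $R>0$, this happens precisely when $A=B$ or when $(1-A-B)p - q = 0$, i.e. $(A+B-1)p + q = 0$, which is exactly the claimed dichotomy. (One should note the harmless possibility that both $p=0$ and $q=0$, or $A=B$, in which case $F$ is genuinely constant; these are covered by the two alternatives.)

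The only mild subtlety — and the step I'd be most careful about — is verifying the vanishing of the second-harmonic coefficient, since a priori $|Ac_1+Bc_2+q|^2$ could contain a $z^2$ term; but $Ac_1+Bc_2+q = \big((A+B)p+q\big) + R(A-B)z$ is affine in $z$, so its modulus squared has no harmonic beyond the first, and the same is obviously true of $A|c_1|^2+B|c_2|^2+C$. Hence no cancellation between harmonics is needed: the first-harmonic coefficient must vanish on its own, and the computation above is forced. Everything else is the routine algebra of expanding $|\cdot|^2$.
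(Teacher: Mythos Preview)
Your proof is correct and takes essentially the same approach as the paper: the paper's proof simply states that an explicit computation yields $F(\vartheta)=2R(B-A)\langle(A+B-1)p+q,\,e^{i\vartheta}\rangle+K$, which is exactly the formula you derive after collecting the first-harmonic terms. Your write-up just fills in the intermediate algebra that the paper omits.
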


\begin{proof}
An explicit computation yields
$$F(\vartheta)=2R(B-A)\langle(A+B-1)p+q,e^{i\vartheta}\rangle+K$$
where $K$ is a suitable constant that does not depend on $\vartheta$.
\end{proof}

\begin{corollary}\label{coro.isolated}
If $A\neq B$ and $(A+B-1)p+q\neq0$, then the zeros of $F$ are isolated.
\end{corollary}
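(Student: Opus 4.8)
The final statement to prove is Corollary \ref{coro.isolated}: if $A\neq B$ and $(A+B-1)p+q\neq0$, then the zeros of $F$ are isolated.

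The plan is to deduce this immediately from Lemma \ref{lemma.const}, which we may assume. By Lemma \ref{lemma.const}, the function $F:\mathbb S^1\to\mathbb R$ is analytic in $\vartheta$; indeed the explicit computation there shows $F(\vartheta)=2R(B-A)\langle(A+B-1)p+q,e^{i\vartheta}\rangle+K$, which is manifestly real-analytic (in fact a trigonometric polynomial of degree one). A real-analytic function on the connected compact manifold $\mathbb S^1$ is either identically zero or has only isolated zeros: if the zero set had an accumulation point, then all derivatives of $F$ would vanish there, and by analyticity $F$ would vanish on a neighborhood, hence on all of $\mathbb S^1$ by the identity principle, so $F\equiv 0$.

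So the only thing left to rule out is $F\equiv 0$. But Lemma \ref{lemma.const} tells us precisely that $F$ is constant only if $A=B$ or $(A+B-1)p+q=0$. Under the hypotheses $A\neq B$ and $(A+B-1)p+q\neq0$, neither alternative holds, so $F$ is non-constant; in particular $F$ is not identically zero. Combining with the previous paragraph, the zero set of $F$ has no accumulation point, i.e. the zeros of $F$ are isolated.

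There is essentially no obstacle here: the corollary is a one-line consequence of the lemma together with the standard fact about zeros of real-analytic functions on $\mathbb S^1$. The only point requiring minimal care is to note that $F$ being non-constant and analytic does force isolated zeros (a non-constant analytic function could still have no zeros at all, which is fine — "isolated" is vacuously satisfied — but cannot have a zero set with an accumulation point). If one prefers to avoid even invoking analyticity abstractly, one can argue directly from the explicit formula: $F(\vartheta)=0$ is equivalent to $\langle(A+B-1)p+q,e^{i\vartheta}\rangle = -K/(2R(B-A))$, and since $(A+B-1)p+q\neq 0$ the left-hand side is a genuine nonzero cosine wave $|(A+B-1)p+q|\cos(\vartheta-\vartheta_0)$, so the equation $\cos(\vartheta-\vartheta_0)=\text{const}$ has at most two solutions on $\mathbb S^1$ (and these are automatically isolated), giving the conclusion directly.
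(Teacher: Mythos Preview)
Your proof is correct and matches the paper's intent: the corollary is stated without proof in the paper, as an immediate consequence of Lemma~\ref{lemma.const}, and your argument (non-constant analytic function on $\mathbb S^1$ has isolated zeros, or alternatively the direct count via the explicit cosine formula) is exactly the sort of one-line justification the authors had in mind.
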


\begin{lemma}\label{lemma.boh}
Let $\FF$ be a family of age $i$ in $\SS$ and let $\lambda^{\FF_p}$ be the vector corresponding to the sum of the parents of the family $\FF$. Moreover, let $\mu\in\mathbb Z^{|\SS|}$ be another vector with $|\mu|\leq5$, such that $\pi_j\mu=0$ for all $j>i+1$ and such that the support of $\mu$ and the support of the vector $\lambda^{\FF_c}$ corresponding to the children of $\FF$ are disjoint. Finally, let $h,k\in\mathbb Z\setminus\{0\}$. Assume that the formal identity $h\mu+k\lambda^{\FF_p}\sim0$ holds. Then, only two possibilities are allowed:
\begin{enumerate}
\item $h\mu+k\lambda^{\FF_p}=0$;
\item $h\mu+k\lambda^{\FF_p}$ is an integer multiple of $\lambda^{\tilde{\FF}}$, where $\tilde{\FF}$ is a family of age $i-1$, one of whose children is a parent in $\FF$.
\end{enumerate}
\end{lemma}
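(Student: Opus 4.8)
The plan is to set $R:=h\mu+k\lambda^{\FF_p}$ and to use that, by hypothesis, $R$ is a $\mathbb Z$-linear combination of the family vectors, $R=\sum_{\FF'}\alpha_{\FF'}\lambda^{\FF'}$, with uniquely determined coefficients since the $\lambda^{\FF'}$ are linearly independent (Remark after Definition \ref{def.gen}). Writing $v_1,v_2$ for the parents of $\FF$, so that $\mathrm{supp}(\lambda^{\FF_p})=\{v_1,v_2\}$, one has $\mathrm{supp}(R)\subseteq\mathrm{supp}(\mu)\cup\{v_1,v_2\}$, hence $|\mathrm{supp}(R)|\le|\mu|+2\le 7$. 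By Corollary \ref{pochi}, $R$ is then either $0$ (this is possibility (1)), or an integer multiple $R=m\lambda^{\FF'}$ of a single family vector, or an integer multiple $R=m'(\lambda^{\FF_1}+\lambda^{\FF_2})$ of a resonant vector of type CF, where $\FF_1\ne\FF_2$ share a vertex that is a child of one and a parent of the other. I would rule out the type-CF alternative and show the family-vector one gives possibility (2).

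Two preliminary remarks drive everything. Since $\pi_j\mu=0$ for $j>i+1$ and $\mathrm{supp}(\lambda^{\FF_p})\subset\SS_i$, the vector $R$ is supported on generations $\le i+1$; if $a^\ast$ is the largest age of a family with $\alpha_{\FF'}\ne0$, then $\pi_{a^\ast+1}R=-\sum_{\mathrm{age}(\FF')=a^\ast}\alpha_{\FF'}\lambda^{\FF'_c}$ is a nonzero sum of vectors with pairwise disjoint nonempty children-supports (Definition \ref{def.gen}, part 2), so $a^\ast\le i$: every family occurring in $R$ has age $\le i$. Consequently, evaluating $R$ at a child $v_3$ of $\FF$ gives $0=R_{v_3}=-\alpha_\FF$ (among families of age $\le i$ only $\FF$ contains $v_3$, and $\mu_{v_3}=0$ because $\mathrm{supp}(\mu)$ avoids $\mathrm{supp}(\lambda^{\FF_c})=\{v_3,v_4\}$), so $\alpha_\FF=0$; and evaluating $R$ at $v_1$, which belongs only to $\FF$ and to the age-$(i-1)$ family $\FF_{v_1}$ having $v_1$ as a child, gives $\alpha_{\FF_{v_1}}=-(h\mu_{v_1}+k)$, and symmetrically $\alpha_{\FF_{v_2}}=-(h\mu_{v_2}+k)$, with $\FF_{v_1}\ne\FF_{v_2}$ because spouses are never siblings (Definition \ref{def.gen}, part 3).

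In the case $R=m\lambda^{\FF'}$ with $m\ne0$, uniqueness forces $\alpha_{\FF'}=m$ and all other $\alpha$'s to vanish. If $\FF'\notin\{\FF_{v_1},\FF_{v_2}\}$ then $h\mu_{v_1}+k=h\mu_{v_2}+k=0$, so $R$ vanishes at $v_1,v_2$ and the rectangle of $\FF'$ avoids $\{v_1,v_2\}$; reading $h\mu=m\lambda^{\FF'}-k\lambda^{\FF_p}$ at a vertex of that rectangle shows $h\mid m$ and at $v_1$ shows $h\mid k$, whence $\mu=\tilde m\lambda^{\FF'}-\tilde k\lambda^{\FF_p}$ with $\tilde m,\tilde k\ne0$ and disjoint supports, so $|\mu|=4|\tilde m|+2|\tilde k|\ge6$ --- a contradiction. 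Hence $\FF'\in\{\FF_{v_1},\FF_{v_2}\}$ is a family of age $i-1$ one of whose children ($v_1$ or $v_2$) is a parent in $\FF$, which is possibility (2).

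The crux is ruling out $R=m'(\lambda^{\FF_1}+\lambda^{\FF_2})$, $m'\ne0$. Labelling so that the common vertex of $\FF_1,\FF_2$ is a child of $\FF_1$ and a parent of $\FF_2$, one has $\mathrm{age}(\FF_2)=\mathrm{age}(\FF_1)+1=:a+1\le i$, and the support of $\lambda^{\FF_1}+\lambda^{\FF_2}$ is the complementary six vertices: coefficient $+1$ on the two parents of $\FF_1$ (generation $a$) and the non-common parent of $\FF_2$ (generation $a+1$), coefficient $-1$ on the non-common child of $\FF_1$ (generation $a+1$) and the two children of $\FF_2$ (generation $a+2$). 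Divisibility again gives $m'=h\tilde m$, $k=h\tilde k$ with $\tilde m,\tilde k\ne0$ and $\mu=\tilde m(\lambda^{\FF_1}+\lambda^{\FF_2})-\tilde k\lambda^{\FF_p}$. Then $|\mu|\le5$ forces, successively: both $v_1,v_2\in\mathrm{supp}(\lambda^{\FF_1}+\lambda^{\FF_2})$ (otherwise at least five coordinates contribute $|\tilde m|$ and at least one more contributes $|\tilde k|$, so $|\mu|\ge6$); then $|\tilde m|=1$; then, inspecting $\mu_{v_\ell}=\tilde m\delta_\ell-\tilde k$ with $\delta_\ell\in\{\pm1\}$ the coefficient of $v_\ell$, that $\delta_1=\delta_2$ and $\mu_{v_1}=\mu_{v_2}=0$. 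Thus $v_1,v_2$ would be two distinct generation-$i$ vertices carrying the same sign in $\lambda^{\FF_1}+\lambda^{\FF_2}$; but the $+1$ vertices occupy generations $a$ and $a+1$ with $a\le i-1$, so cannot supply two vertices in generation $i$, while the $-1$ vertices occupy generations $a+1$ and $a+2$, forcing $a+2=i$ and $v_1,v_2$ to be the two children of $\FF_2$ --- hence siblings as well as spouses, contradicting Definition \ref{def.gen}, part 3. The degenerate value $i=1$ (where possibility (2) is empty) is handled the same way: the type-CF alternative is impossible for age reasons, and in $R=m\lambda^{\FF'}$ the first-generation vertices $v_1,v_2$ lie in no family of age $i-1$, so $\FF'=\FF$, forcing $m=\alpha_\FF=0$ and $R=0$. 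The one genuinely delicate step is this last bookkeeping --- arranging the generation/sign data of the six-vertex CF support so that $|\mu|\le5$ collides with the non-degeneracy of the generation set.
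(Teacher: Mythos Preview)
Your proof is correct and follows the same skeleton as the paper: bound $|\mathrm{supp}(R)|\le 7$, invoke Corollary~\ref{pochi}, and rule out the CF alternative while pinning down the single-family one. The details of your case analysis are different, however. In the single-family case the paper argues directly on $|\mathrm{supp}(R)|$: if both parents $v_1,v_2$ cancel then $|\mathrm{supp}(R)|\le 3<4$, if neither cancels then $\tilde\FF=\FF$ (the only family containing both $v_1$ and $v_2$, since spouses are not siblings), which is excluded because $R$ avoids the children of $\FF$; hence exactly one parent survives and $\tilde\FF\in\{\FF_{v_1},\FF_{v_2}\}$. In the CF case the paper observes that the only spouse-pair appearing in the six-element support of a CF vector is the pair of parents of the older family $\FF_1$, so if $v_1,v_2$ were both in $\mathrm{supp}(R)$ they would force $\mathrm{age}(\FF_1)=i$ and hence vertices in generation $i+2$, contradicting $\pi_jR=0$ for $j>i+1$; thus one parent cancels and $|\mathrm{supp}(R)|\le 5<6$. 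Your route instead pulls everything back to $\mu$ via the divisibility $h\mid m$, $h\mid k$ and then counts $|\mu|$, which is longer but makes the generation/sign bookkeeping completely explicit. One small slip: in your $i=1$ paragraph you jump to ``$\FF'=\FF$'' without saying what happens when $\FF'$ avoids $v_1,v_2$ --- but that is exactly the case your earlier $|\mu|\ge 4|\tilde m|+2|\tilde k|\ge 6$ argument already handles, so the gap is cosmetic.
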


\begin{proof}
We first remark that $h\mu+k\lambda^{\FF_p}$ is supported on at most 7 elements. Moreover, since it is a linear combination of some family vectors (because of the formal identity $h\mu+k\lambda^{\FF_p}\sim0$), we are in a position to apply Corollary \ref{pochi} and conclude that $h\mu+k\lambda^{\FF_p}$ must be an integer multiple of either a family vector or a resonant vector of type CF.

Now, assume by contradiction that $h\mu+k\lambda^{\FF_p}$ is a nonzero integer multiple of a resonant vector of type CF.  Then, the support of $h\mu+k\lambda^{\FF_p}$ cannot include both parents of the family $\FF$, since the support of a CF vector including a couple of parents of age $i$ should include also a couple of children of age $i+2$, but we know by the assumptions of this lemma that the support of $h\mu+k\lambda^{\FF_p}$ does not include elements of age greater than $i+1$. Therefore, at least one of the elements in $\lambda$ must cancel out with one of the elements in $\lambda^{\FF_p}$, but then the support of $h\mu+k\lambda^{\FF_p}$ can include at most 5 elements, and therefore it cannot be a vector of type CF.

Then, if $h\mu+k\lambda^{\FF_p}$ is a nonzero integer multiple of a single family vector $\tilde{\FF}$, observe that its support must contain one and only one of the parents of $\FF$. In fact, if both canceled out, then the support of $h\mu+k\lambda^{\FF_p}$ could contain at most 3 elements, which is absurd. If none of them canceled out, then we should have $\tilde{\FF}=\FF$, which in turn is absurd since, by the assumptions of this lemma, the support of $h\mu+k\lambda^{\FF_p}$ cannot include any of the children of the family $\FF$. This concludes the proof of the lemma.
\end{proof}

We can now prove the main proposition.

\begin{proof}[Proof of Proposition \ref{prop.nondeg}]
The proof is based on the following induction procedure. At each step, we assume to have already fixed $i$ generations and say $h<2^{N-2}$ families with children in the $i+1$-th generation. Our induction hypothesis is that the non-degeneracy condition is satisfied for the vectors $\mu$ whose support involves only the elements that we have already fixed. Then, our aim is to show that the non-degeneracy condition holds true also for the set for the vectors supported on the already fixed elements plus the two children of a new family (whose procreation angle has to be accordingly chosen) with children in the $i+1$-th generation, up to removing from $\XX$ a closed set of null measure.

First, we observe that, at the inductive step zero, i.e. when placing the first generation $\SS_1$, the set of parameters that satisfy both non-degeneracy and non-vanishing of any fixed finite number of linear relations that are not formal identities is obviously open and of full measure.

Then, we have to study what happens when choosing a procreation angle, i.e. when generating the children of a family $\FF=(p_1,p_2;c_1,c_2)$ whose parents have already been fixed. Let $\lambda_1,\lambda_2\in\mathbb Z^{|\SS|}$ be defined by $\sum_j(\lambda_i)_jv_j=c_i$ for $i=1,2$. We need to study the non-degeneracy condition associated to the vector $\lambda(A,B,\mu)\in\mathbb Z^{|\SS|}$ given by
$$\lambda(A,B,\mu):=A\lambda_1+B\lambda_2+\mu\ ,$$
where $\mu$ satisfies the same properties as in the assumptions of Lemma \ref{lemma.boh} and
$$|A|+|B|+|\mu|\leq5$$
$$A+B+\sum_j\mu_j=1\ .$$
If $A\neq B$ and if $(A+B-1)(p_1+p_2)+2\sum_j\mu_jv_j\neq0$, then we are done, because, thanks to Corollary \ref{coro.isolated}, the non-degeneracy condition is satisfied for any choice of the generation angle except at most a finite number. Therefore, we have to study separately the case $A=B$ and, for $A\neq B$, we have to prove that $(A+B-1)\lambda^{\FF_p}+2\mu\sim0$ holds as a formal identity only in the cases allowed by Definition \ref{def.ND}. Whenever the formal identity $(A+B-1)\lambda^{\FF_p}+2\mu\sim0$ does not hold, we can impose $(A+B-1)(p_1+p_2)+2\sum_j\mu_jv_j\neq0$ by just removing from $\XX$ a closed set of measure zero, thanks to Lemma \ref{lemma.identities}.

\textit{Case $A=B$}. If $(A,B)=(0,0)$ there is nothing to prove, thanks to the induction hypothesis. Then we have to study $(A,B)=\pm(1,1)$. In this case, thanks to the linear relation defining the family $\FF$, we have the formal identity
$$\lambda(A,B,\mu)\sim\pm\lambda^{\FF_p}+\mu=:\nu^{\pm}$$
with $|\mu|\leq3$, $\sum_j\nu_j^{+}=-1$ and $\sum_j\nu_j^{-}=3$. The good point is that $\nu^{\pm}$ is entirely supported on the elements of the generation set that have already been fixed, so we can apply the induction hypothesis of non-degeneracy to $\nu^{\pm}$ and distinguish the 4 cases given by Definition \ref{def.ND}: we have to verify that $\lambda(A,B,\mu)$ accordingly falls into one of the allowed cases.
\begin{itemize}
\item $\nu^{\pm}$ satisfies case 1 of Definition \ref{def.ND}. Then one readily verifies that $\lambda(A,B,\mu)$ satisfies either case 2 or case 3 of Definition \ref{def.ND}.
\item $\nu^{\pm}$ satisfies case 2 of Definition \ref{def.ND}. Observe that the family involved by the statement of case 2 cannot be $\FF$, since $\nu^{\pm}$ cannot be supported on either child of the family $\FF$. Then $\mu$ must cancel out one of the two parents appearing in $\pm\lambda^{\FF_p}$. It cannot be supported on both parents because that would not be consistent with $|\mu|\leq3$ and $|\nu^{\pm}|=3$. Then one verifies that $\lambda(A,B,\mu)$ satisfies case 4 of Definition \ref{def.ND}.
\item $\nu^{\pm}$ satisfies case 3 of Definition \ref{def.ND}. Since $|\nu^{\pm}|=1$, then nothing cancels out, so the support of $\nu^{\pm}$ includes both parents of $\FF$. But this is absurd, so this case cannot happen.
\item $\nu^{\pm}$ satisfies case 4 of Definition \ref{def.ND}. This case is again absurd, since $\nu^{\pm}$ should be supported on 5 of the 6 elements of a CF vector, including the two parents of the family $\FF$.
\end{itemize}

\textit{Case $A\neq B$}. By symmetry, we may impose $|A|>|B|$. Assume that $(A+B-1)\lambda^{\FF_p}+2\mu\sim0$ holds as a formal identity: we must prove that this can be true only in the cases allowed by Definition \ref{def.ND}. First, we consider the case $A+B-1=0$: then, we must have the formal identity $\mu\sim0$ with $|\mu|\leq5$: so, by Corollary \ref{pochi} either $\mu$ is (up to the sign) a family vector (which may happen only if $(A,B)=(1,0)$ due to the constraint $|A|+|B|+|\mu|\leq5$) or $\mu=0$. Consider the case $(A,B)=(1,0)$: if $\mu$ is a family vector, then $\lambda(A,B,\mu)$ falls into case 3 of Definition \ref{def.ND}; if $\mu=0$, then $\lambda(A,B,\mu)$ falls into case 1 of Definition \ref{def.ND}. If $(A,B)=(2,-1)$ or $(A,B)=(3,-2)$, then $\mu=0$. Then, in both cases, from $$\sum_j\lambda_j(A,B,\mu)|v_j|^2-\bigg|\sum_j\lambda_j(A,B,\mu)v_j\bigg|^2$$
with some explicit computations one deduces $|c_1-c_2|^2=0$ which is absurd, since the induction hypothesis implies $p_1\neq p_2$ and since the endpoints of a diameter of a circle with positive radius are distinct.

Now, if $A+B-1\neq0$ we can apply Lemma \ref{lemma.boh} and deduce that $(A+B-1)\lambda^{\FF_p}+2\mu$ is either zero or an integer multiple of the vector of a family where one of the parents of $\FF$ appears as a child. Suppose first $(A+B-1)\lambda^{\FF_p}+2\mu=0$. Then $A+B-1$ must be even. If $(A,B)=(-1,0)$, then $\mu=\lambda^{\FF_p}$ and $\lambda(A,B,\mu)$ falls into case 2 of Definition \ref{def.ND}. If $(A,B)=(2,1)$, then $\mu=-\lambda^{\FF_p}$ and $\lambda(A,B,\mu)$ falls into case 3 of Definition \ref{def.ND}. These are the only possible cases if $(A+B-1)\lambda^{\FF_p}+2\mu=0$. Finally, assume that $(A+B-1)\lambda^{\FF_p}+2\mu$ is an integer multiple of the vector of a family where one of the parents of $\FF$ appears as a child. Then $\mu$ must be such that the other parent of $\FF$ is canceled out, so $A+B-1$ again has to be even. If $(A,B)=(-1,0)$, then $\mu-\lambda^{\FF_p}$ is the vector of a family where one of the parents of $\FF$ appears as a child and $\lambda(A,B,\mu)$ falls into case 4 of Definition \ref{def.ND}. This is the only possible case, since the support of $\mu$ must include one parent of $\FF$ and the other three members of the family where the other parent of $\FF$ appears as a child. This also concludes the proof of the proposition.
\end{proof}

\section{Proof of Theorem \ref{main.thm}}\label{all.together.now}
In the previous sections we have proved the existence of non-degenerate sets $\SS$ on which the Hamiltonian is \eqref{hamish} and the existence of a  {\em slider solution} for its dynamics. We now turn to the NLS equation \ref{NLS} with the purpose of proving the persistence of this type of solution.

As in \cite{CKSTT}, one can easily prove that \eqref{NLS} is locally well-posed in $\ell^1(\Z^2)$: to this end, one first introduces the multilinear operator
$$\mathcal N(t):\ell^1(\Z^2)\times\ell^1(\Z^2)\times\ell^1(\Z^2)\times\ell^1(\Z^2)\times\ell^1(\Z^2)\to\ell^1(\Z^2)$$
defined by
\begin{equation}\label{defN}
\big(\mathcal N(t)(a,b,c,d,f)\big)_j:=\sum_{j_1,j_2,j_3,j_4,j_5\in\mathbb{Z}^2 \atop j_1+j_2+j_3-j_4-j_5=j}a_{j_1}b_{j_2}c_{j_3}\bar{d}_{j_4}\bar{f}_{j_5}e^{i\omega_6t}
\end{equation}
so that \eqref{NLS} can be rewritten as
$$-i\dot a_j=\big(\mathcal N(t)(a,a,a,a,a)\big)_j\ .$$
Then, in order to obtain local well-posedness it is enough to observe that the following multilinear estimate holds
\begin{equation}\label{stimamultilin}
\|\mathcal N(t)(a,b,c,d,f)\|_{\ell^1}\lesssim\|a\|_{\ell^1}\|b\|_{\ell^1}\|c\|_{\ell^1}\|d\|_{\ell^1}\|f\|_{\ell^1}\ .
\end{equation}
\begin{lemma}\label{lemma.approx1}
Let $0 < \sigma < 1$ be an absolute constant (all implicit constants in this lemma may depend on $\sigma$). Let $B \gg 1$, and let $T \ll B^4\log B$ . Let $g(t) :=\{g_j(t)\}_{j \in \Z^2}$  be a solution of the equation
\begin{equation}\label{tao1}
\dot g(t)= i \big (\mathcal N(t)(g(t),g(t),g(t),g(t),g(t))+\mathcal E(t) \big )
\end{equation}

for times $0 \leq  t \leq  T$ , where $\mathcal N (t )$ is defined in \eqref{defN} and  the initial data $g(0)$  is compactly supported. Assume also that the solution $g(t)$ and the {\em error term} $\mathcal E(t)$ obey the bounds of the form
\begin{equation}\label{tao2}
\| g(t)\|_{\ell^1(\Z^2)}\lesssim B^{-1}
\end{equation}
\begin{equation}\label{tao3}
\left\| \int_0^t \mathcal E(\tau) d\tau \ \right\|_{\ell^1(\Z^2)} \lesssim B^{-1} 
\end{equation}
We conclude that if $a(t)$  denotes the solution to the NLS  \eqref{NLS} with initial data $a(0)= g(0)$  then we have
\begin{equation}
\| a(t)-g(t)\|_{\ell^1(\Z^2)}\lesssim B^{-1-\sigma/2}
\end{equation}
for all $0\leq t \leq T$.
\end{lemma}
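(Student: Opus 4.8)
The plan is a bootstrap (continuity) argument in $\ell^1(\Z^2)$, in which the multilinear estimate \eqref{stimamultilin} plays the role of the trilinear estimate of \cite{CKSTT} and the time window $T\ll B^4\log B$ is calibrated to the \emph{fourth} power of the $\ell^1$–size $B^{-1}$ of the solutions (the quintic nonlinearity producing a quartic prefactor). First I would record the basic existence facts. By \eqref{stimamultilin} the quintic NLS \eqref{NLS} is locally well posed in $\ell^1(\Z^2)$, and the a priori bound produced below will keep $\|a(t)\|_{\ell^1}$ of size $O(B^{-1})\ll 1$ on $[0,T]$; hence, by the standard blow‑up criterion coming from \eqref{stimamultilin}, the solution $a(t)$ with $a(0)=g(0)$ (compactly supported, so in $\ell^1$) exists on all of $[0,T]$. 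Next I set $e(t):=a(t)-g(t)$, so that $e(0)=0$, subtract \eqref{tao1} from \eqref{NLS}, and use Duhamel:
$$e(t)=-i\int_0^t\mathcal E(s)\,ds+i\int_0^t\Big(\mathcal N(s)\big(a(s),a(s),a(s),a(s),a(s)\big)-\mathcal N(s)\big(g(s),g(s),g(s),g(s),g(s)\big)\Big)\,ds\ .$$

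To estimate the nonlinear difference I would use the multilinearity of $\mathcal N(t)$ together with $\overline a-\overline g=\overline e$, expanding the integrand as a telescoping sum of five terms, each obtained by applying $\mathcal N(s)$ to five arguments of which exactly one is $e(s)$ (possibly conjugated, which is harmless for the $\ell^1$ estimate) and the other four lie in $\{a(s),g(s)\}$. Since \eqref{stimamultilin} is uniform in $t$ (the phases $e^{i\omega_6 t}$ have modulus one), this yields, for every $s$,
$$\big\|\mathcal N(s)(a,\dots,a)-\mathcal N(s)(g,\dots,g)\big\|_{\ell^1}\ \lesssim\ \big(\|a(s)\|_{\ell^1}+\|g(s)\|_{\ell^1}\big)^4\,\|e(s)\|_{\ell^1}\ .$$

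The heart of the matter is the continuity argument. Let $I\subseteq[0,T]$ be the largest interval containing $0$ on which $\|e(t)\|_{\ell^1}\le B^{-1}$; it is nonempty and relatively closed since $e(0)=0$ and $t\mapsto\|e(t)\|_{\ell^1}$ is continuous. On $I$ one has $\|a(t)\|_{\ell^1}\le\|g(t)\|_{\ell^1}+\|e(t)\|_{\ell^1}\lesssim B^{-1}$ by \eqref{tao2}, so the bound above for the nonlinear difference becomes $\lesssim B^{-4}\|e(t)\|_{\ell^1}$; inserting this into the Duhamel formula and using \eqref{tao3},
$$\|e(t)\|_{\ell^1}\ \le\ \Big\|\int_0^t\mathcal E(s)\,ds\Big\|_{\ell^1}+C\,B^{-4}\int_0^t\|e(s)\|_{\ell^1}\,ds\qquad\text{for }t\in I\ .$$
Gronwall's lemma then gives $\|e(t)\|_{\ell^1}\le\big\|\int_0^t\mathcal E(s)\,ds\big\|_{\ell^1}\,e^{C B^{-4}t}$, and since $T\ll B^4\log B$ — with the implied constant taken small enough in terms of $\sigma$ and $C$ — the amplification factor $e^{CB^{-4}t}$ is at most $B^{\sigma/2}$ for all $t\le T$. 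Feeding this back into \eqref{tao3} produces the bound $\|a(t)-g(t)\|_{\ell^1}\lesssim B^{-1-\sigma/2}$ (the final exponent $-1-\sigma/2$ coming out of the numerology of \eqref{tao2}, \eqref{tao3} and $T\ll B^4\log B$, exactly as in \cite{CKSTT}); in particular $\|e(t)\|_{\ell^1}\ll B^{-1}$ for $B$ large, which strictly improves the defining inequality of $I$ and forces $I=[0,T]$ (and, as noted, keeps $a(t)$ from blowing up before $T$). This is the desired estimate.

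The main obstacle is the bookkeeping of the continuity argument: one must check that the a priori hypothesis $\|e(t)\|_{\ell^1}\le B^{-1}$ is recovered with strict room to spare, which is what pins the admissible time window at $\sim B^4\log B$ — precisely because the prefactor $B^{-4}$ multiplying the Gronwall integral is the fourth power of the $\ell^1$–size $B^{-1}$, so that $B^{-4}\cdot T\sim\log B$ is borderline and $e^{CB^{-4}T}$ is only a small power of $B$. A secondary point is to verify that $\ell^1$ local well‑posedness genuinely propagates the NLS solution up to time $T$; this is automatic from the a priori bound $\|a(t)\|_{\ell^1}\ll 1$ on $[0,T]$ together with the blow‑up criterion furnished by \eqref{stimamultilin}.
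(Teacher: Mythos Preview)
Your overall scheme---bootstrap in $\ell^1$, multilinear bound \eqref{stimamultilin}, Gronwall---is exactly the paper's. The only cosmetic difference is that the paper first sets $F(t):=-i\int_0^t\mathcal E$ and $d:=g+F$, then estimates $e:=a-d$; you compare $a$ to $g$ directly. The two are equivalent once one notes $\|F\|_{\ell^1}$ is already below the target size.

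There is, however, a genuine slip in your numerology that you paper over with ``exactly as in \cite{CKSTT}''. From your Gronwall line and the bound $e^{CB^{-4}t}\le B^{\sigma/2}$ you get
\[
\|e(t)\|_{\ell^1}\;\le\;\Bigl(\sup_{s\le t}\Bigl\|\int_0^s\mathcal E\Bigr\|_{\ell^1}\Bigr)\,e^{CB^{-4}t}\;\lesssim\;B^{-1}\cdot B^{\sigma/2}\;=\;B^{-1+\sigma/2},
\]
not $B^{-1-\sigma/2}$. With this exponent the bootstrap hypothesis $\|e\|_{\ell^1}\le B^{-1}$ is \emph{not} recovered with room to spare, so the continuity argument does not close and the stated conclusion does not follow. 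The culprit is a typo in the statement of the lemma: hypothesis \eqref{tao3} should read $\lesssim B^{-1-\sigma}$, which is what the analogue in \cite{CKSTT} assumes and what the paper's own proof actually uses (it writes ``$F=O_{\ell^1}(B^{-1-\sigma})$ by hypothesis'' and reaches $\|e\|_1\le B^{-1-\sigma}e^{CB^{-4}t}$); the application in Section~\ref{all.together.now} verifies the much stronger bound \eqref{claim.tao3}, of order $B^{-5}$, so the corrected hypothesis is available. With \eqref{tao3} replaced by $\lesssim B^{-1-\sigma}$ your argument goes through verbatim and yields $\|a-g\|_{\ell^1}\lesssim B^{-1-\sigma}\cdot B^{\sigma/2}=B^{-1-\sigma/2}$, closing the bootstrap.
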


\begin{proof}
The proof is the transposition to the quintic case of the proof of Lemma 2.3 of \cite{CKSTT} and is postponed to Appendix \ref{lemmasublime}.
\end{proof}

Given $\delta$, $K$, construct $\SS$ as in Proposition \ref{prop.gen.set}. Note that we are free to specify $\mathcal R=\mathcal R(\delta,K)$  (which measures the inner radius of the frequencies involved in $\SS$) as large as we wish. This construction fixes $N= N(\delta,K)$ (the number of generations). We introduce a further parameter $\epsilon$ (which we are free to specify as a function of $\delta,K$)
and construct the slider  solution $b(t)$ to the toy model concentrated at scale $\epsilon$ according to Proposition \ref{prop.slider} above. This proposition also gives us a time $T_0=T_0(K,\delta)$. Note that the toy model has the following scaling
$$
b^{(\lambda)}(t):=\lambda^{-1}b\left(\frac{t}{\lambda^4}\right).
$$
We choose the initial data for NLS by setting
\begin{equation}\label{piffero}
a_j(0)=b_i^{(\lambda)}(0)\,,\qquad  \text{ for all} \quad j\in \SS_i
\end{equation}
and $a_j(t)=0$ when $j\notin \SS$. 
We want to apply the Approximation Lemma \ref{lemma.approx1} with a parameter $B$ chosen large enough so that
\begin{equation}\label{eq.b.lambda}
B^4\log B\gg\lambda^4T_0.
\end{equation}
We set $g(t)=\{g_j(t)\}_{j\in \Z^2}$ defined by the slider solution as
$$
g_j(t)= b_i^{(\lambda)}(t)\; \forall\; j\in \SS_i
$$
$g_j(t)=0$ otherwise.
Then  we set $\mathcal E(t):= \{\mathcal E_j(t)\}_{j\in \Z^2}$ with
$$
\mathcal E_j(t)= -\sum_{ k_i \in \SS: k_1+k_2+k_3-k_4-k_5=j \atop \omega_6\neq 0} g_{k_1}g_{k_2}g_{k_3}\bar g_{k_4}\bar g_{k_5} e^{i\omega_6 t}
$$
where $\omega_6 = |k_1|^2+|k_2|^2+|k_3|^2-|k_4|^2-|k_5|^2-|j|^2$. We recall that the frequency support of $g(t)$ is in $\SS$ for all times. We choose $B=C(N)\lambda$ and then show that for large enough $\lambda$ the required conditions \eqref{tao2}, \eqref{tao3} hold true. Observe that \eqref{eq.b.lambda} holds true with this choice for large enough $\lambda$. Note first that simply by considering its support, the fact that $|\SS|=C(N)$, and the fact that $\|b(t)\|_{\ell^{\infty}}\sim1$, we can be sure that $\|b(t)\|_{\ell^1(\Z)}\sim C(N)$ and therefore
\begin{equation}\label{falala}
\|b^{(\lambda)}(t)\|_{\ell^1(\Z)},\ \|g(t)\|_{\ell^1(\Z^2)}\leq\lambda^{-1}C(N).
\end{equation}
Thus, \eqref{tao2} holds with the choice $B=C(N)\lambda$. For the second condition \eqref{tao3}, we claim
\begin{equation}\label{claim.tao3}
\left\|\int_0^t\mathcal E(\tau) d\tau\right\|_{\ell^1}\lesssim C(N)(\lambda^{-5}+\lambda^{-9}T).
\end{equation}
This implies \eqref{tao3} since  $B = \lambda C(N)$ and $T = \lambda^4 T_0$. 

We now prove \eqref{claim.tao3}. Since $\omega_6$ does not vanish in the sum defining $\mathcal E$, we can replace $e^{i \omega_6 \tau}$  by $\frac{d}{d \tau}\left[\frac{e^{i \omega_6 \tau}}{i\omega_6}\right]$ and then integrate by parts. Three terms arise: the boundary terms at $\tau= 0, T$ and the integral term involving 
$$\frac{d}{d \tau}\left[g_{k_1}g_{k_2}g_{k_3}\bar g_{k_4}\bar g_{k_5}\right]. $$ For the boundary terms, we
use \eqref{falala} to obtain an upper bound of $C(N)\lambda^{-5}$ . For the integral term, the
$\tau$ derivative falls on one of the $g$ factors. We replace this differentiated term using the equation to get an expression that is 9-linear in $g$ and bounded by $C(N)\lambda^{-9}T$.
Once $\lambda$ has been chosen as above, we choose $\mathcal R$ sufficiently large so that the initial data $g(0) = a(0)$ has the right size:
\begin{equation}\label{tagliadelta}
\left(\sum_{j\in \SS}|g_j(0)|^2|j|^{2s}\right)^{\frac12}\sim \delta.
\end{equation}
This is possible since the quantity on the left scales like $\lambda^{-1}$  and $\mathcal R^s$ respectively  in the parameters $\lambda,\mathcal R$. The issue here is that our choice of frequencies $\SS$ only
gives us a large factor (that is $\frac K\delta$) by which the Sobolev norm of the solution  will grow. If our initial datum is much smaller than $\delta$ in size, the Sobolev norm of the solution will not grow to be larger than $K$.
It remains to show that we can guarantee
\begin{equation}
\left(\sum_{j\in\Z^2}|a_j(\lambda^4T_0)|^2|j|^{2s}\right)^{\frac12}\geq K,
\end{equation}
where $a(t)$ is the evolution of the initial datum $g(0)$ under the NLS.
We do this by first establishing  
\begin{equation}\label{ggrande}
\left(\sum_{j\in\SS}|g_j(\lambda^4T_0)|^2|j|^{2s}\right)^{\frac12}\gtrsim K,
\end{equation}
and then 
\begin{equation}\label{diffe}
\sum_{j\in\SS}|g_j(\lambda^4T_0)-a_j(\lambda^4T_0)|^2|j|^{2s}\lesssim 1.
\end{equation}
In order to prove \eqref{ggrande}, consider the ratio:
\begin{equation}\label{ggrande2}
\mathcal Q:= \frac{\sum_{j\in\SS}|g_j(\lambda^4T_0)|^2|j|^{2s}}{\sum_{j\in\SS}|g_j(0)|^2|j|^{2s}} =
\frac{\sum_{i=1}^N |b_i^{(\lambda)}(\lambda^4 T_0)|^2\sum_{j\in \SS_i}|j|^{2s}}{\sum_{i=1}^N |b_i^{(\lambda)}(0)|^2\sum_{j\in \SS_i}|j|^{2s}}.
\end{equation}
Set $\mathfrak J_i:=\sum_{j\in \SS_i}|j|^{2s}$, by construction $\frac{\mathfrak J_i}{\mathfrak J_{j}} \sim 2^{i-j}$ and by the choice of $N$ one has $\frac{\mathfrak J_{3}}{\mathfrak J_{N-2}}\lesssim \delta^2 K^{-2}$. Then one has
$$
\frac{\sum_{i=1}^N |b_i^{(\lambda)}(\lambda^4 T_0)|^2\mathfrak J_i}{\sum_{i=1}^N |b_i^{(\lambda)}(0)|^2\mathfrak J_i}\gtrsim\frac{\mathfrak J_{N-2}(1-\epsilon)}{\epsilon \sum_{i\neq 3}\mathfrak J_i +(1-\epsilon)\mathfrak J_{3}}=
$$
$$
=\frac{(1-\epsilon)}{\epsilon \sum_{i\neq 3}\frac{\mathfrak J_i}{\mathfrak J_{N-2}} +(1-\epsilon)\frac{\mathfrak J_{3}}{\mathfrak J_{N-2}}}= \frac{1}{\frac{\mathfrak J_{3}}{\mathfrak J_{N-2}}+O(\epsilon)}\gtrsim \frac{K^2}{\delta^2}.
$$
provided that $\epsilon= \epsilon(N,K,\delta)$ is sufficiently small.

In order to prove \eqref{diffe}, we use  the Approximation Lemma \ref{lemma.approx1} we obtain that
\begin{equation}\label{fottiti}
\sum_{j\in \SS} |g_j(\lambda^4 T_0)-a_j(\lambda^4 T_0) |^2|j|^{2s}\lesssim \lambda^{-2-\sigma}  \sum_{j\in \SS}|j|^{2s} \leq \frac12.
\end{equation} 
The last inequality is obtained by scaling $\lambda$ by some (big) parameter $C$ and  $\mathcal R$ by $C^{1/s}$ so that the bound \eqref{tagliadelta} still holds while $\lambda^{-2-\sigma}  \sum_{n\in \SS}|j|^{2s}$ scales as $C^{-\sigma}$.

\begin{appendix}
\section{Proof of Proposition \ref{covering}}\label{slider-sol}
This proof is in fact exactly the same as in \cite{CKSTT}, however in that paper all the results are stated for the cubic case (even though they are clearly more general) and so we give a schematic overview of the main steps.

\begin{lemma}\label{costanti}
Suppose that $[0,\tau]$ is a time interval on which
we have the smallness condition
$$
\int_0^\tau |c(s)|^2ds \lesssim 1
$$
then we have the estimates:
\begin{eqnarray*}
|c_{j\pm 1}^-(\tau)|&\lesssim& e^{-\sqrt{3}\tau}|c_{j\pm 1}^-(0)|+\int_0^\tau e^{-\sqrt{3}(\tau-s)} |c_{j\pm 1}^+(s)||c_{\neq j\pm 1}|^2\,, \\
|c_{j\pm 1}^+(\tau)|&\lesssim& e^{\sqrt{3}\tau}|c_{j\pm 1}^+(0)|+\int_0^\tau e^{\sqrt{3}(\tau-s)} |c_{j\pm 1}^-(s)||c_{\neq j\pm 1}|^2\,,  \\
|c_{j\pm 1}(\tau)|&\lesssim& e^{\sqrt{3}\tau}|c_{j\pm 1}(0)|\,, \\
|c^*(\tau)|&\lesssim& |c^*(0)|\,.
\end{eqnarray*}
\end{lemma}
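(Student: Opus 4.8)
The plan is to prove Lemma \ref{costanti} by a direct application of the integral (Duhamel) form of the equations \eqref{bruttone}, together with repeated use of Gr\"onwall's inequality, exactly as in Section 3 of \cite{CKSTT}. The key observation is that the linear part of \eqref{bruttone} is already diagonal: the pairs $(c^+_{j\pm1},c^-_{j\pm1})$ evolve with real exponents $\pm\sqrt3$, while the peripheral modes $c^*$ evolve with the purely imaginary exponent $i\kappa$. Hence writing each equation in Duhamel form, e.g.
$$
c^-_{j\pm1}(\tau)=e^{-\sqrt3\tau}c^-_{j\pm1}(0)+\int_0^\tau e^{-\sqrt3(\tau-s)}\Big(O(c^2c^-_{j\pm1})+O(c^2_{\neq j\pm1}c^+_{j\pm1})\Big)(s)\,ds\,,
$$
and similarly for the other components, is the starting point.

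First I would treat $c^*$: from $\dot c^*=i\kappa c^*+O(c^2c^*)$ one gets $|c^*(\tau)|\le|c^*(0)|+\int_0^\tau |c(s)|^2|c^*(s)|\,ds$ (the linear term contributes nothing in absolute value since $e^{i\kappa s}$ has modulus one), and the smallness hypothesis $\int_0^\tau|c(s)|^2ds\lesssim1$ lets Gr\"onwall close the bound $|c^*(\tau)|\lesssim|c^*(0)|$. Next, for $c_{j\pm1}$ itself I would use \eqref{cacca3}--\eqref{cacca4}: the right-hand side is $O(|c^+_{j\pm1}|+|c^-_{j\pm1}|)$ up to the bounded factor $(J-|c_{j\pm1}|^2)$ plus genuinely higher order terms, so $|c_{j\pm1}(\tau)|\lesssim e^{\sqrt3\tau}|c_{j\pm1}(0)|$ follows once one has control of $|c^+_{j\pm1}(\tau)|\lesssim e^{\sqrt3\tau}(\cdots)$ — i.e. this estimate is a consequence of the $c^+$ estimate. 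For the two remaining (coupled, hyperbolic) estimates on $c^\pm_{j\pm1}$, I would substitute the nonlinear terms' crude bound $O(c^2)$ times the relevant mode, extract the factors $e^{\mp\sqrt3 s}$ and $e^{\pm\sqrt3 s}$ respectively, and run a Gr\"onwall argument on the quantities $e^{\sqrt3 s}|c^-_{j\pm1}(s)|$ and $e^{-\sqrt3 s}|c^+_{j\pm1}(s)|$, again absorbing the contribution of $\int_0^\tau|c(s)|^2ds$. The precise form stated in the lemma keeps one inhomogeneous integral term explicitly (the cross-term $O(c^+_{j\pm1}|c_{\neq j\pm1}|^2)$ in the $c^-$ equation and vice versa) rather than absorbing it, because that term will be needed with its exact weight in the covering estimates later.

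The main obstacle — such as it is — is purely bookkeeping: one must be careful that the $O(\cdot)$ and $\OO(\cdot)$ schematic terms in \eqref{bruttone} and \eqref{bruttine} really do always carry at least one factor of the mode whose equation is being integrated (or of its hyperbolic partner), so that no term is genuinely of pure linear-exponential growth with the wrong sign, and that the extra degree-five pieces noted after \eqref{bruttine} are harmless under the bound $|c|^2\lesssim J=O(1)$ (they contribute the same $\int|c|^2$ after pulling out two more factors of $|c|\lesssim1$). Once this is checked the Gr\"onwall applications are routine; since the argument is verbatim that of \cite{CKSTT} with $\sqrt3$ in place of their hyperbolic exponent and with $\kappa$ in place of their elliptic one, I would simply point to the corresponding estimates there and indicate the (trivial) modifications, which is the style already adopted in this appendix.
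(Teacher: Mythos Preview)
Your proposal is correct and matches the paper's own approach: the paper's proof is the single sentence ``As in \cite{CKSTT} this Lemma follows from equations \eqref{bruttone} by Gronwall's inequality and the definition of $O(\cdot)$,'' and your Duhamel-plus-Gr\"onwall outline is precisely what this sentence is summarizing. Your additional remarks about the harmlessness of the degree-five terms under $|c|^2\lesssim J=O(1)$ and the bookkeeping of which factor each $O(\cdot)$ carries are exactly the verifications one must make, and are consistent with what the paper records in \eqref{bruttine}.
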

\begin{proof}
As in \cite{CKSTT} this Lemma follows from equations \eqref{bruttone} by Gronwall's inequality and the definition of $O(\cdot)$.
\end{proof}

We now prove that the incoming target covers the ricochet target. We start from some basic upper bounds on the flow.
\begin{proposition}\label{cazzomerda}
  Let $b(\tau)$ be a solution to toy model such that $b(0)$ is within $(M_j^-,d_j^-,R_j^-)$. Let $c(\tau)$ denote the coordinates of $b(\tau)$ as in \eqref{bruttone} Then, for all $0\le \tau\leq T$ we have the bounds:
\begin{equation}
\begin{array}{rcl} |c^*(\tau)| &= &O(T^{A_j^-}e^{-2\sqrt{3}T}) \\  |c^-_{j-1}(\tau)| &= &O(\sigma e^{-\sqrt{3}\tau}) \\ |c^+_{j-1}(\tau)| &= &O(T^{2A_j^-+1} e^{-4\sqrt{3}T+\sqrt{3}\tau})\\
|c^-_{j+1}(\tau)| &=& O(r_j^-(1+\tau) e^{-2\sqrt{3}T-\sqrt{3}\tau})\\
|c^+_{j+1}(\tau)| &=& O(r_j^- e^{-2\sqrt{3}T+\sqrt{3}\tau})
 \end{array}
 \end{equation}
  \end{proposition}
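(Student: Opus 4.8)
The plan is to bootstrap the bounds in Proposition \ref{cazzomerda} by feeding the structure of the incoming target $(M_j^-,d_j^-,R_j^-)$ into the estimates of Lemma \ref{costanti}. First I would use the hypothesis that $b(0)$ is within the target to extract explicit size information on the initial data: since $d_j^-(b(0),y)<R_j^-=T^{A_j^-}$ for some $y\in M_j^-$, and $y$ satisfies $c_{\leq j-2}=0$, $c^+_{j-1}=0$, $c^-_{j-1}=\sigma$, $|c_{\geq j+1}|\leq r_j^- e^{-2\sqrt 3 T}$, the weights $e^{2\sqrt 3 T}$, $e^{\sqrt 3 T}$, $e^{4\sqrt 3 T}$, $e^{3\sqrt 3 T}$ appearing in $d_j^-$ translate directly into the initial bounds $|c^*(0)|,|c_{\leq j-2}(0)|=O(T^{A_j^-}e^{-2\sqrt 3 T})$, $|c^-_{j-1}(0)|=\sigma+O(T^{A_j^-}e^{-\sqrt 3 T})=O(\sigma)$, $|c^+_{j-1}(0)|=O(T^{A_j^-}e^{-4\sqrt 3 T})$, and $|c_{\geq j+1}(0)|=O(r_j^- e^{-2\sqrt 3 T})$, using that $\sigma$ is a fixed small constant while $T$ is enormous.

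Next I would verify the smallness hypothesis $\int_0^\tau |c(s)|^2\,ds\lesssim 1$ needed to invoke Lemma \ref{costanti}, via a continuity/bootstrap argument: assume the claimed bounds hold on a maximal subinterval, check that they force $|c(\tau)|^2$ to be dominated by $O(\sigma^2 e^{-2\sqrt 3\tau})$ plus exponentially small corrections, whose time integral over $[0,T]$ is $O(\sigma^2)\ll 1$, and conclude the bootstrap closes. Granted this, the estimates follow by plugging the initial data into Lemma \ref{costanti} line by line. The $|c^*|$ bound is immediate from $|c^*(\tau)|\lesssim|c^*(0)|$. The $|c^-_{j-1}|$ bound comes from $|c^-_{j-1}(\tau)|\lesssim e^{-\sqrt 3\tau}|c^-_{j-1}(0)|+\int_0^\tau e^{-\sqrt 3(\tau-s)}|c^+_{j-1}(s)||c_{\neq j-1}|^2\,ds$, where the integral term is of lower order because $|c^+_{j-1}|$ and $|c_{\neq j-1}|^2$ are both tiny. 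The $|c^+_{j-1}|$ bound uses the growing exponential $e^{\sqrt 3\tau}$ acting on $|c^+_{j-1}(0)|=O(T^{A_j^-}e^{-4\sqrt 3 T})$ together with the forcing term $\int_0^\tau e^{\sqrt 3(\tau-s)}|c^-_{j-1}(s)||c_{\neq j-1}|^2\,ds$; here one carefully tracks that $|c^-_{j-1}(s)|=O(\sigma e^{-\sqrt 3 s})$ and $|c_{\neq j-1}(s)|^2=O(T^{2A_j^-}e^{-4\sqrt 3 T})$, so the integral contributes $O(T^{2A_j^-+1}e^{-4\sqrt 3 T+\sqrt 3\tau})$ after bounding $\int_0^\tau e^{\sqrt 3(\tau-2s)}\,ds \lesssim e^{\sqrt 3\tau}$, which is why the exponent $2A_j^-+1$ (rather than $A_j^-$) and the polynomial factor appear. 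Finally the $|c^\pm_{j+1}|$ bounds are obtained the same way using $|c_{j+1}(\tau)|\lesssim e^{\sqrt 3\tau}|c_{j+1}(0)|$ for the crude bound and the integral inequalities for the refined split into $c^+$ and $c^-$ parts, the extra factor $(1+\tau)$ in the $c^-_{j+1}$ bound coming from the convolution $\int_0^\tau e^{-\sqrt 3(\tau-s)}e^{-\sqrt 3 s}\cdot(\text{const})\,ds$ producing a linear-in-$\tau$ term.

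The main obstacle is the bookkeeping of the exponential weights: one must keep precise track of how the four distinct rates $e^{k\sqrt 3 T}$ in the semi-metric $d_j^-$ interact with the hyperbolic growth/decay rates $\pm\sqrt 3$ in equations \eqref{bruttone}, making sure that the forcing (nonlinear) terms never dominate the linear evolution on the time scale $[0,T]$ and that all the constants and polynomial-in-$T$ losses are absorbed into the $O(\cdot)$ with the stated exponents. Since $T$ is chosen last and as large as needed relative to $\sigma$, the $A$'s and the $r$'s, every term of the form $T^{a}e^{-b\sqrt 3 T}$ with $b>0$ is negligible against any $O(\sigma)$ quantity, which is the mechanism that makes the bootstrap close; this is exactly the argument carried out in Section 3 of \cite{CKSTT}, and the only modification needed here is the harmless presence of the extra quintic remainders noted after \eqref{bruttine}, which do not affect any of the Gronwall estimates.
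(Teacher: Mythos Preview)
Your proposal is correct and follows exactly the approach the paper indicates: the paper's own proof consists of the single sentence ``This is Proposition 3.2 of \cite{CKSTT}. The proof is an application of the continuity method and of Lemma \ref{costanti},'' and you have spelled out precisely that argument, reading off the initial data from the definition of $(M_j^-,d_j^-,R_j^-)$, closing the bootstrap via the $O(\sigma^2)$ time-integral of $|c|^2$, and feeding the resulting bounds into the integral inequalities of Lemma \ref{costanti}. The minor imprecision in your description of which integrand produces the extra $T$ in $T^{2A_j^-+1}$ (it is the $|c^+_{j+1}|^2$ contribution to $|c_{\neq j-1}|^2$, which carries the factor $e^{2\sqrt 3 s}$ and makes the integrand constant in $s$, yielding a factor $\tau\leq T$) does not affect the correctness of the outcome.
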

\begin{proof}
This is Proposition 3.2 of \cite{CKSTT}. The proof is an application of the continuity method and of Lemma \ref{costanti}.
\end{proof}
Now from these basic upper bounds  and from the equations of motion \eqref{bruttone}, \eqref{bruttine} we deduce improved upper bounds on the dynamical variables. We first consider $c_{j-1}^-$,
we have
$$
\dot c_{j-1}^-=-\sqrt{3}c_{j-1}^-+\OO((c_{j-1}^-)^3)+\OO((c_{j-1}^-)^5)+O(T^{A_j^-}e^{-2\sqrt{3}T})
$$
for some explicit expression $\OO((c_{j-1}^-)^3)+\OO((c_{j-1}^-)^5)$.
Now let $g$ be the solution to the corresponding equation
$$
\dot g= -\sqrt{3} g +\OO(g^3)+\OO(g^5)\,,
$$
with the same initial datum $g(0)=\sigma$.
One has the bound
\begin{equation}\label{gbound}
g(\tau)=O(\sigma e^{\sqrt{3}\tau})\,,
\end{equation}
which is formula (3.35) of \cite{CKSTT}. Then by estimating the {\em error term} $E_{j-1}^-:=c_{j-1}^--g$ one has
\begin{subequations}\label{pallegen}
\begin{align}
c_{j-1}^-(\tau) &= g(\tau)+O(T^{A_j^-+1}e^{-2\sqrt{3}T})\,\label{palle1}\\
\OO(c^2)&=\OO(g^2)+O(T^{A_j^-+1}e^{-2\sqrt{3}T})\label{palle2}\\
\OO(c^2_{\neq j+1})&=\OO(g^2)+O(T^{A_j^-+1}e^{-2\sqrt{3}T-\sqrt{3}\tau})\label{palle3}
\end{align}
\end{subequations}
which are respectively formul\ae (3.36)-(3.38) of \cite{CKSTT}.
Now we control the leading peripheral modes. Inserting \eqref{palle2} in \eqref{bruttone5} we see that
$$
\dot c_{\geq j+2}= i \kappa c_{\geq j+2}+ \OO(c_{\geq j+2} g^2)+ \OO(c_{\geq j+2} g^4)+O(T^{A_j^-}e^{-2\sqrt{3}T}|c_{\geq j+2}|)
$$
We approximate this by the corresponding linear equation
$$
\dot u= i \kappa u+ \OO(u g^2)+\OO(u g^4)
$$
where $u(\tau) \in \mathbb C^{N-j-1}$. This equation has a fundamental solution $G_{\geq j+2}(\tau) : \mathbb C^{N-j-1}\to \mathbb C^{N-j-1}$.  From \eqref{gbound} and Gronwall's inequality we have
\begin{equation}\label{gint}
\int_0^Tg^2(\tau)d\tau=O(1)
\end{equation}
and
\begin{equation}\label{funda}
|G_{\geq j+2}|,|G_{\geq j+2}^{-1}|=O(1)\ .
\end{equation}
Setting $c_{\geq j+2}(0)=e^{-2\sqrt{3}T}a_{\geq j+2}+O(T^{A_j^-}e^{-3\sqrt{3}T})$, we define
$$
E_{\geq j+2}:=c_{\geq j+2}-e^{-2\sqrt{3}T}G_{\geq j+2}a_{\geq j+2}\ .
$$
Applying the bound on $c_{\geq j+2}$ from Proposition \ref{cazzomerda} and Gronwall's inequality, we conclude
$$
|E_{\geq j+2}(\tau)|=O(T^{A_j^-}e^{-3\sqrt{3}T})
$$
for all $0\leq\tau\leq T$ and thus
\begin{equation}\label{cj2}
c_{\geq j+2}(\tau)=e^{-2\sqrt{3}T}G_{\geq j+2}(\tau)a_{\geq j+2}+O(T^{A_j^-}e^{-3\sqrt{3}T})\ .
\end{equation}
This is formula (3.41) of \cite{CKSTT}.


Now we consider the two leading secondary modes $c^+_{j+1},c^-_{j+1}$ simultaneously. From \eqref{bruttone}, \eqref{pallegen} and Proposition \ref{cazzomerda}, we have the system
$$
\begin{pmatrix} \dot c^-_{j+1}\\ \dot c^+_{j+1}
\end{pmatrix}=\sqrt{3} \begin{pmatrix} - c^-_{j+1}\\  c^+_{j+1}
\end{pmatrix} + M(\tau) \begin{pmatrix}  c^-_{j+1}\\  c^+_{j+1}
\end{pmatrix} +\begin{pmatrix} O(T^{A_j^-+1}e^{-4\sqrt{3}T})\\  O(T^{A_j^-+1}e^{-4\sqrt{3}T+\sqrt{3}\tau})
\end{pmatrix}
$$
Here $M(\tau)$ is a two by two matrix with entries all $\OO(g^2)+\OO(g^4)$.
Passing to the variables
$$
\tilde a_{j+1}(\tau):=\begin{pmatrix} \tilde a^-_{j+1}(\tau)\\  \tilde a^+_{j+1}(\tau)
\end{pmatrix}
$$
where
$$
\tilde a^-_{j+1}(\tau)=e^{2\sqrt{3}T+\sqrt{3}\tau}c^-_{j+1}(\tau)\,, \quad \tilde a^+_{j+1}(\tau)=e^{2\sqrt{3}T-\sqrt{3}\tau}c^+_{j+1}(\tau)\ ,
$$
we get the equation
\begin{equation}\label{laeva}
\left\{
\begin{array}{rl}
\partial_\tau\tilde a_{j+1}(\tau)&=A(\tau)\tilde a_{j+1}(\tau)+O(T^{A_j^-+1}e^{-2\sqrt{3}T+\sqrt{3}\tau})\\
\tilde a_{j+1}(0)&=a_{j+1}+O(T^{A_j^-}e^{-\sqrt{3}T})
\end{array}
\right.
\end{equation}
where $A(\tau)$ is some known matrix which (by \eqref{gbound}) has bounds
$$
A(\tau)=\sigma^2\begin{pmatrix} O(e^{-2\sqrt{3}\tau}) & O(1) \\ O(e^{-4\sqrt{3}\tau}) & O(e^{-2\sqrt{3}\tau})
\end{pmatrix}\ .
$$
We have obtained formula (3.42) of \cite{CKSTT}. Hence, following verbatim the proof given in \cite{CKSTT}, we get
\begin{equation}\label{cj1}
\begin{pmatrix}
e^{2\sqrt{3}T+\sqrt{3}\tau}c^-_{j+1} \\ e^{2\sqrt{3}T-\sqrt{3}\tau}c^+_{j+1}
\end{pmatrix}
=G_{j+1}(\tau)a_{j+1}+O(T^{A_j^-+2}e^{-\sqrt{3}T})
\end{equation}
which is formula (3.45) of \cite{CKSTT}.

Then, following Section 3.7 of \cite{CKSTT} verbatim, we deduce that the incoming target covers the ricochet target.

\smallskip

Then, one has to prove that the ricochet target covers the outgoing target. In order to do this, one should adapt sections 3.8-3.9 of \cite{CKSTT} exactly as we have done in the previous section. Since this is completely straightforward, we will not write it down.

\smallskip

The last step consists in proving that the outgoing target $(M_j^+,d_j^+,r_j^+)$ covers the next incoming target $(M_{j+1}^-,d_{j+1}^-,r_{j+1}^-)$. An initial datum in the outgoing target has the form
\begin{eqnarray*}
c_{\leq j-1}(0)&=&O(T^{A_j^+}e^{-2\sqrt{3}T})\\
c_{j+1}^-(0)&=&O(T^{A_j^+}e^{-4\sqrt{3}T})\\
c_{j+1}^+(0)&=&\sigma+O(T^{A_j^+}e^{-\sqrt{3}T})\\
c_{\geq j+2}(0)&=&e^{-2\sqrt{3}T}a_{\geq j+2}+O(T^{A_j^+}e^{-3\sqrt{3}T})
\end{eqnarray*}
for some $a_{\geq j+2}$ of magnitude at most $r_j^+$. From \eqref{bruttone5}, \eqref{cacca3}, \eqref{cacca4} we deduce
$$
\dot c_{\neq j+1}=O(|c_{\neq j+1}|)\ .
$$
Thus, for all $0\leq\tau\leq10\log\frac{1}{\sigma}$, Gronwall's inequality gives
\begin{equation}\label{pippaboh}
c_{\neq j+1}(\tau)=O\left(\frac{1}{\sigma^{O(1)}}T^{A_j^+}e^{-2\sqrt{3}T}\right)\ .
\end{equation}
The stable leading mode $c_{j+1}^-$ can be controlled by \eqref{bruttone3}, which by \eqref{pippaboh}
becomes
$$
\dot c_{j+1}^-=O(|c_{j+1}^-|)+O\left(\frac{1}{\sigma^{O(1)}}T^{2A_j^+}e^{-4\sqrt{3}T}\right)\ .
$$
By Gronwall's inequality we conclude
\begin{equation}\label{pippaboh2}
c_{j+1}^-(\tau)=O\left(\frac{1}{\sigma^{O(1)}}T^{2A_j^+}e^{-4\sqrt{3}T}\right)\ .
\end{equation}
Then, taking the $c_{j+1}^+$ component of \eqref{cacca1} we obtain, by \eqref{pippaboh} and \eqref{pippaboh2}
$$
\dot c_{j+1}^+=\sqrt{3}(1-|c_{j+1}^+|^2)c_{j+1}^++O\left(\frac{1}{\sigma^{O(1)}}T^{2A_j^+}e^{-4\sqrt{3}T}\right)\ .
$$
As in \cite{CKSTT}, we define $\hat g$ to be the solution to the ODE
\begin{equation}\label{hatg}
\partial_\tau\hat g=\sqrt{3}(1-|\hat g|^2)\hat g
\end{equation}
with initial datum $\hat g(0)=\sigma$. Such solution can be easily computed and is given by
$$
\hat g(\tau)=\frac{1}{\sqrt{1+e^{-2\sqrt{3}(\tau-\tau_0)}}}
$$
where $\tau_0$ is defined by
$$
\frac{1}{\sqrt{1+e^{2\sqrt{3}\tau_0}}}=\sigma\ .
$$
We note that
$$
\hat g(2\tau_0)=\frac{1}{\sqrt{1+e^{-2\sqrt{3}\tau_0}}}=\sqrt{1-\sigma^2}
$$
and that $2\tau_0\leq10\log\frac{1}{\sigma}$ if $\sigma$ is small enough. Then, estimating as in \cite{CKSTT} (via Gronwall's inequality) the error
$$
E_{j+1}^+:=c_{j+1}^+-\hat g\ ,
$$
we get
\begin{equation}\label{belloschifo}
c_{j+1}^+(\tau)=\hat g(\tau)+O\left(\frac{1}{\sigma^{O(1)}}T^{A_j^+}e^{-\sqrt{3}T}\right)\ .
\end{equation}
This (together with \eqref{pippaboh} and \eqref{pippaboh2}) implies
\begin{equation}\label{belloschifo2}
\OO(c^2)+\OO(c^4)=\OO(\hat g^2)+\OO(\hat g^4)+O\left(\frac{1}{\sigma^{O(1)}}T^{A_j^+}e^{-\sqrt{3}T}\right)\ .
\end{equation}
Now, from \eqref{bruttone3}, \eqref{pippaboh} and \eqref{belloschifo2}, we have
$$
\dot c_{\geq j+2}=i\kappa c_{\geq j+2}+\OO(\hat g^2 c_{\geq j+2})+\OO(\hat g^4 c_{\geq j+2})+O\left(\frac{1}{\sigma^{O(1)}}T^{2A_j^+}e^{-3\sqrt{3}T}\right)\ .
$$
We approximate this flow by the linear equation
$$
\dot u=i\kappa u+\OO(u\hat g^2)+\OO(u\hat g^4)
$$
where $u(\tau)\in\mathbb C^{N-j-1}$. This equation has a fundamental solution $\hat G_{\geq j+2}(\tau):\mathbb C^{N-j-1}\to\mathbb C^{N-j-1}$ for all $\tau\geq0$; from the boundedness of $\hat g$ and Gronwall's inequality we get
\begin{equation}\label{gigi}
|\hat G_{\geq j+2}(\tau)|,|\hat G_{\geq j+2}^{-1}(\tau)|\lesssim\frac{1}{\sigma^{O(1)}}\ .
\end{equation}
As in \cite{CKSTT}, a Gronwall estimate of the error
$$
E_{\geq j+2}(\tau):=c_{\geq j+2}(\tau)-e^{-2\sqrt{3}T}\hat G_{\geq j+2}(\tau)a_{\geq j+2}
$$
gives
\begin{equation}\label{argh}
c_{\geq j+2}(\tau)=e^{-2\sqrt{3}T}\hat G_{\geq j+2}(\tau)a_{\geq j+2}+O\left(\frac{1}{\sigma^{O(1)}}T^{2A_j^+}e^{-3\sqrt{3}T}\right)
\end{equation}
which is equation (3.62) of \cite{CKSTT}. Then, at the time $\tau=2\tau_0\leq10\log\frac{1}{\sigma}$, the estimates become
\begin{eqnarray*}
c_{\leq j-1}(2\tau_0)&=&O\left(\frac{1}{\sigma^{O(1)}}T^{A_j^+}e^{-2\sqrt{3}T}\right)\\
c_{j+1}^-(2\tau_0)&=&O\left(\frac{1}{\sigma^{O(1)}}T^{2A_j^+}e^{-4\sqrt{3}T}\right)\\
c_{j+1}^+(2\tau_0)&=&\sqrt{1-\sigma^2}+O\left(\frac{1}{\sigma^{O(1)}}T^{A_j^+}e^{-\sqrt{3}T}\right)\\
c_{\geq j+2}(2\tau_0)&=&e^{-2\sqrt{3}T}\hat G_{\geq j+2}(2\tau_0)a_{\geq j+2}+O\left(\frac{1}{\sigma^{O(1)}}T^{2A_j^+}e^{-3\sqrt{3}T}\right)\ .
\end{eqnarray*}
From this, we deduce
$$
|b_j|=\left(1-\sum_{k\neq j}|c_k|^2\right)^{\frac12}=\sigma+O\left(\frac{1}{\sigma^{O(1)}}T^{A_j^+}e^{-\sqrt{3}T}\right)\ .
$$
Moving back to the coordinates $b_1,\ldots,b_N$, this means that we have
\begin{eqnarray*}
b_{\leq j-1}(2\tau_0)&=&O\left(\frac{1}{\sigma^{O(1)}}T^{A_j^+}e^{-2\sqrt{3}T}\right)\\
b_j(2\tau_0)&=&\left[\sigma+\Re O\left(\frac{1}{\sigma^{O(1)}}T^{A_j^+}e^{-\sqrt{3}T}\right)\right]e^{i\vartheta^{(j)}(2\tau_0)}\\
b_{j+1}(2\tau_0)&=&\left[\sqrt{1-\sigma^2}+\Re O\left(\frac{1}{\sigma^{O(1)}}T^{A_j^+}e^{-\sqrt{3}T}\right)\right]\bar\omega e^{i\vartheta^{(j)}(2\tau_0)}+O\left(\frac{1}{\sigma^{O(1)}}T^{2A_j^+}e^{-4\sqrt{3}T}\right)\\
b_{\geq j+2}(2\tau_0)&=&e^{i\vartheta^{(j)}(2\tau_0)}e^{-2\sqrt{3}T}\hat G_{\geq j+2}(2\tau_0)a_{\geq j+2}+O\left(\frac{1}{\sigma^{O(1)}}T^{2A_j^+}e^{-3\sqrt{3}T}\right)\ .
\end{eqnarray*}
where the notation $f=\Re O(\cdot)$ means that both $f=O(\cdot)$ and $f\in\R$. We now have to recast this in terms of the variables $c_1^{(j+1)},\ldots,c_N^{(j+1)}$ in phase with $\T_{j+1}$. Following \cite{CKSTT}, we denote these variables by $\tilde c_1,\ldots,\tilde c_N$. We first note that
$$
\vartheta^{(j+1)}(2\tau_0)=\vartheta^{(j)}(2\tau_0)+\bar\omega+O\left(\frac{1}{\sigma^{O(1)}}T^{2A_j^+}e^{-4\sqrt{3}T}\right)\ .
$$
Then, we deduce our final estimates
\begin{eqnarray*}
\tilde c_{\leq j-1}(2\tau_0)&=&O\left(\frac{1}{\sigma^{O(1)}}T^{A_j^+}e^{-2\sqrt{3}T}\right)\\
\tilde c_j^-(2\tau_0)&=&\sigma+O\left(\frac{1}{\sigma^{O(1)}}T^{A_j^+}e^{-\sqrt{3}T}\right)\\
\tilde c_j^+(2\tau_0)&=&O\left(\frac{1}{\sigma^{O(1)}}T^{2A_j^+}e^{-4\sqrt{3}T}\right)\\
\tilde c_{\geq j+2}(2\tau_0)&=&\omega e^{-2\sqrt{3}T}\hat G_{\geq j+2}(2\tau_0)a_{\geq j+2}+O\left(\frac{1}{\sigma^{O(1)}}T^{2A_j^+}e^{-3\sqrt{3}T}\right)\ .
\end{eqnarray*}
This, together with \eqref{gigi}, shows that the outgoing target $(M_j^+,d_j^+,r_j^+)$ covers the next incoming target $(M_{j+1}^-,d_{j+1}^-,r_{j+1}^-)$ (it is enough to choose $a_{\geq j+2}$ appropriately).

\section{Proof of Lemma \ref{lemma.approx1}} 
\label{lemmasublime}

\begin{proof} 
First note that since $a(0) = g(0)$ is assumed to be compactly supported, the solution $a(t)$ to \eqref{NLS} exists globally in time, is smooth with respect to time, and is in $\ell^1(\Z^2)$ in space. 
Write
$$
F(t):= -i \int_0^t \mathcal E(\tau) d\tau, \quad \text{and}\quad d(t):= g(t) +F(t).
$$
Observe that
$$
- i \dot d = \mathcal N(d-F,d-F,d-F,d-F,d-F),
$$
Observe that $g= O_{\ell^1}(B^{-1})$ and $F=O_{\ell^1}(B^{-1-\sigma})$ by hypothesis. In particular we have $d=O_{\ell^1}(B^{-1})$. By multilinearity  and \eqref{stimamultilin} we thus have 
\begin{equation}\label{estasi}
-i \dot d = \mathcal N(d,d,d,d,d)+O_{\ell^1}(B^{-5-\sigma}).
\end{equation} 
Now write $e:= a-d$, recall that $a$ is the solution of the NLS. Then we have  
\begin{equation}\label{paradisiaca}
-i (\dot d+\dot e)= \mathcal N(d+e,d+e,d+e,d+e,d+e)\ .
\end{equation}
Subtracting \eqref{paradisiaca} from \eqref{estasi} (and using \eqref{stimamultilin}) we get
$$
i\dot e=O_{\ell^1}(B^{-5-\sigma})+O_{\ell^1}(B^{-4}\|e\|_1)+O_{\ell^1}(\|e\|_1^5),
$$
so taking the $\ell^1$ norm and differentiating in time we have:
$$
\frac{d}{dt} \| e\|_1\lesssim B^{-5-\sigma} +B^{-4}\|e\|_1+\|e\|_1^5.
$$
We make the bootstrap assumption  that $\| e\|_1 = O(B^{-1})$ for all $t \in  [0, T ]$, so that one can absorb the third term on the right hand side in the second. Gronwall's inequality then gives:
$$
\|e\|_1\leq B^{-1-\sigma}\mathrm{exp}(C B^{-4} t)
$$
for all $t\in [0,T]$. Since $T\ll B^{4}\log B $ we have $ \|e\|_1\ll B^{-1-\sigma/2}$  and the result follows by the bootstrap argument.
\end{proof}

The result of Lemma \ref{lemma.approx1} is that $g(t)$ is a good approximation of a solution to \eqref{NLS} on a time interval of approximate length $B^4\log B$, a factor $\log B$ larger than the interval $[0,B^4]$ for which the solution is controlled by a straightforward local-in-time argument. We choose the exponent $\sigma/2$ for concreteness, but it could be replaced by any exponent between 0 and $\sigma$.

\section{Two-generation sets without full energy transmission}\label{disegni}
We  describe the dynamics associated to the sets $\mathfrak  S^{(2)}$, $\mathfrak S^{(3)}$ given in the introduction. 

In  $\mathfrak  S^{(2)}$ we have  six complex variables $\beta_k$, $k\in \mathfrak S^{(2)}$ and correspondingly six constants of motion, so that the system is integrable. Passing to symplectic polar coordinates $\beta_k= \sqrt{J_k}e^{i \theta_k}$ we find that
 $J_{k_1}-J_{k_2}$, $J_{k_1}-J_{k_3}$  , $J_{k_4}-J_{k_5}$, $J_{k_4}-J_{k_6}$ are constant in time. Then one can study the dynamics reduced to the invariant subspace where all these constants are zero. We are left with four degrees of freedom denoted by $I_1,I_2,\theta_1,\theta_2$, and the Hamiltonian:
 $$
H= 31 (I_1+I_2)^3 -66 I_1I_2(I_1+I_2) + 24I_1^{3/2}I_2^{3/2}\cos(3(\theta_1-\theta_2)) 
 $$
 Then we reduce to the subspace\footnote{this subspace is invariant due to the conservation of mass} where $I_1+I_2=1$ and get the phase portrait of Figure \ref{vaffa}.
  \begin{figure}[!ht]
 \centering
 \begin{minipage}[b]{11cm}
 \centering
 {\psfrag{I}{$I_1$}
 \psfrag{a}[c]{$-\frac23\pi$}
 \psfrag{b}[c]{$\frac23\pi$}
 \psfrag{f}[r]{$\theta_1-\theta_2$}
 \includegraphics[width=11cm]{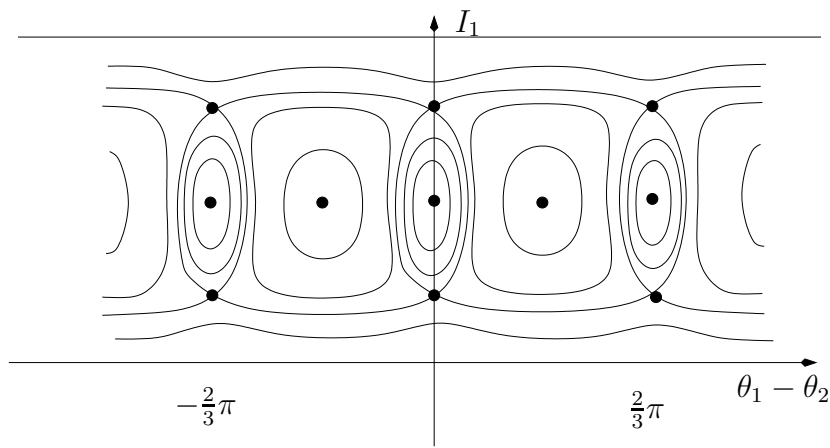}
 }
 \caption{\footnotesize{The phase portrait of $H$ on the subspace $I_1+I_2=1$, the dynamical variables are clearly $I_1,\theta_1-\theta_2$.}}\label{vaffa}
 \end{minipage}
 \end{figure}
  It is evident from the picture that there is no orbit connecting $I_1=0$ to $I_1=1$.  One could argue that this is due to our choice of invariant subspace. However, if we set for instance $J_{k_1}\neq J_{k_2}$ then we cannot transfer all the mass to $k_4,k_5,k_6$ since this would imply $J_{k_1}= J_{k_2}=J_{k_3}=0$.
  
  The case of $\mathfrak S^{(3)}$ is discussed in detail in \cite{GreTho}. Proceeding as above one gets the phase portrait of Figure \ref{vaffa2}.
  \begin{figure}[!ht]
   \centering
   \begin{minipage}[b]{11cm}
   \centering
   {\psfrag{I}{$I_1$}
   \psfrag{a}[c]{$-\frac\pi 2$}
   \psfrag{b}[c]{$\frac\pi 2$}
   \psfrag{f}[r]{$\theta_1-\theta_2$}
   \includegraphics[width=11cm]{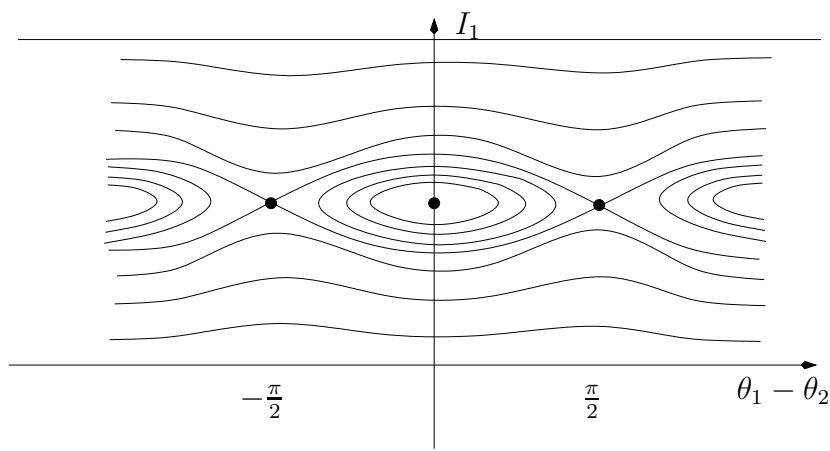}
   }
   \caption{\footnotesize{The phase portrait of $\mathfrak S^{(3)}$.}}\label{vaffa2}
   \end{minipage}
   \end{figure}
 One could generalize this approach by taking two complete and action preserving sets $\SS_1,\SS_2$ and connecting them with resonances as $\mathfrak S^{(2)}$ or $\mathfrak S^{(3)}$ as we have discussed in introduction for $\mathfrak S^{(1)}$.
However the dynamics is in fact qualitatively the same and one does not have full energy transfer. 

We have experimented also with higher order NLS equations. We have not performed a complete classification but it appears that the sets  $\mathfrak S^{(2)}$,  $\mathfrak S^{(3)}$ never give full energy transfer while $\mathfrak S^{(1)}$ does.

\end{appendix}

This research was supported by the European Research Council under
FP7.
\end{document}